\newcommand{\leqnomode}{\tagsleft@true}
\newcommand{\reqnomode}{\tagsleft@false}
\numberwithin{equation}{section}
\newtheorem{theorem}{Theorem}[section]
\newtheorem{lemma}[theorem]{Lemma}
\newtheorem{proposition}[theorem]{Proposition}
\newtheorem{remark}[theorem]{Remark}
\renewcommand{\rightarrow}{\to}
\providecommand{\ln}{\mathop{\rm ln}\nolimits}
\newenvironment{acknowledgement}{\noindent\textbf{Acknowledgements}\em}{}
\title[Ground states for a class of $(p,q)$-Laplacian coupled systems]{Positive ground states for a class of superlinear $(p,q)$-Laplacian coupled systems involving Schr\"{o}dinger equations}
\author[J.C. \ de Albuquerque]{J. C. de Albuquerque}
\author[J.M.\ do \'O]{Jo\~ao Marcos do \'O}
\author[Edcarlos D. Silva]{Edcarlos D. Silva}
\address[J.C. de~Albuquerque]{Department of Mathematics, Federal University of Goi\'{a}s}
\email{\href{mailto:joserre@gmail.com}{joserre@gmail.com}}
\address[J.M. do \'O]{Department of Mathematics,
	Federal University of Para\'{\i}ba
	\newline\indent
	58051-900, Jo\~ao Pessoa-PB, Brazil}
\email{\href{mailto:jmbo@pq.cnpq.br}{jmbo@pq.cnpq.br}}
\address[E.D. Silva]{Department of Mathematics,
	Federal University of Goi\'{a}s
	\newline\indent
	74001-970, Goi\'{a}s-GO, Brazil}
\email{\href{mailto:eddomingos@hotmail.com}{eddomingos@hotmail.com}}
\thanks{Corresponding author: jmbo@pq.cnpq.br}
\thanks{Research supported in part by INCTmat/MCT/Brazil, CNPq and CAPES/Brazil. The third author was also partially supported by Fapego Fapeg/CNpq grants 03/2015-PPP}
\subjclass{35J47, 35B09, 35J50, 35J92}
\keywords{ground states; coupled systems; superlinear problems; Nehari manifold}
\begin{document}
	

\begin{abstract}
	We study the existence of positive ground state solutions for the following class of  $(p,q)$-Laplacian coupled systems
	\[
	\left\{
	\begin{array}{lr}
	-\Delta_{p} u+a(x)|u|^{p-2}u=f(u)+ \alpha\lambda(x)|u|^{\alpha-2}u|v|^{\beta}, & x\in\mathbb{R}^{N},\\
	-\Delta_{q} v+b(x)|v|^{q-2}v=g(v)+ \beta\lambda(x)|v|^{\beta-2}v|u|^{\alpha}, & x\in\mathbb{R}^{N},
	\end{array}
	\right.
	\]
	where $N\geq3$ and $1< p\leq q<N$. Here the coefficient $\lambda(x)$ of the coupling term is related with the potentials by the condition $|\lambda(x)|\leq\delta a(x)^{\alpha/p}b(x)^{\beta/q}$ where $\delta\in(0,1)$ and $\alpha/p+\beta/q=1$. We deal with periodic and asymptotically periodic potentials. The nonlinear terms $f(s), \; g(s)$ are ``superlinear" at $0$ and at $\infty$ and are assumed without the well known Ambrosetti-Rabinowitz condition at infinity. Thus, we have established the existence of positive ground states solutions for a large class of nonlinear terms and potentials. Our approach is variational and based on minimization technique over the Nehari manifold.
	
\end{abstract}

\maketitle




\section{Introduction} 
\label{intro}

In this work we study the following class of $(p,q)$-Laplacian coupled systems
\begin{equation}\label{ej0}
\left\{
\begin{array}{lr}
-\Delta_{p} u+a(x)|u|^{p-2}u=f(u)+\alpha\lambda(x)|u|^{\alpha-2}u|v|^{\beta}, & x\in\mathbb{R}^{N},\\
-\Delta_{q} v+b(x)|v|^{q-2}v=g(v)+ \beta\lambda(x)|v|^{\beta-2}v|u|^{\alpha}, & x\in\mathbb{R}^{N},
\end{array}
\right.
\end{equation}
where $ 1<p\leq q<N$ and $N \geq 3$. We are concerned with the existence of \textit{ground state solutions}, that is, solutions with minimal energy among the energy of all nontrivial solutions. We study a general class of $(p,q)$-Laplacian coupled systems, when the potentials $a(x),b(x)$ are nonnegative, bounded and related with the coupling term by the following estimate $|\lambda(x)|\leq\delta a(x)^{\alpha/p}b(x)^{\beta/q}$, for some $\delta\in(0,1)$ and for all $x\in\mathbb{R}^{N}$ with $\alpha/p+\beta/q=1$ and $1 \leq \alpha < p,\; 1 \leq \beta < q $. Notice that this class of systems is a type of
``$(p,q)$-linearly coupled system" due the presence of the powers $\alpha$ and $\beta$ in the coupling terms. Another feature of this class of systems is the loss of homogeneity due the fact that we consider also the case $p \neq q$. We consider the case when these functions are periodic and asymptotically periodic, that is, the limits of $a(x),b(x)$ and $\lambda(x)$ are periodic functions when $|x|\rightarrow+\infty$ in a suitable sense. Latter on, we shall discuss the assumptions on the potentials $a(x), b(x)$ and $\lambda(x)$. The nonlinearities $f(s)$ and $g(s)$ are two continuous $(p,q)$-superlinear and subcritical functions which do not satisfy the Ambrosetti-–Rabinowitz condition at infinity. In fact, we suppose that $f(s)$ is $p$-superlinear and $g(s)$ is $q$-superlinear. Our main contribution here is to prove the existence of positive ground state solutions for a general class of $(p,q)$-coupled systems defined in the whole space $\mathbb{R}^{N}$ which include several particular classes of nonlinear Schr\"{o}dinger equations and linearly coupled systems.

\subsection{Motivation and Related Results }

In order to introduce the study of the class of $(p,q)$-Laplacian coupled systems \eqref{ej0}, we begin by giving a survey on the related problems which motivates the present work. When $\lambda=0$, $f\equiv g$, $a=b$ and $p=q$, System~\eqref{ej0} reduces to the following class of quasilinear Schr\"{o}dinger equations
\begin{equation}\label{ej2}
-\Delta_{p}u+a(x)u=f(u), \quad x\in\mathbb{R}^{N}.
\end{equation}
Equations involving the $p$-Laplacian operator arise in various branches of mathematical physics, such as non-Newtonian fluids, elastic mechanics, reaction-diffusion problems, flow through porous media, glaciology, petroleum extraction, nonlinear optics, plasma physics, nonlinear elasticity, etc. We refer to \cite{mawhin} and \cite{peral} for more details about the $p$-Laplacian and \cite{fisica22} for informations about applications involving this operator. When $p=2$, solutions of \eqref{ej2} are related with standing wave solutions of the nonlinear Schr\"{o}dinger equation
\begin{equation}\label{ej5}
i\hbar\frac{\partial\psi}{\partial t}=-\frac{\hbar^{2}}{2m}\Delta\psi+\tilde{a}(x)\psi-f(\psi), \quad x\in\mathbb{R}^{N}, \ t\geq0,
\end{equation}
where $i$ denotes the imaginary unit and $m,\hbar$ are positive constants. For \eqref{ej5}, a solution of the form $\psi(x,t)=e^{-\frac{iEt}{\hbar}}u(x)$ is called \textit{standing wave}. Assuming that $f(t\xi)=f(t)\xi$ for $\xi\in\mathbb{C}$, $|\xi|=1$, taking $\hbar=2m$ and denoting $a(x)=\tilde{a}(x)-E$, it is well known that $\psi$ is a solution of \eqref{ej5} if and only if $u$ solves equation \eqref{ej2}. For more information on the physical background, we refer the readers to \cite{fisica1,fisica2,fisica22,fisica3} and references therein.

The class of equations \eqref{ej2} has been extensively studied by many researchers. In order to overcome the difficulty originated from the lack of compactness, the authors introduced several classes of potentials. For instance, in \cite{rabinowitz}, P. Rabinowitz studied Schr\"{o}dinger equations when the potential is coercive and bounded away from zero. In order to improve the behavior of the potential introduced in \cite{rabinowitz}, T. Bartsch and Z.Q. Wang, \cite{bw}, considered a class of potentials such that the level sets $\{x\in\mathbb{R}^{N}:a(x)\leq M\}$ have finite Lebesgue measure for all $M>0$. Here we deal with two classes of nonnegative bounded potentials. For more results concerning nonlinear Schr\"{o}dinger equations we refer the readers to \cite{perera,alves,liu,cm,liu2,mana} and references therein.

Our goal in this paper is to prove the existence of positive ground state solutions for the general class of coupled systems \eqref{ej0}. In order to establish a variational approach to our problem, throughout all the paper we assume that
\begin{equation}\label{ej1}
\frac{\alpha}{p}+\frac{\beta}{q}=1 \quad \mbox{and} \quad \left\{\begin{array}{rrl}p<\alpha+\beta<q, & \mbox{if} & p<q,\\
\alpha+\beta=p=q, & \mbox{if} & p=q.
\end{array}
\right.
\end{equation}
The prototypical example when $p=q=2$ and $\alpha=\beta=1$ is the following linearly coupled system
\begin{equation}\label{ej11}
\left\{
\begin{array}{lr}
-\Delta u+a(x)u=f(u)+\lambda(x)v, & x\in\mathbb{R}^{N},\\
-\Delta v+b(x)v=g(v)+\lambda(x)u, & x\in\mathbb{R}^{N}.
\end{array}
\right.
\end{equation}
In \cite{jr1,jr2}, the authors studied the existence of positive ground states for \eqref{ej11} when $N=2$. For the case $N\geq2$ we refer the readers to \cite{ambrocolo,acr,czo1,czo2,dc,maia,maia2} and references therein. In \cite{vel3}, J. V\'{e}lin studied the existence of solutions for the following $(p,q)$-gradient elliptic system with boundary Dirichlet conditions
\[
\left\{
\begin{array}{cl}
-\Delta_{p}u =\gamma a(x)|u|^{p-2}u+f(x,u,v), & x\in\Omega,\\
-\Delta_{q}v =\delta b(x)|v|^{q-2}v+g(x,u,v), & x\in\Omega,\\
u=v=0, & x\in\partial\Omega.
\end{array}
\right.
\]
In \cite{li}, C. Li and C-L. Tang proved the existence of at least three weak solutions to the following class of quasilinear elliptic systems
\[
\left\{
\begin{array}{cl}
-\Delta_{p}u =\lambda F_{u}(x,u,v), & x\in\Omega,\\
-\Delta_{q}v =\lambda F_{v}(x,u,v), & x\in\Omega,\\
u=v=0, & x\in\partial\Omega.
\end{array}
\right.
\]
For more existence results concerning to $(p,q)$-Laplacian elliptic systems we refer the readers to \cite{boc,vel1,vel2,liuliu,grego} and references therein. We point out that in the most of these works, it was considered problems defined in bounded domains and it was obtained the existence of solution.

Motivated by the above discussion, we study the class of $(p,q)$-Laplacian coupled systems \eqref{ej0}. As we mentioned in \eqref{ej1}, we study the $(p,q)$-Laplacian system \eqref{ej0} when $p=q$ or $p\neq q$. This class of systems imposes some difficulties. The first one is the lack of compactness due to the fact that the system is defined in the whole Euclidean space $\mathbb{R}^{N}$. Moreover, System~\eqref{ej0} involves strongly coupled Schr\"{o}dinger equations because of the coupling terms in the right hand side. Another difficulty is that the nonlinear terms does not verify the well known Ambrosetti-Rabinowitz condition. Namely, it says that: There exists $\theta>2$ such that
\begin{equation}\label{ar}
0 < \theta F(t) =\theta\int_{0}^{t} f(\tau)\;\mathrm{d} \tau\leq t f(t), \quad \mbox{for all} \hspace{0,2cm} t \in \mathbb{R}. \tag{AR}
\end{equation}
The Ambrosetti-Rabinowitz condition plays an important role in studying the existence of solutions to elliptic equations of variational type. For instance, it is usually used to guarantee the boundedness of Palais-Smale or Cerami sequences. Instead \eqref{ar}, we suppose that $f$ is $p$-superlinear and $g$ is $q$-superlinear. In order to obtain ground states, we use a variational approach based on minimization technique over the Nehari manifold.

\subsection{Assumptions and main result}

Firstly, we are interested in to establish the existence of positive ground state solutions for the following class of linearly coupled systems involving quasilinear Schr\"{o}dinger equations
\begin{equation}\label{paper2ej0}
\left\{
\begin{array}{lr}
-\Delta_{p} u+a_{\mathrm{o}}(x)|u|^{p-2}u=f(u)+ \alpha\lambda_{\mathrm{o}}(x)|u|^{\alpha-2}u|v|^{\beta}, & x\in\mathbb{R}^{N},\\
-\Delta_{q} v+b_{\mathrm{o}}(x)|v|^{q-2}v=g(v)+ \beta\lambda_{\mathrm{o}}(x)|v|^{\beta-2}v|u|^{\alpha}, & x\in\mathbb{R}^{N},
\end{array}
\right. \tag{$S_{\mathrm{o}}$}
\end{equation}
where $N\geq3$, $1<p\leq q<N$ and $a_{\mathrm{o}}(x),b_{\mathrm{o}}(x),\lambda_{\mathrm{o}}(x)$ are periodic potentials.

For $s>1$, let $W^{1,s}(\mathbb{R}^{N})$ be the usual Sobolev space with the norm
\[
\|u\|_{W^{1,s}(\mathbb{R}^{N})}=\left(\int_{\mathbb{R}^{N}}|\nabla u|^{s}\;\mathrm{d} x+\int_{\mathbb{R}^{N}}|u|^{s}\;\mathrm{d} x\right)^{1/s}.
\]
In view of the presence of the potential $a_{\mathrm{o}}(x)$, we introduce the following space and norm
\[
E_{a_{\mathrm{o}},p}=\left\{u\in W^{1,p}(\mathbb{R}^{N}):\int_{\mathbb{R}^{N}}a_{\mathrm{o}}(x)|u|^{p}\;\mathrm{d} x<+\infty\right\}, \quad \|u\|_{a_{\mathrm{o}},p}^{p}=\int_{\mathbb{R}^{N}}(|\nabla u|^{p}+a_{\mathrm{o}}(x)|u|^{p})\;\mathrm{d} x.
\]
Analogously, in view of the presence of the potential $b_{\mathrm{o}}(x)$, we introduce
\[
E_{b_{\mathrm{o}},q}=\left\{v\in W^{1,q}(\mathbb{R}^{N}):\int_{\mathbb{R}^{N}}b_{\mathrm{o}}(x)|v|^{q}\;\mathrm{d} x<+\infty\right\}, \quad \|v\|_{b_{\mathrm{o}},q}^{q}=\int_{\mathbb{R}^{N}}(|\nabla v|^{q}+b_{\mathrm{o}}(x)|u|^{q})\;\mathrm{d} x.
\]
We set the product space $E_{\mathrm{o}}=E_{a_{\mathrm{o}},p}\times E_{b_{\mathrm{o}},q}$ which is a reflexive Banach space when endowed with the norm
$
\|(u,v)\|_{\mathrm{o}}=\|u\|_{a_{\mathrm{o}},p}+\|v\|_{b_{\mathrm{o}},q}.
$
In order to establish a variational approach to treat System~\eqref{paper2ej0}, we need to require suitable assumptions on the potentials. Throughout the paper, we assume that:
\begin{enumerate}[label=($V_{1}$),ref=$(V_{1})$]
	\item \label{v1}
	$a_{\mathrm{o}},b_{\mathrm{o}},\lambda_{\mathrm{o}}\in C(\mathbb{R}^{N})$ are $1$-periodic in each $x_{1},x_{2},...,x_{N}$.
\end{enumerate}
\begin{enumerate}[label=($V_{2}$),ref=$(V_{2})$]
	\item \label{v3}
	$a_{\mathrm{o}}(x),b_{\mathrm{o}}(x)\geq0$, for all $x\in\mathbb{R}^{N}$ and
	\[
	\inf_{u\in E_{a_{\mathrm{o}},p}}\left\{\int_{\mathbb{R}^{N}}|\nabla u|^{p}\;\mathrm{d} x+\int_{\mathbb{R}^{N}}a_{\mathrm{o}}(x)|u|^{p}\;\mathrm{d} x:\int_{\mathbb{R}^{N}}|u|^{p}\;\mathrm{d} x=1\right\}>0,
	\]
	\[
	\inf_{v\in E_{b_{\mathrm{o}},q}}\left\{\int_{\mathbb{R}^{N}}|\nabla v|^{q}\;\mathrm{d} x+\int_{\mathbb{R}^{N}}b_{\mathrm{o}}(x)|v|^{q}\;\mathrm{d} x:\int_{\mathbb{R}^{N}}|v|^{q}\;\mathrm{d} x=1\right\}>0.
	\]
\end{enumerate}
\begin{enumerate}[label=($V_{3}$),ref=$(V_{3})$]
	\item \label{v2}
	We assume $|\lambda_{\mathrm{o}}(x)|\leq\delta a_{\mathrm{o}}(x)^{\alpha/p}b_{\mathrm{o}}(x)^{\beta/q}$, for some $\delta\in(0,1)$ such that
	\[
	\frac{1}{q}-\delta\max\left\{\frac{\alpha}{p},\frac{\beta}{q}\right\}>0.
	\]
\end{enumerate}
\begin{enumerate}[label=($V_{3}'$),ref=$(V_{3}')$]
	\item \label{v3'}
	We suppose \ref{v2} holds and there exists $R>0$ such that $\lambda_{\mathrm{o}}(x)\geq\lambda_{0}>0$, for all $x\in B_{R}(0)$.
\end{enumerate}

\noindent In this work the main interest is to ensure existence of ground state by minimization on the Nehari manifold. For this purpose we assume that $\sup_{t > 0} f'(t)t/f(t)<+\infty, \sup_{t > 0} g'(t)t/g(t)<+\infty$. Furthermore, we make the following assumptions on the nonlinearities:
\begin{enumerate}[label=($F_1$),ref=$(F_1)$]
	\item \label{f1}
	$f,g\in C^{1}(\mathbb{R})$, $f(t)=o(|t|^{p-2}t)$, $g(t)=o(|t|^{q-2}t)$, as $|t|\rightarrow0$ and
	\[
	\lim_{|t|\rightarrow+\infty}\frac{f(t)}{|t|^{p-2}t}=\lim_{|t|\rightarrow+\infty}\frac{g(t)}{|t|^{q-2}t}=+\infty.
	\]
\end{enumerate}
\begin{enumerate}[label=($F_2$),ref=$(F_2)$]
	\item \label{f3}
	There exist $C_{1},C_{2}>0$, $r\in(p,p^{*})$ and $s\in(q,q^{*})$ such that
	\[
	 |f(t)|\leq C_{1}(1+|t|^{r-1}) \quad \mbox{and} \quad |g(t)|\leq C_{2}(1+|t|^{s-1}), \quad \mbox{for all} \hspace{0,2cm} t\in\mathbb{R}.
	\]
\end{enumerate}
\begin{enumerate}[label=($F_3$),ref=$(F_3)$]
	\item \label{f2}
	$t\mapsto\displaystyle\frac{f(t)}{|t|^{p-2}t}$ and $t\mapsto\displaystyle\frac{g(t)}{|t|^{q-2}t}$ are strictly increasing on $|t|\neq0$.
\end{enumerate}
\begin{enumerate}[label=($F_4$),ref=$(F_4)$]
	\item \label{f4}
	$F(t):=\int_{0}^{t}f(\tau)\;\mathrm{d}\tau\leq F(|t|)$ and $G(t):=\int_{0}^{t}g(\tau)\;\mathrm{d}\tau\leq G(|t|)$, for all $t\in\mathbb{R}$.
\end{enumerate}

Under these assumptions we shall consider the energy functional of $C^{1}$ class $I_{\mathrm{o}}$ given by
\begin{align*}
	I_{\mathrm{o}} (u,v)=\frac{1}{p}\|u\|_{a_{\mathrm{o}},p}^{p}+\frac{1}{q}\|v\|_{b_{\mathrm{o}},q}^{q}-\int_{\mathbb{R}^{N}}\left(F(u)+G(v)\right)\;\mathrm{d} x- \int_{\mathbb{R}^{N}}\lambda_{\mathrm{o}}(x)|u|^{\alpha}|v|^{\beta}\;\mathrm{d} x.
\end{align*}
From a standard point of view finding weak solutions to the elliptic problem \eqref{paper2ej0} is equivalent to find critical points for the energy functional $I_{\mathrm{o}}$. In order to get ground state solutions is usual to consider the Nehari method. The standard Nehari manifold for System \eqref{paper2ej0} is defined by
\begin{equation*}
	\mathcal{M}_{\mathrm{o}} = \left\{ (u, v) \in H^{1}(\mathbb{R}^{N}) \times H^{1}(\mathbb{R}^{N})\backslash \{(0,0)\}: \langle I'_{\mathrm{o}}(u,v), (u,v) \rangle = 0 \right\}.
\end{equation*}
In the present work we are interesting in to ensure existence of ground state solutions for the elliptic problem \eqref{paper2ej0} considering $1 < p \leq q < N$. When $p \neq q$ the principal part in the energy functional is not homogeneous. As a consequence the Nehari manifold $\mathcal{M}_{\mathrm{o}}$ is not suitable for our work. The main problem is to guarantee that any Palais-Smale sequence in $\mathcal{M}_{\mathrm{o}}$ is bounded. Another difficulty is to ensure that any nonzero pair $(u,v) \in H^{1}(\mathbb{R}^{N}) \times H^{1}(\mathbb{R}^{N})$ admits a unique projection in the standard Nehari manifold $\mathcal{M}_{\mathrm{o}}$. Furthermore, assuming that $p \neq q$, is not clear whether $\mathcal{M}_{\mathrm{o}}$ is a $C^{1}$ manifold which is crucial in our arguments. In order to overcome these difficulties we shall introduce the following Nehari manifold
\begin{equation*}
	\mathcal{N}_{\mathrm{o}} = \left\{ (u, v) \in H^{1}(\mathbb{R}^{N}) \times H^{1}(\mathbb{R}^{N})\backslash \{(0,0)\}: \left\langle I'_{\mathrm{o}}(u,v), \left(\dfrac{1}{p}u,\dfrac{1}{q}v\right) \right \rangle = 0 \right\}.
\end{equation*}
Here we mention that $\mathcal{N}_{\mathrm{o}}$ is a $C^{1}$ manifold and any Palais-Smale sequence over $\mathcal{N}_{\mathrm{o}}$ is bounded and away from zero, see Lemma \ref{principal} ahead. More precisely, we have that $I_{\mathrm{o}}$ is coercive over $\mathcal{N}_{\mathrm{o}}$. Related to the Nehari manifold we need to consider also the fibering maps which is a powerful tool in the Nehari method. One more time due the loss of homogeneity we introduce the fibering maps $t \rightarrow I_{\mathrm{o}}(t^{1/p}u, t^{1/q}v)$ which coincides with the usual fibering maps only in the case $p = q$. Thanks to the fibering maps we can prove that any nonzero pair $(u,v) \in H^{1}(\mathbb{R}^{N}) \times H^{1}(\mathbb{R}^{N})$ admits a unique projection in the Nehari manifold $\mathcal{N}_{\mathrm{o}}$, see Lemma~\ref{neh}.

It is important to stress that $\lambda : \mathbb{R}^{N} \rightarrow \mathbb{R}$ can be a sign changing continuous function. Therefore the coupled term is an indefinite nonlinear function. This allow us to consider many quasilinear elliptic systems where the coupled term is governed by the term $\lambda(x)|u|^{\alpha}|v|^{\beta}$. Hence is not clear whether the functional $I_{\mathrm{o}}$ has a minimizer over the Nehari manifold. Another difficulty here is to guarantee that any minimizer sequence in the Nehari manifold is bounded. In order to control the behavior of $\lambda(x)$ we consider hypothesis \ref{v2} ensuring that System \eqref{paper2ej0} admits a ground state solution taking into account the fact that the coupling term could be a sign changing function.

Now we are in position in order to state our main first result:
\begin{theorem}\label{A}
	If \ref{v1}-\ref{v2} and \ref{f1}-\ref{f4} hold, then there exists a ground state for System~\eqref{paper2ej0}. Moreover, we have the following conclusions:
	\begin{itemize}
		\item[(i)] Assume also that $\lambda_{\mathrm{0}}(x)\geq0$ for all $x\in\mathbb{R}^{N}$, then there exists a nonnegative ground state for System~\eqref{paper2ej0};
		\item[(ii)] Assume also that \ref{v3'} holds and $\lambda_{\mathrm{0}}(x)\geq0$ for all $x\in\mathbb{R}^{N}$, then there exists a positive ground state for System~\eqref{paper2ej0}, for some $\lambda_{0}>0$.
	\end{itemize}
\end{theorem}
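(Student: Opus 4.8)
The plan is to realize the ground state as a minimizer of $I_{\mathrm{o}}$ over the manifold $\mathcal{N}_{\mathrm{o}}$, exploiting the periodicity \ref{v1} to recover the compactness that is lost by working on $\mathbb{R}^{N}$, and then to upgrade the sign of the minimizer using \ref{f4} and \ref{v3'}. First I would set $c_{\mathrm{o}}:=\inf_{\mathcal{N}_{\mathrm{o}}}I_{\mathrm{o}}$ and record, from Lemma~\ref{principal}, that $I_{\mathrm{o}}$ is coercive and bounded below on $\mathcal{N}_{\mathrm{o}}$, that $\mathcal{N}_{\mathrm{o}}$ stays away from $(0,0)$, and hence that $c_{\mathrm{o}}>0$ by \ref{f1}--\ref{f3}. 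Since $\mathcal{N}_{\mathrm{o}}$ is a $C^{1}$ natural constraint — the derivative of the defining functional $J_{\mathrm{o}}(u,v)=\langle I'_{\mathrm{o}}(u,v),(\tfrac1p u,\tfrac1q v)\rangle$ in the direction $(\tfrac1p u,\tfrac1q v)$ is bounded away from zero on $\mathcal{N}_{\mathrm{o}}$ thanks to \ref{f2}--\ref{f3} — Ekeland's variational principle provides $(u_{n},v_{n})\in\mathcal{N}_{\mathrm{o}}$ with $I_{\mathrm{o}}(u_{n},v_{n})\to c_{\mathrm{o}}$ and $I'_{\mathrm{o}}(u_{n},v_{n})\to 0$ in $E_{\mathrm{o}}^{*}$, and by coercivity this sequence is bounded in $E_{\mathrm{o}}$.

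The heart of the argument, and the step I expect to be the main obstacle, is extracting a nontrivial critical point from $(u_{n},v_{n})$. I would invoke the Lions vanishing lemma: if $\sup_{y\in\mathbb{R}^{N}}\int_{B_{1}(y)}(|u_{n}|^{p}+|v_{n}|^{q})\,\mathrm{d}x\to 0$, then \ref{f1}--\ref{f3} and \ref{v2} force $\int F(u_{n})$, $\int G(v_{n})$ and $\int\lambda_{\mathrm{o}}|u_{n}|^{\alpha}|v_{n}|^{\beta}$ to $0$, so $\langle I'_{\mathrm{o}}(u_{n},v_{n}),(u_{n},v_{n})\rangle\to 0$ pushes $\|(u_{n},v_{n})\|_{\mathrm{o}}\to 0$, contradicting that $\mathcal{N}_{\mathrm{o}}$ is bounded away from the origin. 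Hence nonvanishing holds, and by \ref{v1} I may translate by $y_{n}\in\mathbb{Z}^{N}$ without changing $I_{\mathrm{o}}$ or $\mathcal{N}_{\mathrm{o}}$; the translated sequence converges weakly and a.e.\ to some $(u,v)\neq(0,0)$, which is a critical point of $I_{\mathrm{o}}$, hence lies on $\mathcal{N}_{\mathrm{o}}$ and satisfies $I_{\mathrm{o}}(u,v)\geq c_{\mathrm{o}}$. The subtle half is the reverse inequality: $I_{\mathrm{o}}$ is neither convex nor weakly lower semicontinuous and the coupling term is indefinite, so I would rewrite $I_{\mathrm{o}}$ restricted to $\mathcal{N}_{\mathrm{o}}$ as $I_{\mathrm{o}}-\langle I'_{\mathrm{o}}(\cdot),(\tfrac1p\cdot,\tfrac1q\cdot)\rangle$, i.e.\ as an integral whose integrand has a definite sign by \ref{f2}--\ref{f4} and the smallness condition \ref{v2}, and then apply Fatou's lemma to the translated sequence to obtain $I_{\mathrm{o}}(u,v)\leq\liminf I_{\mathrm{o}}(u_{n},v_{n})=c_{\mathrm{o}}$. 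Thus $(u,v)$ is a ground state.

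For conclusion (i), assuming $\lambda_{\mathrm{o}}\geq 0$, I would pass to $(|u|,|v|)$: the norms are unchanged, $F(u)\leq F(|u|)$, $G(v)\leq G(|v|)$ by \ref{f4}, and the coupling term is unaffected, so
\[
I_{\mathrm{o}}\big(t^{1/p}|u|,t^{1/q}|v|\big)\ \leq\ I_{\mathrm{o}}\big(t^{1/p}u,t^{1/q}v\big)\qquad\text{for all }t>0 .
\]
Projecting $(|u|,|v|)$ onto $\mathcal{N}_{\mathrm{o}}$ via the unique $t_{*}>0$ given by Lemma~\ref{neh}, and using that the fibering map $t\mapsto I_{\mathrm{o}}(t^{1/p}u,t^{1/q}v)$ is maximized at $t=1$ since $(u,v)\in\mathcal{N}_{\mathrm{o}}$, the chain
\[
c_{\mathrm{o}}\ \leq\ I_{\mathrm{o}}\big(t_{*}^{1/p}|u|,t_{*}^{1/q}|v|\big)\ \leq\ I_{\mathrm{o}}\big(t_{*}^{1/p}u,t_{*}^{1/q}v\big)\ \leq\ I_{\mathrm{o}}(u,v)\ =\ c_{\mathrm{o}}
\]
forces equality throughout, so $(t_{*}^{1/p}|u|,t_{*}^{1/q}|v|)$ is a nonnegative ground state, and being a minimizer on the natural constraint $\mathcal{N}_{\mathrm{o}}$ it solves \eqref{paper2ej0}.

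For conclusion (ii), with \ref{v3'} and $\lambda_{\mathrm{o}}\geq 0$, I would first show that the nonnegative ground state is fully nontrivial, i.e.\ $u\not\equiv 0$ and $v\not\equiv 0$: comparing $c_{\mathrm{o}}$ with the ground state energies of the decoupled scalar equations $-\Delta_{p}u+a_{\mathrm{o}}(x)|u|^{p-2}u=f(u)$ and $-\Delta_{q}v+b_{\mathrm{o}}(x)|v|^{q-2}v=g(v)$ and testing with a pair concentrated in $B_{R}(0)$ where $\lambda_{\mathrm{o}}\geq\lambda_{0}$, the coupling strictly lowers the energy below both scalar levels once $\lambda_{0}$ is large enough, which excludes a semitrivial minimizer. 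Then Moser iteration gives $u,v\in L^{\infty}_{\mathrm{loc}}(\mathbb{R}^{N})$ and the regularity theory for the $p$- and $q$-Laplacian gives $u,v\in C^{1,\gamma}_{\mathrm{loc}}(\mathbb{R}^{N})$; since each component is nonnegative, not identically zero and satisfies an inequality of the type $-\Delta_{p}u+a_{\mathrm{o}}(x)u^{p-1}\geq 0$ with locally bounded right-hand side, V\'azquez's strong maximum principle yields $u>0$ and $v>0$ on $\mathbb{R}^{N}$. The only genuinely delicate ingredient besides the compactness step is this strict energy comparison; the remaining pieces are by now routine for $(p,q)$-Laplacian problems.
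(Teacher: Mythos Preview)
Your proposal is correct and follows essentially the same architecture as the paper: minimize $I_{\mathrm{o}}$ over $\mathcal{N}_{\mathrm{o}}$, rule out vanishing via Lions' lemma, use the $\mathbb{Z}^{N}$-invariance from \ref{v1} to translate, recover the level by rewriting $I_{\mathrm{o}}=I_{\mathrm{o}}-\langle I'_{\mathrm{o}}(\cdot),(\tfrac1p\cdot,\tfrac1q\cdot)\rangle$ and applying Fatou, then handle (i) by projecting $(|u|,|v|)$ back onto $\mathcal{N}_{\mathrm{o}}$ via Lemma~\ref{neh} and (ii) by comparing $c_{\mathrm{o}}$ with the two scalar Nehari levels and invoking $C^{1,\gamma}$ regularity plus the strong maximum principle. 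The only presentational difference is that you explicitly pass through Ekeland's principle and the natural constraint property to obtain a free Palais--Smale sequence, whereas the paper works directly with a minimizing sequence on $\mathcal{N}_{\mathrm{o}}$ and absorbs that step into ``standard arguments''; your version is in fact cleaner on this point. One small correction: in the nonvanishing step the coupling integral $\int\lambda_{\mathrm{o}}|u_{n}|^{\alpha}|v_{n}|^{\beta}$ need not tend to zero under Lions vanishing (the H\"older exponents hit the endpoints $p$ and $q$); rather, it is absorbed into the norms via \ref{v2} exactly as in Lemma~\ref{nehari-1}, after which the argument proceeds as you describe.
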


We are also concerned with the existence of positive ground states for the following class of coupled systems
\begin{equation}\label{ej00}
\left\{
\begin{array}{lr}
-\Delta_{p} u+a(x)|u|^{p-2}u=f(u)+ \alpha\lambda(x)|u|^{\alpha-2}u|v|^{\beta}, & x\in\mathbb{R}^{N},\\
-\Delta_{q} v+b(x)|v|^{q-2}v=g(v)+ \beta\lambda(x)|v|^{\beta-2}v|u|^{\alpha}, & x\in\mathbb{R}^{N},
\end{array}
\right. \tag{$S$}
\end{equation}
where the potentials $a(x)$, $b(x)$ and $\lambda(x)$ are asymptotically periodic. Analogously to the periodic case, we introduce the following suitable spaces
\[
E_{a,p}=\left\{u\in W^{1,p}(\mathbb{R}^{N}):\int_{\mathbb{R}^{N}}a(x)|u|^{p}\;\mathrm{d} x<+\infty\right\}, \quad \|u\|_{a,p}^{p}=\int_{\mathbb{R}^{N}}(|\nabla u|^{p}+a(x)|u|^{p})\;\mathrm{d} x,
\]
\[
E_{b,q}=\left\{v\in W^{1,q}(\mathbb{R}^{N}):\int_{\mathbb{R}^{N}}b(x)|v|^{q}\;\mathrm{d} x<+\infty\right\}, \quad \|v\|_{b,q}^{q}=\int_{\mathbb{R}^{N}}(|\nabla v|^{q}+b(x)|u|^{q})\;\mathrm{d} x.
\]
We set the product space $E=E_{a,p}\times E_{b,q}$ endowed with the norm $\|(u,v)\|=\|u\|_{a,p}+\|v\|_{b,q}$. Moreover, we assume the following hypotheses:
\begin{enumerate}[label=($V_{4}$),ref=$(V_{4})$]
	\item \label{v4}
	$a(x)<a_{\mathrm{o}}(x)$, $b(x)<b_{\mathrm{o}}(x)$, $\lambda_{\mathrm{o}}(x)<\lambda(x)$, for all $x\in\mathbb{R}^{N}$ and
	\[
	\lim_{|x|\rightarrow+\infty}|a_{\mathrm{o}}(x)-a(x)|=\lim_{|x|\rightarrow+\infty}|b_{\mathrm{o}}(x)-b(x)|=\lim_{|x|\rightarrow+\infty}|\lambda(x)-\lambda_{\mathrm{o}}(x)|=0.
	\]
\end{enumerate}
\begin{enumerate}[label=($V_{5}$),ref=$(V_{5})$]
	\item \label{v5}
	$a(x),b(x)\geq0$, for all $x\in\mathbb{R}^{N}$ and
	\[
	\inf_{u\in E_{a,p}}\left\{\int_{\mathbb{R}^{N}}|\nabla u|^{p}\;\mathrm{d} x+\int_{\mathbb{R}^{N}}a(x)|u|^{p}\;\mathrm{d} x:\int_{\mathbb{R}^{N}}|u|^{p}\;\mathrm{d} x=1\right\}>0,
	\]
	\[
	\inf_{v\in E_{b,q}}\left\{\int_{\mathbb{R}^{N}}|\nabla v|^{q}\;\mathrm{d} x+\int_{\mathbb{R}^{N}}b(x)|v|^{q}\;\mathrm{d} x:\int_{\mathbb{R}^{N}}|v|^{q}\;\mathrm{d} x=1\right\}>0.
	\]
\end{enumerate}
\begin{enumerate}[label=($V_{6}$),ref=$(V_{6})$]
	\item \label{v6}
	We assume $|\lambda(x)|\leq\delta a(x)^{\alpha/p}b(x)^{\beta/q}$, for some $\delta\in(0,1)$, such that
	\[
	\frac{1}{q}-\delta\max\left\{\frac{\alpha}{p},\frac{\beta}{q}\right\}>0.
	\]
\end{enumerate}
\begin{enumerate}[label=($V_{6}'$),ref=$(V_{6}')$]
	\item \label{v6'}
	We suppose \ref{v6} holds and there exists $R>0$ such that $\lambda(x)\geq\lambda>0$, for all $x\in B_{R}(0)$.
\end{enumerate}
Under these assumptions we are able to state the following result:

\begin{theorem}\label{B}
	If \ref{v1}-\ref{v6} and \ref{f1}-\ref{f4} hold, then there exists a ground state for System~\eqref{ej00}. Moreover, we have the following conclusions:
	\begin{itemize}
		\item[(i)] Assume also that $\lambda(x)\geq0$ for all $x\in\mathbb{R}^{N}$, then there exists a nonnegative ground state for System~\eqref{ej00};
		\item[(ii)] Assume also that \ref{v6'} holds and $\lambda(x)\geq0$ for all $x\in\mathbb{R}^{N}$, then there exists a positive ground state for System~\eqref{ej00}, for some $\lambda>0$.
	\end{itemize}
\end{theorem}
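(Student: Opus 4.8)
The plan is to run, for the asymptotically periodic system $(S)$, the same Nehari-minimization machinery already set up for the periodic system $(S_{\mathrm o})$, the one genuinely new ingredient being a strict energy comparison between the two problems. First I would fix the variational framework: on $E=E_{a,p}\times E_{b,q}$ consider the $C^{1}$ energy functional $I$ associated with $(S)$, the modified Nehari manifold $\mathcal N=\{(u,v)\neq(0,0):\langle I'(u,v),(\frac1p u,\frac1q v)\rangle=0\}$, and the fibering maps $t\mapsto I(t^{1/p}u,t^{1/q}v)$. Since the proofs of Lemmas~\ref{principal} and \ref{neh} use only $(V_{5})$--$(V_{6})$ together with $(F_1)$--$(F_4)$ (the same structural role that $(V_{2})$--$(V_{3})$ play in the periodic case), all of their conclusions carry over verbatim: $\mathcal N$ is a $C^{1}$ manifold, $I$ is coercive and bounded below by a positive constant on $\mathcal N$, every nonzero pair has a unique projection onto $\mathcal N$, and $\mathcal N$ is a natural constraint. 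Set $c=\inf_{\mathcal N}I>0$ and $c_{\mathrm o}=\inf_{\mathcal N_{\mathrm o}}I_{\mathrm o}$.

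The key step is the strict inequality $c<c_{\mathrm o}$. By Theorem~\ref{A} there is a ground state $(u_{\mathrm o},v_{\mathrm o})\in\mathcal N_{\mathrm o}$ with $I_{\mathrm o}(u_{\mathrm o},v_{\mathrm o})=c_{\mathrm o}$; projecting it onto $\mathcal N$ produces $t_{0}>0$ with $(t_{0}^{1/p}u_{\mathrm o},t_{0}^{1/q}v_{\mathrm o})\in\mathcal N$. A direct computation gives $I(w)-I_{\mathrm o}(w)=\frac1p\int(a-a_{\mathrm o})|w_{1}|^{p}+\frac1q\int(b-b_{\mathrm o})|w_{2}|^{q}-\int(\lambda-\lambda_{\mathrm o})|w_{1}|^{\alpha}|w_{2}|^{\beta}$, and by $(V_{4})$ together with $|w_{1}|^{\alpha}|w_{2}|^{\beta}\ge0$ each term is $\le0$, strictly negative whenever $w\neq(0,0)$. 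Hence, using that $(u_{\mathrm o},v_{\mathrm o})$ maximizes its own periodic fibering map, $c\le I(t_{0}^{1/p}u_{\mathrm o},t_{0}^{1/q}v_{\mathrm o})<I_{\mathrm o}(t_{0}^{1/p}u_{\mathrm o},t_{0}^{1/q}v_{\mathrm o})\le\sup_{t>0}I_{\mathrm o}(t^{1/p}u_{\mathrm o},t^{1/q}v_{\mathrm o})=c_{\mathrm o}$.

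For compactness and existence, apply Ekeland's principle on the $C^{1}$ manifold $\mathcal N$ to get a minimizing sequence $(u_{n},v_{n})\in\mathcal N$ that is also a Palais--Smale sequence for $I$; by coercivity it is bounded and by Lemma~\ref{principal} bounded away from $0$, so up to a subsequence $(u_{n},v_{n})\rightharpoonup(u,v)$ in $E$ with $I'(u,v)=0$. The heart of the matter is to show $(u,v)\neq(0,0)$. If it were $(0,0)$, then either $(u_{n},v_{n})$ vanishes in Lions' sense --- and then the Nehari identity combined with $(F_1)$--$(F_2)$, $(V_{5})$--$(V_{6})$ and the absorption inequality $\int|\lambda||u|^{\alpha}|v|^{\beta}\le\delta\max\{\tfrac\alpha p,\tfrac\beta q\}(\|u\|_{a,p}^{p}+\|v\|_{b,q}^{q})$ forces $\|(u_{n},v_{n})\|\to0$, contradicting the lower bound --- or, after an integer translation $y_{n}$ with $|y_{n}|\to\infty$, $(u_{n}(\cdot+y_{n}),v_{n}(\cdot+y_{n}))\rightharpoonup(\tilde u,\tilde v)\neq(0,0)$, and using $(V_{4})$ with the periodicity of $a_{\mathrm o},b_{\mathrm o},\lambda_{\mathrm o}$ (so that $a(\cdot+y_{n})\to a_{\mathrm o}$ uniformly on compacta, etc.) one checks that $(\tilde u,\tilde v)$ is a nonzero critical point of $I_{\mathrm o}$, hence $I_{\mathrm o}(\tilde u,\tilde v)\ge c_{\mathrm o}$. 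On the other hand, on $\mathcal N$ the identity $I(u_{n},v_{n})-\langle I'(u_{n},v_{n}),(\frac1p u_{n},\frac1q v_{n})\rangle=\int[\frac1p f(u_{n})u_{n}-F(u_{n})]+\int[\frac1q g(v_{n})v_{n}-G(v_{n})]$ holds with nonnegative integrands (a consequence of $(F_3)$), so Fatou's lemma applied along the translated sequence yields $c\ge I_{\mathrm o}(\tilde u,\tilde v)\ge c_{\mathrm o}$, contradicting $c<c_{\mathrm o}$. Therefore $(u,v)\neq(0,0)$, so $(u,v)\in\mathcal N$, and the same Fatou argument without translation gives $I(u,v)\le\liminf I(u_{n},v_{n})=c\le I(u,v)$; thus $(u,v)$ is a ground state.

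For (i), if $\lambda\ge0$ then $(F_4)$ gives $I(t^{1/p}|u|,t^{1/q}|v|)\le I(t^{1/p}u,t^{1/q}v)$ for every $t>0$ (the coupling and gradient terms are unchanged), so the projection of $(|u|,|v|)$ onto $\mathcal N$ still realizes $c$ and is a nonnegative ground state; a bootstrap/regularity argument makes its components continuous. For (ii), under $(V_{6}')$ one first excludes trivial components: comparing $c$ with the ground-state levels of the two scalar equations $-\Delta_{p}u+a|u|^{p-2}u=f(u)$ and $-\Delta_{q}v+b|v|^{q-2}v=g(v)$ --- testing the Nehari projection against a fixed nonnegative bump supported in $B_{R}(0)$, where $\lambda$ is bounded below by a positive constant --- shows that this constant can be chosen large enough that $c$ lies strictly below both scalar levels, which forces $u\not\equiv0$ and $v\not\equiv0$; the strong maximum principle for the $p$- and $q$-Laplacian then gives $u,v>0$. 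I expect the main obstacle to be precisely the non-vanishing branch of the dichotomy: because the norm of $E$ is built from the non-periodic potentials $a,b$, translating the minimizing sequence does not preserve its norm, so one must use $(V_{4})$ carefully to show that the translated sequence is an approximate Palais--Smale sequence for $I_{\mathrm o}$ and that its weak limit is a genuine nonzero critical point of $I_{\mathrm o}$; matching this with the strict inequality $c<c_{\mathrm o}$ is what finally rules out loss of compactness.
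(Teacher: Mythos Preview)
Your proposal is correct and shares the paper's overall architecture: the Nehari manifold $\mathcal N$ for $I$, the fibering maps $t\mapsto I(t^{1/p}u,t^{1/q}v)$, the unique projection, coercivity/lower bound on $\mathcal N$, and the strict inequality $c<c_{\mathrm o}$ obtained exactly as you describe (projecting the periodic ground state from Theorem~\ref{A} and using $(V_4)$). The treatment of (i) and (ii) also matches the paper's Propositions~\ref{p4}, \ref{p7}, \ref{p5}.

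Where you genuinely diverge is in the proof that the weak limit $(u,v)$ is nontrivial. The paper does \emph{not} run a translation/concentration-compactness dichotomy. Instead, assuming $(u_0,v_0)=(0,0)$, it uses the local $L^p,L^q$ convergence to $0$ together with $(V_4)$ to show that $I_{\mathrm o}(u_n,v_n)=I(u_n,v_n)+o_n(1)$ and $\langle I_{\mathrm o}'(u_n,v_n),(\tfrac1p u_n,\tfrac1q v_n)\rangle=o_n(1)$; then it projects $(u_n,v_n)$ onto $\mathcal N_{\mathrm o}$ via $t_n>0$, proves $t_n\to1$ through two claims ($\limsup t_n\le1$ and eventually $t_n\ge1$, the second already giving $c_{\mathrm o}\le c$), and concludes $c_{\mathrm o}\le I_{\mathrm o}(t_n^{1/p}u_n,t_n^{1/q}v_n)=c+o_n(1)$, contradicting $c<c_{\mathrm o}$. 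Your route (Lions dichotomy; in the non-vanishing branch translate by $y_n\in\mathbb Z^N$, obtain a nonzero critical point of $I_{\mathrm o}$ via $(V_4)$, and use the translation-invariant Fatou argument on $\tfrac1p f(s)s-F(s)\ge0$ to get $c\ge c_{\mathrm o}$) reaches the same contradiction. The paper's projection argument has the advantage that it works with a mere minimizing sequence on $\mathcal N$ (only the Nehari constraint is used, no free Palais--Smale information), whereas your argument needs the Ekeland/natural-constraint step so that the translated limit is really a critical point of $I_{\mathrm o}$; on the other hand your approach is more in the concentration-compactness spirit and avoids the somewhat delicate analysis of the projection parameters $t_n$.
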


\begin{remark}
	The assumptions \ref{v3} and \ref{v5} imply that the spaces $E_{a_{\mathrm{o},p}}$, $E_{a,p}$ are continuous embedded into $L^{r}(\mathbb{R}^{N})$ for all $r\in[p,p^{*}]$ and the spaces $E_{b_{\mathrm{o},q}}$, $E_{b,q}$ are continuous embedded into $L^{s}(\mathbb{R}^{N})$ for all $s\in[q,q^{*}]$, see \cite[Lemma~2.1]{mana}.
\end{remark}

\begin{remark}
	Recall that the coercive case for the potentials $a(x), b(x)$ have been widely studied by many authors, see \cite{dc,rabinowitz} and references therein. More precisely, we mention that $a(x) \rightarrow +\infty$ and $b(x) \rightarrow + \infty$ as $|x| \rightarrow + \infty$ is said to be the coercive case. In this direction we observe that $E_{a,p}$ and $E_{b,q}$ are Banach spaces.  Furthermore, the embedding $E = E_{a,p} \times E_{b,q} \hookrightarrow L^{s_{1}}(\mathbb{R}^{N}) \times L^{s_{2}}(\mathbb{R}^{N})$ is compact for each $s_{1} \in [p, p^{*})$ and $s_{2} \in [ q, q^{*})$. Under these conditions our main theorems remain true due the compact embedding quoted just above. In fact, any hypothesis on the potentials $a(x)$ and $b(x)$ that ensures the compact embedding, implies that System \eqref{ej0} admits at least one ground state solution via minimization over the Nehari method. For example, we can consider also that for any $M>0$ the set
	$
	\left\{ x \in \mathbb{R}^{N} : a(x) \leq M, \, b(x) \leq M \right\}
	$
	has finite Lebesgue measure. Using this assumption we observe that the compact embedding listed just above holds true, see \cite{bw}.
\end{remark}

\begin{remark}
	Typical examples of nonlinearities satisfying \ref{f1}-\ref{f4} are given by $f(t)=|t|^{p-2}t\ln(1+|t|)$ and $g(t)=|t|^{q-2}t\ln(1+|t|)$. More generally, we can consider also $f(t) = |t|^{p-2}t \ln^{\gamma}(1 + |t|)$ and $g(t) = |t|^{q-2}t \ln^{\gamma}(1 + |t|)$ where $\gamma \geq 1$ is parameter and $p , q > 1$. In these examples the functions satisfy the assumptions \ref{f1}-\ref{f4}. However, these functions does not verify the Ambrosetti-Rabinowitz condition.
\end{remark}

\begin{remark} It is worthwhile to mention that our main results remain true for the following class of quasilinear elliptic systems
	\begin{equation}\label{ed0}
	\left\{
	\begin{array}{lr}
	-\Delta_{p} u+a(x)|u|^{p-2}u = f(u)+ c(x)H_{u}(u,v), & x\in\mathbb{R}^{N},\\
	-\Delta_{q} v+b(x)|v|^{q-2}v  =g(v)+ c(x) H_{v}(u,v), & x\in\mathbb{R}^{N},
	\end{array}
	\right.
	\end{equation}
	where $a(x),b(x)$ are periodic or asymptotically periodic continuous functions. Furthermore, we assume here that $a(x) \geq \ell$ and $b(x) \geq \ell$ for any $x \in \mathbb{R}^{N}$ with $\ell > 0$. Here we also assume that $c \in L^{\infty}(\mathbb{R}^{N})$ and $H: \mathbb{R} \times \mathbb{R} \rightarrow \mathbb{R}$ satisfies the following assumptions:
	\begin{itemize}
		\item[i)] The function $H$ is $C^{1}$ and satisfies a subcritical growth in the following sense
		\[
		|H_{u}(u,v)| \leq c_{1}(1 + |u|^{r_{1} -1} + |v|^{r_{2} -1} ), \quad \mbox{for all} \hspace{0,2cm} (u,v) \in \mathbb{R} \times \mathbb{R},
		\]
		\[
		|H_{v}(u,v)| \leq c_{2}(1 + |u|^{r_{1} -1} + |v|^{r_{2} -1} ), \quad \mbox{for all} \hspace{0,2cm} (u,v) \in \mathbb{R} \times \mathbb{R},
		\]
		for some constants $c_{1} , c_{2} > 0$ and $r_{1} \in (p, p^{*})$, $r_{2} \in (q, q^{*})$;
		\item[ii)] $H(t^{1/p}u, t^{1/q}v) = t H(u,v)$, for any $t \geq 0$ and for all $(u, v) \in \mathbb{R} \times \mathbb{R}$;
		\item[iii)] $|H(u,v)| \leq k (|u|^{p} + |v|^{q})$, for all $(u, v) \in \mathbb{R} \times \mathbb{R}$, where $k > 0$ is small enough.
	\end{itemize}
	The nonlinear terms $f$ and $g$ satisfy the same assumptions discussed in the main theorems.
	Typical examples for $H$ are $H(u,v) = |u|^{\alpha}|v|^{\beta}$ for $(u, v) \in \mathbb{R} \times \mathbb{R}$ where $1 \leq \alpha < p$ and $1 \leq \beta < q$. Here we mention that those more general assumptions over the coupling term can be handle, but for the sake of simplicity, we introduced a particular case given in System \eqref{ej0}.
	Using some minor modifications we can also consider the following elliptic problem
	\begin{equation*}\label{ed1}
	\left\{
	\begin{array}{lr}
	-\Delta_{p} u+a(x)|u|^{p-2}u = R_{u}(u,v) + c(x)H_{u}(u,v), & x\in\mathbb{R}^{N},\\
	-\Delta_{q} v+b(x)|v|^{q-2}v  = R_{v}(u,v)+ c(x) H_{v}(u,v), & x\in\mathbb{R}^{N},
	\end{array}
	\right.
	\end{equation*}
	where $R : \mathbb{R} \times \mathbb{R} \rightarrow \mathbb{R}$ is subcritical and belongs to  $C^{1}$ class. We also mention that in \cite{boc} was proved the existence of solutions for system of quasilinear elliptic equations involving $(p,q)$-Laplacian by studying critical points of the associated energy functional.
\end{remark}

\subsection{Notation}
Let us introduce the following notation:	
\begin{itemize}
	\item $C$, $\tilde{C}$, $C_{1}$, $C_{2}$,... denote positive constants (possibly different).
	\item $o_{n}(1)$ denotes a sequence which converges to $0$ as $n\rightarrow\infty$;
	\item The norm in $L^{s}(\mathbb{R}^{N})$ and $L^{\infty}(\mathbb{R}^{N})$, will be denoted respectively by $\|\cdot\|_{s}$ and $\|\cdot\|_{\infty}$.
	\item The norm in $L^{s}(\mathbb{R}^{N})\times L^{s}(\mathbb{R}^{N})$ is given by $\|(u,v)\|_{s}=\left(\|u\|^{s}_{s}+\|v\|^{s}_{s}\right)^{1/s}$.	
\end{itemize}

\subsection{Outline}
The remainder of this paper is organized as follows: In the forthcoming Section we introduce the variational framework to our problem. In Section \ref{s2} we obtain some preliminary results which will be used throughout the paper. In Section \ref{nehari} we introduce and give some properties of the Nehari manifold associated with the energy functional. In Section~\ref{s4} we use a minimization technique over the Nehari manifold in order to get a nontrivial ground state solution for System~\eqref{paper2ej0}. In this case, we make use of Lion's Lemma and the invariance of the energy functional to obtain the nontrivial critical point. After that, we use the known ground state to get another one which will be nonnegative. By using strong maximum principle we conclude that this ground state will be strictly positive. Finally, in Section~\ref{s5} we study the case when the potentials are asymptotically periodic. For this purpose, we establish a relation between the energy levels associated to Systems~\eqref{paper2ej0} and \eqref{ej00}.

\section{The variational framework}\label{s1}

Associated to System~\eqref{paper2ej0} we have the energy functional $I_{\mathrm{o}}:E_{\mathrm{o}} \rightarrow\mathbb{R}$ given by
\begin{align*}
I_{\mathrm{o}} (u,v)=\frac{1}{p}\|u\|_{a_{\mathrm{o}},p}^{p}+\frac{1}{q}\|v\|_{b_{\mathrm{o}},q}^{q}-\int_{\mathbb{R}^{N}}\left(F(u)+G(v)\right)\;\mathrm{d} x- \int_{\mathbb{R}^{N}}\lambda_{\mathrm{o}}(x)|u|^{\alpha}|v|^{\beta}\;\mathrm{d} x.
\end{align*}
It follows from assumptions \ref{f1} and \ref{f3} that for any $\varepsilon>0$ there is $C_{\varepsilon}>0$ such that
\begin{equation}\label{growth1}
|f(t)|\leq \varepsilon |t|^{p-1}+C_{\varepsilon}|t|^{r-1} \quad \mbox{and} \quad |g(t)|\leq \varepsilon |t|^{q-1}+C_{\varepsilon}|t|^{s-1}, \quad \mbox{for all} \hspace{0,2cm} t\in\mathbb{R},
\end{equation}
which implies that
\begin{equation}\label{growth}
|F(t)|\leq \varepsilon |t|^{p}+C_{\varepsilon}|t|^{r} \quad \mbox{and} \quad |g(t)|\leq \varepsilon |t|^{q}+C_{\varepsilon}|t|^{s}, \quad \mbox{for all} \hspace{0,2cm} t\in\mathbb{R}.
\end{equation}
Using \eqref{growth} one sees that $I_{\mathrm{o}}$ is well defined. Moreover, $I_{\mathrm{o}} \in C^1(E,\mathbb{R})$ and its differential is given~by
\begin{align}
\langle I_{\mathrm{o}}'(u,v),(\phi,\psi)\rangle  = \int_{\mathbb{R}^{N}}(|\nabla u|^{p-2}\nabla u\nabla\phi+a_{\mathrm{o}}(x)|u|^{p-2}u\phi+|\nabla v|^{q-2}\nabla v\nabla\psi+b_{\mathrm{o}}(x)|v|^{q-2}v\psi)\;\mathrm{d} x\nonumber\\
-\int_{\mathbb{R}^{N}}\left(f(u)\phi+g(v)\psi\right)\;\mathrm{d} x- \int_{\mathbb{R}^{N}}\lambda_{\mathrm{o}} (x)(\alpha|u|^{\alpha-2}u|v|^{\beta}\phi+\beta|u|^{\alpha}|v|^{\beta-2}v\psi)\;\mathrm{d} x.\nonumber
\end{align}
Hence, critical points of $I_{\mathrm{o}}$ are precisely the weak solutions of System~\eqref{paper2ej0}.

In order to treat System~\eqref{ej00} variationally, we introduce the $C^{1}$ energy functional $I:E\rightarrow\mathbb{R}$ defined by
\[
I(u,v)=\frac{1}{p}\|u\|_{a,p}^{p}+\frac{1}{q}\|v\|_{b,q}^{q}-\int_{\mathbb{R}^{N}}\left(F(u)+G(v)\right)\;\mathrm{d} x- \int_{\mathbb{R}^{N}}\lambda(x)|u|^{\alpha}|v|^{\beta}\;\mathrm{d} x,
\]
which its differential is given by
\begin{align}
\langle I'(u,v),(\phi,\psi)\rangle  = \int_{\mathbb{R}^{N}}(|\nabla u|^{p-2}\nabla u\nabla\phi+a(x)|u|^{p-2}u\phi+|\nabla v|^{q-2}\nabla v\nabla\psi+b(x)|v|^{q-2}v\psi)\;\mathrm{d} x\nonumber\\
-\int_{\mathbb{R}^{N}}\left(f(u)\phi+g(v)\psi\right)\;\mathrm{d} x- \int_{\mathbb{R}^{N}}\lambda (x)(\alpha|u|^{\alpha-2}u|v|^{\beta}\phi+\beta|u|^{\alpha}|v|^{\beta-2}v\psi)\;\mathrm{d} x.\nonumber
\end{align}
Under our assumptions the energy functional $I$ is well defined and the critical points correspond to solutions of System~\eqref{ej00}.

\section{Preliminary results}\label{s2}

\begin{lemma}\label{nehari-1}
	If \ref{v2} holds, then
	\begin{equation}\label{ej27}
	\int_{\mathbb{R}^{N}}\lambda_{\mathrm{o}}(x)|u|^{\alpha}|v|^{\beta}\;\mathrm{d} x\leq \delta\max\left\{\frac{\alpha}{p},\frac{\beta}{q}\right\}\left(\|u\|_{a_{\mathrm{o}},p}^{p}+\|v\|_{b_{\mathrm{o}},q}^{q}\right), \quad \mbox{for all} \hspace{0,2cm} (u,v)\in E_{\mathrm{o}}.
	\end{equation}
\end{lemma}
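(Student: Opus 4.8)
The plan is to estimate the coupling integral pointwise via Young's inequality and then integrate. First I would use hypothesis \ref{v2} together with the trivial bound
\[
\int_{\mathbb{R}^{N}}\lambda_{\mathrm{o}}(x)|u|^{\alpha}|v|^{\beta}\,\mathrm{d}x\le\int_{\mathbb{R}^{N}}|\lambda_{\mathrm{o}}(x)|\,|u|^{\alpha}|v|^{\beta}\,\mathrm{d}x\le\delta\int_{\mathbb{R}^{N}}\bigl(a_{\mathrm{o}}(x)|u|^{p}\bigr)^{\alpha/p}\bigl(b_{\mathrm{o}}(x)|v|^{q}\bigr)^{\beta/q}\,\mathrm{d}x,
\]
which reduces the problem to controlling the last integral.

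The key step is Young's inequality with the exponents $p/\alpha$ and $q/\beta$; these are conjugate precisely because $\alpha/p+\beta/q=1$ in \eqref{ej1} (and they exceed $1$ since $1\le\alpha<p$ and $1\le\beta<q$). Setting $X=a_{\mathrm{o}}(x)|u|^{p}\ge0$ and $Y=b_{\mathrm{o}}(x)|v|^{q}\ge0$, Young's inequality gives, for a.e.\ $x\in\mathbb{R}^{N}$,
\[
X^{\alpha/p}Y^{\beta/q}\le\frac{\alpha}{p}X+\frac{\beta}{q}Y=\frac{\alpha}{p}a_{\mathrm{o}}(x)|u|^{p}+\frac{\beta}{q}b_{\mathrm{o}}(x)|v|^{q}.
\]
Since $(u,v)\in E_{\mathrm{o}}$, the right-hand side is integrable, so this pointwise inequality can be integrated over $\mathbb{R}^{N}$; bounding both coefficients by $\max\{\alpha/p,\beta/q\}$ then yields
\[
\int_{\mathbb{R}^{N}}|\lambda_{\mathrm{o}}(x)|\,|u|^{\alpha}|v|^{\beta}\,\mathrm{d}x\le\delta\max\Bigl\{\frac{\alpha}{p},\frac{\beta}{q}\Bigr\}\int_{\mathbb{R}^{N}}\bigl(a_{\mathrm{o}}(x)|u|^{p}+b_{\mathrm{o}}(x)|v|^{q}\bigr)\,\mathrm{d}x.
\]

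To conclude I would only note that $\int_{\mathbb{R}^{N}}a_{\mathrm{o}}(x)|u|^{p}\,\mathrm{d}x\le\|u\|_{a_{\mathrm{o}},p}^{p}$ and $\int_{\mathbb{R}^{N}}b_{\mathrm{o}}(x)|v|^{q}\,\mathrm{d}x\le\|v\|_{b_{\mathrm{o}},q}^{q}$, directly from the definition of these norms (the gradient contributions being nonnegative), which gives \eqref{ej27}. There is no real obstacle in this lemma: the only point requiring care is the correct choice of conjugate exponents in Young's inequality, and the smallness restriction on $\delta$ in \ref{v2} is not needed here — it is reserved for the later coercivity and definiteness arguments on the Nehari manifold.
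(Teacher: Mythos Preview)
Your proof is correct and follows essentially the same approach as the paper: apply \ref{v2} to replace $\lambda_{\mathrm{o}}$ by $\delta a_{\mathrm{o}}^{\alpha/p}b_{\mathrm{o}}^{\beta/q}$, then use Young's inequality with conjugate exponents $p/\alpha$ and $q/\beta$ (from $\alpha/p+\beta/q=1$) to split the product, and bound the coefficients by the maximum. Your write-up is in fact slightly more detailed than the paper's, making explicit the pointwise form of Young's inequality and the final domination of the potential integrals by the full norms.
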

\begin{proof}
	In fact, it follows from assumption \ref{v2} that
	\[
	\int_{\mathbb{R}^{N}}\lambda_{\mathrm{o}}(x)|u|^{\alpha}|v|^{\beta}\;\mathrm{d} x\leq \delta\int_{\mathbb{R}^{N}}a_{\mathrm{o}}(x)^{\alpha/p}|u|^{\alpha}b_{\mathrm{o}}(x)^{\beta/q}|v|^{\beta}\;\mathrm{d} x.
	\]
	Since $\alpha/p+\beta/q=1$, we can use Young's inequality to conclude that
	\[
	\int_{\mathbb{R}^{N}}\lambda_{\mathrm{o}}(x)|u|^{\alpha}|v|^{\beta}\;\mathrm{d} x\leq \delta\max\left\{\frac{\alpha}{p},\frac{\beta}{q}\right\}\int_{\mathbb{R}^{N}}\left(a_{\mathrm{o}}(x)|u|^{p}+b_{\mathrm{o}}(x)|v|^{q}\right)\;\mathrm{d} x,
	\]
	which implies \eqref{ej27}.
\end{proof}

\begin{remark}
	We point out that \eqref{ej27} holds true for the asymptotically periodic case.
\end{remark}

\begin{lemma}\label{gs}
	If \ref{f2} holds, then the functions
	\begin{equation}\label{ej17}
	f(t)t-pF(t) \quad \mbox{and} \quad g(t)t-qG(t),
	\end{equation}
	are increasing for $|t|\neq0$. Furthermore, we have
	\begin{equation}\label{ej9}
	f'(t)t^{2}-(p-1)f(t)t>0 \quad \mbox{and} \quad g'(t)t^{2}-(q-1)g(t)t>0,
	\end{equation}
	for all $t\neq0$.
\end{lemma}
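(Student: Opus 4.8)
The plan is to reduce both assertions to the differential form of hypothesis~\ref{f2}. Since $f,g\in C^{1}(\mathbb{R})$ by \ref{f1}, for $t\neq0$ set $\Psi_{p}(t):=f(t)/(|t|^{p-2}t)$ and $\Psi_{q}(t):=g(t)/(|t|^{q-2}t)$; these are exactly the functions assumed strictly increasing in $|t|$ in \ref{f2}. First I would establish \eqref{ej9}. Differentiating $\Psi_{p}$ on each half-line and using $|t|^{p-2}t=\operatorname{sgn}(t)|t|^{p-1}$, a direct computation gives $\Psi_{p}'(t)=|t|^{-p}\big(f'(t)t-(p-1)f(t)\big)$ for every $t\neq0$. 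Monotonicity of $\Psi_{p}$ as a function of $|t|$ forces $\Psi_{p}'(t)>0$ for $t>0$ and $\Psi_{p}'(t)<0$ for $t<0$, so in either case multiplying through by $t$ (positive, resp.\ negative) yields $f'(t)t^{2}-(p-1)f(t)t>0$. The identical computation with $\Psi_{q}$ and exponent $q$ gives $g'(t)t^{2}-(q-1)g(t)t>0$, which is \eqref{ej9}.

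Next I would deduce the monotonicity statement \eqref{ej17}. Writing $h(t):=f(t)t-pF(t)$ and using $F'=f$, one has $h'(t)=f'(t)t-(p-1)f(t)=t^{-1}\big(f'(t)t^{2}-(p-1)f(t)t\big)$, whose bracket is positive by \eqref{ej9}; hence $h'(t)>0$ on $(0,\infty)$ and $h'(t)<0$ on $(-\infty,0)$, i.e.\ $|t|\mapsto h(t)$ is increasing, which is the intended meaning of ``increasing for $|t|\neq0$''. The same argument applied to $g(t)t-qG(t)$ finishes the proof.

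A slightly different, derivative-free route to \eqref{ej17} is also available and worth recording: for $0<s<t$ the strict monotonicity $f(\tau)/\tau^{p-1}<f(t)/t^{p-1}$ on $[s,t)$ integrates, after multiplying by $\tau^{p-1}>0$, to $p\int_{s}^{t}f(\tau)\,\mathrm{d}\tau<\dfrac{f(t)}{t^{p-1}}(t^{p}-s^{p})$, and rearranging gives $h(t)-h(s)>s^{p}\big(f(t)/t^{p-1}-f(s)/s^{p-1}\big)>0$; the case $t<s<0$ is symmetric, and likewise for $g$. This version uses only \ref{f2}, not the $C^{1}$ regularity.

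The computations are elementary one-variable calculus, so the only points requiring genuine care are bookkeeping ones: keeping track of the sign of $|t|^{p-2}t$ so that the $t<0$ branch produces the same inequality as the $t>0$ branch, and --- more substantively --- reading ``strictly increasing'' in \ref{f2} in the strong sense $\Psi_{p}'>0$ (equivalently, that the quotient has nonvanishing derivative) when passing to the strict inequality in \eqref{ej9}; under the purely monotone reading one obtains only ``$\ge$'' at possible isolated critical points of $\Psi_{p}$, so this strengthening of the hypothesis should be made explicit. No compactness or functional-analytic input enters here.
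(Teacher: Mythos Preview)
Your proof is correct and essentially matches the paper's argument. The paper orders the two parts oppositely---it first proves the monotonicity of $f(t)t-pF(t)$ by your ``derivative-free route'' (the integral comparison using $f(\tau)/\tau^{p-1}<f(t)/t^{p-1}$ on $(t_{1},t_{2})$) and then obtains \eqref{ej9} by differentiating $f(t)/t^{p-1}$ directly---whereas you establish \eqref{ej9} first and deduce \eqref{ej17} from it via $h'(t)=f'(t)t-(p-1)f(t)$; since you also record the integral argument, both routes appear in your write-up. Your remark that the strict inequality in \eqref{ej9} tacitly requires reading ``strictly increasing'' as ``derivative strictly positive'' is well taken: the paper makes the same implicit identification without comment.
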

\begin{proof}
	In fact, let $0<t_{1}<t_{2}$ be fixed. Thus, by using \ref{f2} we deduce that
	\begin{equation}\label{gs1}
	f(t_{1})t_{1}-pF(t_{1}) < \frac{f(t_{2})}{t_{2}^{p-1}}t_{1}^{p}-pF(t_{2})+p\int_{t_{1}}^{t_{2}}f(\tau)\;\mathrm{d}\tau.
	\end{equation}
	Moreover, we have
	\begin{equation}\label{gs2}
	p\int_{t_{1}}^{t_{2}}f(\tau)\;\mathrm{d}\tau<p\frac{f(t_{2})}{t_{2}^{p-1}}\int_{t_{1}}^{t_{2}}\tau^{p-1}\;\mathrm{d}\tau=\frac{f(t_{2})}{t_{2}^{p-1}}(t_{2}^{p}-t_{1}^{p}).
	\end{equation}
	Combining \eqref{gs1} and \eqref{gs2} we conclude that
	\[
	f(t_{1})t_{1}-pF(t_{1}) < f(t_{2})t_{2}-pF(t_{2}).
	\]
	The same argument can be used to get the result when $t<0$ and for the function $g(t)t-qG(t)$.
	
	Now, we note from \ref{f2} that for $t\in(0,+\infty)$ we have
	\[
	0<\frac{d}{dt}\left(\frac{f(t)}{t^{p-1}}\right)=\frac{f'(t)t^{p-1}-(p-1)f(t)t^{p-2}}{t^{2(p-1)}},
	\]
	\vspace{-0,2cm}
	\[
	0<\frac{d}{dt}\left(\frac{g(t)}{t^{q-1}}\right)=\frac{g'(t)t^{q-1}-(q-1)g(t)t^{q-2}}{t^{2(q-1)}},
	\]
	which implies \eqref{ej9}. Analogously we get the result when $t\in(-\infty,0)$.
\end{proof}

\begin{remark}
	It is important to mention that in view of the preceding Lemma, the functions $f(t)t-pF(t)$ and $g(t)t-qG(t)$ are nonnegative for all $t\in\mathbb{R}$.
\end{remark}

\section{The Nehari manifold}\label{nehari}

Let $\mathcal{N}_{0}$ be the Nehari manifold associated to System~\eqref{paper2ej0} defined by
\[
\mathcal{N}_{\mathrm{o}}:=\left\{(u,v)\in E_{\mathrm{o}}\backslash\{(0,0)\}:\left\langle I_{\mathrm{o}}'(u,v),\left(\frac{1}{p}u,\frac{1}{q}v\right)\right\rangle=0 \right\}.
\]
Hence, $(u,v)\in\mathcal{N}_{\mathrm{o}}$ if and only if satisfies
\begin{equation}\label{ej3}
\frac{1}{p}\|u\|_{a_{\mathrm{o}},p}^{p}+\frac{1}{q}\|v\|_{b_{\mathrm{o}},q}^{q}- \int_{\mathbb{R}^{N}}\lambda_{\mathrm{o}}(x)|u|^{\alpha}|v|^{\beta}\;\mathrm{d} x=\frac{1}{p}\int_{\mathbb{R}^{N}}f(u)u\;\mathrm{d} x+\frac{1}{q}\int_{\mathbb{R}^{N}}g(v)v\;\mathrm{d} x.
\end{equation}

\begin{lemma}\label{principal}
	If \ref{f1}-\ref{f3} hold, then we have the following facts:
	\begin{itemize}
		\item[(i)] $\mathcal{N}_{\mathrm{o}}$ is a $C^{1}$-manifold;
		\item[(ii)] There exists $\gamma>0$ such that $\|(u,v)\|_{\mathrm{o}}\geq\gamma$, for all $(u,v)\in\mathcal{N}_{\mathrm{o}}$.
	\end{itemize}
\end{lemma}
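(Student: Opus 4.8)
The plan is to realise $\mathcal N_{\mathrm{o}}$ as a regular level set of a $C^{1}$ constraint functional. Set
\[
\Phi_{\mathrm{o}}(u,v):=\Big\langle I_{\mathrm{o}}'(u,v),\Big(\tfrac1p u,\tfrac1q v\Big)\Big\rangle=\frac1p\|u\|_{a_{\mathrm{o}},p}^{p}+\frac1q\|v\|_{b_{\mathrm{o}},q}^{q}-\int_{\mathbb R^{N}}\lambda_{\mathrm{o}}(x)|u|^{\alpha}|v|^{\beta}\,\mathrm dx-\frac1p\int_{\mathbb R^{N}}f(u)u\,\mathrm dx-\frac1q\int_{\mathbb R^{N}}g(v)v\,\mathrm dx ,
\]
so that $\mathcal N_{\mathrm{o}}=\Phi_{\mathrm{o}}^{-1}(0)\cap(E_{\mathrm{o}}\setminus\{(0,0)\})$ and \eqref{ej3} is exactly the equation $\Phi_{\mathrm{o}}(u,v)=0$. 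First I would check $\Phi_{\mathrm{o}}\in C^{1}(E_{\mathrm{o}},\mathbb R)$: by \ref{f1}, \ref{f3} and the standing bounds $\sup_{t>0}f'(t)t/f(t)<\infty$, $\sup_{t>0}g'(t)t/g(t)<\infty$, the maps $t\mapsto f'(t)t^{2}+f(t)t$ and $t\mapsto g'(t)t^{2}+g(t)t$ obey the same kind of subcritical growth as $f(t)t$ and $g(t)t$, so the Nemytskii-operator arguments that give $I_{\mathrm{o}}\in C^{1}$ apply equally to $\Phi_{\mathrm{o}}$. To obtain (i) it then remains to verify that $\Phi_{\mathrm{o}}'(u,v)\neq0$ for every $(u,v)\in\mathcal N_{\mathrm{o}}$.

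For this I would compute the derivative of $\Phi_{\mathrm{o}}$ and test it against $(\tfrac1p u,\tfrac1q v)$. Using $\alpha/p+\beta/q=1$ the coupling term contributes exactly $-\int_{\mathbb R^{N}}\lambda_{\mathrm{o}}(x)|u|^{\alpha}|v|^{\beta}\,\mathrm dx$, so after eliminating the quadratic-form part by means of the membership relation \eqref{ej3} one is left with
\[
\Big\langle\Phi_{\mathrm{o}}'(u,v),\Big(\tfrac1p u,\tfrac1q v\Big)\Big\rangle=\frac1{p^{2}}\int_{\mathbb R^{N}}\big[(p-1)f(u)u-f'(u)u^{2}\big]\,\mathrm dx+\frac1{q^{2}}\int_{\mathbb R^{N}}\big[(q-1)g(v)v-g'(v)v^{2}\big]\,\mathrm dx .
\]
By \eqref{ej9} in Lemma~\ref{gs} both integrands are nonpositive and vanish only where $u=0$, respectively $v=0$; since $(u,v)\neq(0,0)$, at least one of $u,v$ is nontrivial, so the right-hand side is strictly negative. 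In particular $\Phi_{\mathrm{o}}'(u,v)\neq0$, hence $0$ is a regular value of $\Phi_{\mathrm{o}}$ on $E_{\mathrm{o}}\setminus\{(0,0)\}$ and $\mathcal N_{\mathrm{o}}$ is a $C^{1}$-manifold of codimension one, which proves (i).

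For (ii), write $A=\|u\|_{a_{\mathrm{o}},p}$ and $B=\|v\|_{b_{\mathrm{o}},q}$ for $(u,v)\in\mathcal N_{\mathrm{o}}$. Since $p\le q$, the left-hand side of \eqref{ej3} is at least $\tfrac1q(A^{p}+B^{q})$, while Lemma~\ref{nehari-1} bounds $\int_{\mathbb R^{N}}\lambda_{\mathrm{o}}(x)|u|^{\alpha}|v|^{\beta}\,\mathrm dx$ from above by $\delta\max\{\tfrac\alpha p,\tfrac\beta q\}(A^{p}+B^{q})$; by \ref{v2} this produces a constant $c_{0}=\tfrac1q-\delta\max\{\tfrac\alpha p,\tfrac\beta q\}>0$ with
\[
c_{0}(A^{p}+B^{q})\le\frac1p\int_{\mathbb R^{N}}f(u)u\,\mathrm dx+\frac1q\int_{\mathbb R^{N}}g(v)v\,\mathrm dx .
\]
Estimating the right-hand side with \eqref{growth1} and the continuous embeddings $E_{a_{\mathrm{o}},p}\hookrightarrow L^{p}\cap L^{r}$, $E_{b_{\mathrm{o}},q}\hookrightarrow L^{q}\cap L^{s}$ (the Remark following Theorem~\ref{B}), and choosing $\varepsilon$ small enough to absorb the $L^{p}$ and $L^{q}$ terms into $c_{0}(A^{p}+B^{q})$, one gets $c_{1}(A^{p}+B^{q})\le c_{2}(A^{r}+B^{s})$ for positive constants $c_{1},c_{2}$. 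Since $A^{r}+B^{s}\le(A^{p}+B^{q})\max\{A^{r-p},B^{s-q}\}$ and $A^{p}+B^{q}>0$, this forces $\max\{A^{r-p},B^{s-q}\}\ge c_{1}/c_{2}$, and because $r>p$ and $s>q$ one concludes $\|(u,v)\|_{\mathrm{o}}=A+B\ge\gamma$ for an explicit $\gamma>0$.

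The embeddings and the absorption argument in (ii) are routine; the step that genuinely needs care is the first half of (i), namely checking honestly that $\Phi_{\mathrm{o}}$ is $C^{1}$ — this is precisely where the hypotheses $\sup_{t>0}f'(t)t/f(t)<\infty$ and $\sup_{t>0}g'(t)t/g(t)<\infty$ are used — together with making sure the sign computation above really closes, i.e.\ that \eqref{ej9} upgrades ``nonpositive'' to ``strictly negative'' using only $(u,v)\neq(0,0)$ rather than $u\neq0$ and $v\neq0$ separately.
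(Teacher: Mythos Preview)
Your proposal is correct and follows essentially the same route as the paper: define the constraint functional $\Phi_{\mathrm{o}}$, test $\Phi_{\mathrm{o}}'$ against $(\tfrac1p u,\tfrac1q v)$, use \eqref{ej3} to cancel the principal part and \eqref{ej9} to obtain strict negativity for (i); then combine Lemma~\ref{nehari-1} with \eqref{growth1} and absorb the $\varepsilon$-terms for (ii). Your write-up is in fact slightly more explicit than the paper's (you spell out why $\Phi_{\mathrm{o}}\in C^{1}$ via the hypotheses $\sup_{t>0}f'(t)t/f(t)<\infty$, $\sup_{t>0}g'(t)t/g(t)<\infty$, and you close the final algebraic step in (ii) cleanly), but the argument is the same.
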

\begin{proof}
	Let $\varphi:E_{\mathrm{o}}\backslash\{(0,0)\}\rightarrow\mathbb{R}$ be defined by $\varphi(u,v)=\langle I_{\mathrm{o}}'(u,v),((1/p)u,(1/q)v)\rangle$. It is no hard to verify that $\varphi$ is in $C^1$ class. Recall also that $\mathcal{N}_{\mathrm{o}}=\varphi^{-1}(0)$. It is suffices to ensure that $0$ is a regular value for the function $\varphi$. Using \eqref{ej9} and \eqref{ej3} we can deduce that
	\[
	\left\langle\varphi'(u,v),\left(\frac{1}{p}u,\frac{1}{q}v\right)\right\rangle\leq -\frac{1}{p^{2}}\int_{\mathbb{R}^{N}}(f'(u)u^{2}-(p-1)f(u)u)-\frac{1}{q^{2}}\int_{\mathbb{R}^{N}}(g'(v)v^{2}-(q-1)g(v)v)<0,
	\]
	which implies that $0$ is a regular value of $\varphi$. Therefore, $\mathcal{N}_{\mathrm{o}}$ is a $C^{1}$-manifold.
	
	In order to prove $(ii)$, we note by Lemma~\ref{nehari-1} that
	\[
	\left(\frac{1}{q}-\delta \max\left\{\frac{\alpha}{p},\frac{\beta}{q}\right\}\right)(\|u\|_{a_{\mathrm{o}},p}^{p}+\|v\|_{b_{\mathrm{o}},q}^{q})\leq \frac{1}{p}\|u\|_{a_{\mathrm{o}},p}^{p}+\frac{1}{q}\|v\|_{b_{\mathrm{o}},q}^{q}- \int_{\mathbb{R}^{N}}\lambda_{\mathrm{o}}(x)|u|^{\alpha}|v|^{\beta}\;\mathrm{d} x.
	\]
	Hence, by using \eqref{growth1} and \eqref{ej3} we can deduce that
	\begin{equation}\label{ej6}
	\left(\frac{1}{q}-\delta \max\left\{\frac{\alpha}{p},\frac{\beta}{q}\right\}\right)(\|u\|_{a_{\mathrm{o}},p}^{p}+\|v\|_{b_{\mathrm{o}},q}^{q})\leq \varepsilon(\|u\|_{a_{\mathrm{o}},p}^{p}+\|v\|_{b_{\mathrm{o}},q}^{q})+\tilde{C_{\varepsilon}}(\|u\|_{a_{\mathrm{o}},p}^{r}+\|v\|_{b_{\mathrm{o}},q}^{s}).
	\end{equation}
	Taking $\varepsilon>0$ sufficiently small such that
	\[
	\left(\frac{1}{q}-\delta \max\left\{\frac{\alpha}{p},\frac{\beta}{q}\right\}-\varepsilon\right)>0,
	\]
	we conclude by \eqref{ej6} that
	\[
	0<\frac{1}{\tilde{C_{\varepsilon}}}\left(\frac{1}{q}-\delta \max\left\{\frac{\alpha}{p},\frac{\beta}{q}\right\}-\varepsilon\right)\leq \|u\|_{a_{\mathrm{o}},p}^{r-p}+\|v\|_{b_{\mathrm{o}},q}^{s-q},
	\]
	which implies $(ii)$.
\end{proof}

\begin{lemma}\label{neh}
	For any $(u,v)\in E_{\mathrm{o}}\backslash\{(0,0)\}$ there exists a unique $t_{0}>0$, depending on $(u,v)$, such that
	\[
	(t_{0}^{1/p}u,t_{0}^{1/q}v)\in\mathcal{N}_{\mathrm{o}} \quad \mbox{and} \quad I_{\mathrm{o}}(t_{0}^{1/p}u,t_{0}^{1/q}v)=\max_{t\geq0}I_{\mathrm{o}}(t^{1/p}u,t^{1/q}v).
	\]
\end{lemma}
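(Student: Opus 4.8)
The plan is to analyse the fibering map attached to the non-homogeneous scaling $t\mapsto(t^{1/p}u,t^{1/q}v)$. Fix $(u,v)\in E_{\mathrm{o}}\setminus\{(0,0)\}$ and set $\gamma(t):=I_{\mathrm{o}}(t^{1/p}u,t^{1/q}v)$ for $t\ge0$. Since $\|t^{1/p}u\|_{a_{\mathrm{o}},p}^{p}=t\|u\|_{a_{\mathrm{o}},p}^{p}$, $\|t^{1/q}v\|_{b_{\mathrm{o}},q}^{q}=t\|v\|_{b_{\mathrm{o}},q}^{q}$ and, by \eqref{ej1}, $|t^{1/p}u|^{\alpha}|t^{1/q}v|^{\beta}=t^{\alpha/p+\beta/q}|u|^{\alpha}|v|^{\beta}=t\,|u|^{\alpha}|v|^{\beta}$, we obtain
\[
\gamma(t)=t\,C_{u,v}-\int_{\mathbb{R}^{N}}\bigl(F(t^{1/p}u)+G(t^{1/q}v)\bigr)\;\mathrm{d}x,\qquad
C_{u,v}:=\frac{1}{p}\|u\|_{a_{\mathrm{o}},p}^{p}+\frac{1}{q}\|v\|_{b_{\mathrm{o}},q}^{q}-\int_{\mathbb{R}^{N}}\lambda_{\mathrm{o}}(x)|u|^{\alpha}|v|^{\beta}\;\mathrm{d}x .
\]
From Lemma~\ref{nehari-1}, $p\le q$ and \ref{v2} one gets $C_{u,v}\ge\bigl(\tfrac{1}{q}-\delta\max\{\alpha/p,\beta/q\}\bigr)\bigl(\|u\|_{a_{\mathrm{o}},p}^{p}+\|v\|_{b_{\mathrm{o}},q}^{q}\bigr)>0$. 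Moreover $\gamma$ is continuous on $[0,\infty)$ with $\gamma(0)=0$, it is $C^{1}$ on $(0,\infty)$, and a direct computation with the formula for $I_{\mathrm{o}}'$ gives $t\gamma'(t)=\bigl\langle I_{\mathrm{o}}'(t^{1/p}u,t^{1/q}v),(\tfrac1p t^{1/p}u,\tfrac1q t^{1/q}v)\bigr\rangle$; hence, for $t>0$, one has $\gamma'(t)=0$ if and only if $(t^{1/p}u,t^{1/q}v)\in\mathcal{N}_{\mathrm{o}}$. It therefore suffices to show that $\gamma'$ vanishes at exactly one point $t_{0}>0$ and that $\gamma$ attains its global maximum there.

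I would write $\gamma'(t)=C_{u,v}-\Psi(t)$, where
\[
\Psi(t):=\int_{\mathbb{R}^{N}}\left(\frac{1}{p}\,\frac{f(t^{1/p}u)}{|t^{1/p}u|^{p-2}t^{1/p}u}\,|u|^{p}+\frac{1}{q}\,\frac{g(t^{1/q}v)}{|t^{1/q}v|^{q-2}t^{1/q}v}\,|v|^{q}\right)\mathrm{d}x
\]
(with the integrand understood as $0$ where $u$, respectively $v$, vanishes), and establish four facts about $\Psi$ on $(0,\infty)$. First, $\Psi(t)\to0$ as $t\to0^{+}$: the integrand tends to $0$ pointwise by \ref{f1}, and for $0<t\le1$ it is dominated, via \eqref{growth1} together with $r>p$, $s>q$, by a fixed $L^{1}$ function built from $|u|^{p},|u|^{r},|v|^{q},|v|^{s}$ (integrable by the Sobolev embeddings of $E_{a_{\mathrm{o}},p}$ and $E_{b_{\mathrm{o}},q}$), so dominated convergence applies. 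Second, $\Psi(t)\to+\infty$ as $t\to+\infty$: since $(u,v)\ne(0,0)$, say $u\not\equiv0$, the set $\Omega_{0}:=\{x:|u(x)|\ge\eta\}$ has positive measure for some $\eta>0$; on $\Omega_{0}$ one has $|t^{1/p}u|\ge t^{1/p}\eta\to+\infty$, so $f(t^{1/p}u)/(|t^{1/p}u|^{p-2}t^{1/p}u)\to+\infty$ by \ref{f1}, while $s\mapsto f(s)/(|s|^{p-2}s)$ is bounded from below on $\mathbb{R}\setminus\{0\}$ (it is continuous there with limit $0$ at the origin), and analogously for the $g$-term; Fatou's lemma then yields $\liminf_{t\to\infty}\Psi(t)=+\infty$. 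Third, $\Psi$ is continuous on $(0,\infty)$ by dominated convergence on compact $t$-intervals. Fourth, and this is the crucial point, $\Psi$ is strictly increasing: by \ref{f2}, for each fixed $x$ with $u(x)\ne0$ the map $t\mapsto f(t^{1/p}u(x))/(|t^{1/p}u(x)|^{p-2}t^{1/p}u(x))$ is strictly increasing, and likewise for the $g$-term; since at least one of $u,v$ is nonzero on a set of positive measure, $\Psi$ is strictly increasing.

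Putting these together, $\gamma'=C_{u,v}-\Psi$ is continuous and strictly decreasing on $(0,\infty)$, with $\gamma'(t)\to C_{u,v}>0$ as $t\to0^{+}$ and $\gamma'(t)\to-\infty$ as $t\to+\infty$; hence it has a unique zero $t_{0}>0$, with $\gamma'>0$ on $(0,t_{0})$ and $\gamma'<0$ on $(t_{0},\infty)$. Consequently $\gamma$ is strictly increasing on $(0,t_{0}]$ and strictly decreasing on $[t_{0},\infty)$, so $t_{0}$ is its unique maximizer on $(0,\infty)$, and $\gamma(t_{0})>0=\gamma(0)$ makes it the maximizer on $[0,\infty)$. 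By the equivalence of the first paragraph, $t_{0}$ is the unique $t>0$ with $(t^{1/p}u,t^{1/q}v)\in\mathcal{N}_{\mathrm{o}}$ and $I_{\mathrm{o}}(t_{0}^{1/p}u,t_{0}^{1/q}v)=\max_{t\ge0}I_{\mathrm{o}}(t^{1/p}u,t^{1/q}v)$, which is the claim. I expect the main obstacle to be the strict monotonicity of $\Psi$ (equivalently, that the fibering map has a single critical point), where the loss of homogeneity forces one to exploit \ref{f2} through its composition with $t\mapsto t^{1/p},t^{1/q}$ and to handle sign-changing $u,v$; a secondary technical point is the localization to $\Omega_{0}$ in the proof of $\Psi(t)\to+\infty$, needed because the divergence supplied by \ref{f1} is only felt on the support of $u$ (or of $v$).
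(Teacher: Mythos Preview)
Your proof is correct and follows essentially the same route as the paper: both analyze the fibering map $t\mapsto I_{\mathrm{o}}(t^{1/p}u,t^{1/q}v)$, show it is positive for small $t$ and tends to $-\infty$ (after division by $t$) as $t\to\infty$, then prove uniqueness of the critical point by establishing that the nonlinear part of the derivative is strictly increasing in $t$. The only cosmetic difference is that for this last step the paper differentiates the integrand and invokes the consequence $f'(t)t^{2}>(p-1)f(t)t$ of \ref{f2} (see \eqref{ej9}, \eqref{ej30}, \eqref{ej31}), whereas you appeal to the monotonicity hypothesis \ref{f2} directly on the composition $t\mapsto f(t^{1/p}u)/(|t^{1/p}u|^{p-2}t^{1/p}u)$; your argument is slightly more careful in spelling out the dominated-convergence and Fatou justifications for the limits of $\Psi$.
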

\begin{proof}
	Let $(u,v)\in E_{\mathrm{o}}\backslash\{(0,0)\}$ be fixed. We consider $h:[0,+\infty)\rightarrow\mathbb{R}$ defined by $h(t)=I_{\mathrm{o}}(t^{1/p}u,t^{1/q}v)$. Note that
	\[
	h'(t)t=\left\langle I_{\mathrm{o}}'(t^{1/p}u,t^{1/q}v),\left(\frac{1}{p}t^{1/p}u,\frac{1}{q}t^{1/q}v\right)\right\rangle.
	\]
	Thus, $t_{0}$ is a positive critical point of $h$ if and only if $(t_{0}^{1/p}u,t_{0}^{1/q}v)\in\mathcal{N}_{\mathrm{o}}$. Using Lemma~\ref{nehari-1}, the growth conditions of the nonlinearities and Sobolev embedding we can deduce that
	\[
	h(t)\geq t\left[\left(\frac{1}{q}-\delta  \max\left\{\frac{\alpha}{p},\frac{\beta}{q}\right\}-C\varepsilon\right)(\|u\|_{a_{\mathrm{o}},p}^{p}+\|v\|_{b_{\mathrm{o}},q}^{q})-C_{\varepsilon}t^{\frac{r-p}{p}}\|u\|_{a_{\mathrm{o}},p}^{r}-C_{\varepsilon}t^{\frac{s-q}{q}}\|v\|_{b_{\mathrm{o}},q}^{s}\right].
	\]
	Taking $\varepsilon$ sufficiently small, we conclude that $h(t)\geq0$ provided that $t>0$ is small. On the other hand, we can deduce that
	\[
	\frac{h(t)}{t}\leq \frac{1}{p}\|u\|_{a_{\mathrm{o}},p}^{p}+\frac{1}{q}\|v\|_{b_{\mathrm{o}},q}^{q}- \int_{\mathbb{R}^{N}}\lambda_{\mathrm{o}}(x)|u|^{\alpha}|v|^{\beta}\;\mathrm{d} x-\int_{\{u\neq0\}}\frac{F(t^{1/p}u)}{(t^{1/p}|u|)^{p}}|u|^{p}\;\mathrm{d} x-\int_{\{v\neq0\}}\frac{G(t^{1/q}v)}{(t^{1/q}|v|)^{q}}|v|^{q}\;\mathrm{d} x,
	\]
	which together with \ref{f1} implies that $h(t)\leq0$ for $t>0$ large.	Thus, $h$ has maximum points in $(0,+\infty)$. Now, note that every critical point $t\in(0,+\infty)$ of $h$ satisfies
	\begin{equation}\label{ej10}
	\frac{1}{p}\|u\|_{a_{\mathrm{o}},p}^{p}+\frac{1}{q}\|v\|_{b_{\mathrm{o}},q}^{q}- \int_{\mathbb{R}^{N}}\lambda_{\mathrm{o}}(x)|u|^{\alpha}|v|^{\beta}\;\mathrm{d} x=\frac{1}{p}\int_{\mathbb{R}^{N}}\frac{f(t^{1/p}u)u}{t^{1-\frac{1}{p}}}\;\mathrm{d} x+\frac{1}{q}\int_{\mathbb{R}^{N}}\frac{g(t^{1/q}v)v}{t^{1-\frac{1}{q}}}\;\mathrm{d} x.
	\end{equation}
	By using \eqref{ej9}, we have
	\begin{equation}\label{ej30}
	\frac{d}{dt}\left(\frac{f(t^{1/p}u)u}{t^{1-\frac{1}{p}}}\right)=\frac{f'(t^{1/p}u)(t^{1/p}u)^{2}-(p-1)f(t^{1/p}u)t^{1/p}u}{pt^{2-\frac{1}{p}}}>0,
	\end{equation}
	\begin{equation}\label{ej31}	
	\frac{d}{dt}\left(\frac{g(t^{1/q}v)v}{t^{1-\frac{1}{q}}}\right)=\frac{g'(t^{1/q}v)(t^{1/q}v)^{2}-(q-1)g(t^{1/q}v)t^{1/q}v}{qt^{2-\frac{1}{q}}}>0.
	\end{equation}
	Therefore, the right-hand side of \eqref{ej10} is increasing on $t>0$ which implies that the critical point is unique.	
\end{proof}

\section{Proof of Theorem~\ref{A}}\label{s4}

In order to prove Theorem~\ref{A}, we introduce the Nehari energy level associated with System~\eqref{paper2ej0} defined by
\[
c_{\mathcal{N}_{\mathrm{o}}}=\inf_{(u,v)\in\mathcal{N}_{\mathrm{o}}}I_{\mathrm{o}}(u,v).
\]
Let $(u_{n},v_{n})_{n}\subset \mathcal{N}_{\mathrm{o}}$ be a minimizing sequence to $c_{\mathcal{N}_{\mathrm{o}}}$, that is,
\begin{equation}\label{ej12}
I_{\mathrm{o}}(u_{n},v_{n})\rightarrow c_{\mathcal{N}_{\mathrm{o}}} \quad \mbox{and} \quad \left\langle I_{\mathrm{o}}'(u_{n},v_{n}),\left(\frac{1}{p}u_{n},\frac{1}{q}v_{n}\right)\right\rangle=0.
\end{equation}

\begin{proposition}\label{p1}
	The minimizing sequence $(u_{n},v_{n})_{n}$ is bounded in $E_{\mathrm{o}}$.
\end{proposition}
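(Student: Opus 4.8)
The plan is to argue by contradiction. Suppose that, along a subsequence, $\|(u_n,v_n)\|_{\mathrm o}=\|u_n\|_{a_{\mathrm o},p}+\|v_n\|_{b_{\mathrm o},q}\to\infty$; then also $\rho_n:=\|u_n\|_{a_{\mathrm o},p}^{p}+\|v_n\|_{b_{\mathrm o},q}^{q}\to\infty$. Rescale by setting $\hat u_n:=\rho_n^{-1/p}u_n$ and $\hat v_n:=\rho_n^{-1/q}v_n$, so that $\|\hat u_n\|_{a_{\mathrm o},p}^{p}+\|\hat v_n\|_{b_{\mathrm o},q}^{q}=1$; in particular $(\hat u_n,\hat v_n)$ is bounded in $E_{\mathrm o}$, hence, up to a subsequence, $\hat u_n\rightharpoonup\hat u$ in $E_{a_{\mathrm o},p}$ and $\hat v_n\rightharpoonup\hat v$ in $E_{b_{\mathrm o},q}$, with a.e.\ convergence and strong convergence in $L^{t}_{\mathrm{loc}}(\mathbb R^N)$ for the relevant exponents. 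Note that $(\hat u_n,\hat v_n)=(t_n^{1/p}u_n,t_n^{1/q}v_n)$ with $t_n=1/\rho_n$, so for every $R>0$ the pair $(R^{1/p}\hat u_n,R^{1/q}\hat v_n)$ lies on the fiber $t\mapsto I_{\mathrm o}(t^{1/p}u_n,t^{1/q}v_n)$, whose maximum is attained at $(u_n,v_n)$ by Lemma~\ref{neh}.

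The first, purely algebraic step is to divide the Nehari identity \eqref{ej3} by $\rho_n$; using $\alpha/p+\beta/q=1$ this yields
\[
\frac1p\|\hat u_n\|_{a_{\mathrm o},p}^{p}+\frac1q\|\hat v_n\|_{b_{\mathrm o},q}^{q}-\int_{\mathbb R^N}\lambda_{\mathrm o}(x)|\hat u_n|^{\alpha}|\hat v_n|^{\beta}\,\mathrm{d}x=\frac1{\rho_n}\left(\frac1p\int_{\mathbb R^N}f(u_n)u_n\,\mathrm{d}x+\frac1q\int_{\mathbb R^N}g(v_n)v_n\,\mathrm{d}x\right),
\]
and by Lemma~\ref{nehari-1} together with $\|\hat u_n\|_{a_{\mathrm o},p}^{p},\|\hat v_n\|_{b_{\mathrm o},q}^{q}\le1$ the left-hand side is bounded; hence $\rho_n^{-1}\int f(u_n)u_n$ and $\rho_n^{-1}\int g(v_n)v_n$ remain bounded. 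Next I would split into cases according to the concentration of $(\hat u_n,\hat v_n)$. If $(\hat u,\hat v)\neq(0,0)$, say $\hat u\neq0$, then on the positive-measure set $\{\hat u\neq0\}$ one has $|u_n(x)|=\rho_n^{1/p}|\hat u_n(x)|\to\infty$, so that $f(u_n)u_n|u_n|^{-p}=f(u_n)/(|u_n|^{p-2}u_n)\to+\infty$ there by \ref{f1}; writing $\rho_n^{-1}\int f(u_n)u_n=\int f(u_n)u_n|u_n|^{-p}|\hat u_n|^{p}\,\mathrm{d}x$ and applying Fatou's lemma (the integrand is nonnegative since $f(t)t\ge0$) forces $\rho_n^{-1}\int f(u_n)u_n\to+\infty$, contradicting the bound just obtained; the subcase $\hat v\neq0$ is symmetric, using $g$.

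The remaining case $(\hat u,\hat v)=(0,0)$ is the one I expect to be the main obstacle. Here I would exploit the translation invariance coming from the $1$-periodicity of $a_{\mathrm o},b_{\mathrm o},\lambda_{\mathrm o}$ (see \ref{v1}): if $\sup_{y\in\mathbb R^N}\int_{B_1(y)}(|\hat u_n|^{p}+|\hat v_n|^{q})\,\mathrm{d}x\not\to0$, choose $y_n\in\mathbb Z^N$ recapturing a fixed amount of mass; then $(u_n(\cdot+y_n),v_n(\cdot+y_n))$ is again a minimizing sequence on $\mathcal N_{\mathrm o}$ with the same $\rho_n$, and the rescaled translated sequence has a nonzero weak limit, which brings us back to the previous case. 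Otherwise $\sup_y\int_{B_1(y)}(|\hat u_n|^{p}+|\hat v_n|^{q})\,\mathrm{d}x\to0$, so Lions' vanishing lemma gives $\hat u_n\to0$ in $L^{r}(\mathbb R^N)$ and $\hat v_n\to0$ in $L^{s}(\mathbb R^N)$. Using that $(u_n,v_n)$ maximizes its fiber, for each fixed $R>0$,
\[
c_{\mathcal N_{\mathrm o}}+o_n(1)=I_{\mathrm o}(u_n,v_n)\ge I_{\mathrm o}(R^{1/p}\hat u_n,R^{1/q}\hat v_n)\ge R\left(\frac1q-\delta\max\left\{\frac\alpha p,\frac\beta q\right\}\right)-\int_{\mathbb R^N}\bigl(F(R^{1/p}\hat u_n)+G(R^{1/q}\hat v_n)\bigr)\,\mathrm{d}x,
\]
where I used Lemma~\ref{nehari-1}, the normalization $\|\hat u_n\|_{a_{\mathrm o},p}^{p}+\|\hat v_n\|_{b_{\mathrm o},q}^{q}=1$ and $\frac1p\ge\frac1q$; by \eqref{growth}, the $L^{r}$/$L^{s}$–vanishing, and the boundedness of $\hat u_n,\hat v_n$ in $L^{p}$, $L^{q}$, the last integral tends to $0$ (first bounding the $\varepsilon$–term by $C\varepsilon R$, then letting $\varepsilon\to0$). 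Letting $R\to+\infty$ yields $c_{\mathcal N_{\mathrm o}}=+\infty$, which is absurd; this contradiction establishes the boundedness of $(u_n,v_n)$. The delicate points to handle carefully are the exact form of Lions' lemma adapted to $W^{1,p}$ and $W^{1,q}$, the legitimacy of replacing $(u_n,v_n)$ by an integer translate (which relies on \ref{v1}), and the ordering of the limits in $n$, $\varepsilon$ and $R$ in the vanishing estimate.
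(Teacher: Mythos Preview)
Your argument is correct and follows essentially the same route as the paper: the same rescaling $\hat u_n=\rho_n^{-1/p}u_n$, $\hat v_n=\rho_n^{-1/q}v_n$, the same dichotomy on the weak limit, the same use of periodicity and Lions' lemma in the vanishing case, and the same fiber-maximality estimate $I_{\mathrm o}(u_n,v_n)\ge I_{\mathrm o}(R^{1/p}\hat u_n,R^{1/q}\hat v_n)$ leading to the contradiction for large $R$. The only cosmetic difference is that in the nonvanishing case you divide the Nehari identity by $\rho_n$ and work with $f(t)t/|t|^{p}$, whereas the paper divides $I_{\mathrm o}(u_n,v_n)$ by $K_n$ and works with $F(t)/|t|^{p}$; both lead to the same Fatou contradiction.
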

\begin{proof}
	Arguing by contradiction we suppose that $\|(u_{n},v_{n})\|_{\mathrm{o}}=\|u_{n}\|_{a_{\mathrm{o}},p}+\|v_{n}\|_{b_{\mathrm{o}},q}\rightarrow+\infty$, as $n\rightarrow+\infty$. We define $w_{n}=u_{n}/K_{n}^{1/p}$ and $z_{n}=v_{n}/K_{n}^{1/q}$, where $K_{n}:=\|u_{n}\|_{a_{\mathrm{o}},p}^{p}+\|v_{n}\|_{b_{\mathrm{o}},q}^{q}$. Thus,
	\[
	\|w_{n}\|_{a_{\mathrm{o}},p}^{p}+\|z_{n}\|_{b_{\mathrm{o}},q}^{q}=1 \quad \mbox{and} \quad K_{n}\rightarrow+\infty, \hspace{0,2cm} \mbox{as} \hspace{0,2cm} n\rightarrow+\infty.
	\]
	Hence, $(w_{n},z_{n})_{n}$ is bounded in $E_{\mathrm{o}}$. Thus, we may assume up to a subsequence that
	\begin{itemize}
		\item $(w_{n},z_{n})\rightharpoonup(w_{0},z_{0})$ weakly in $E_{\mathrm{o}}$;
		\item $w_{n}\rightarrow w_{0}$ strongly in $L^{r}_{loc}(\mathbb{R}^{N})$, for all $p\leq r<p^{*}$;
		\item $z_{n}\rightarrow z_{0}$ strongly in $L^{s}_{loc}(\mathbb{R}^{N})$, for all $q\leq s<q^{*}$.
		\item $w_{n}(x)\rightarrow w_{0}(x)$ and $z_{n}(x)\rightarrow z_{0}(x)$, almost everywhere in $\mathbb{R}^{N}$.
	\end{itemize}	
	We split the argument into two cases:
	
	\vspace{0,3cm}
	
	\noindent \textbf{Case 1.} $(w_{0},z_{0})\neq(0,0)$.
	
	\vspace{0,3cm}
	
	Let us assume without loss of generality that $w_{0}\neq0$. By using Lemma~\ref{nehari-1} and \eqref{ej12} we can deduce that
	\[
	o_{n}(1)=\frac{I_{\mathrm{o}}(u_{n},v_{n})}{K_{n}}\leq \frac{1}{p}+\delta \max\left\{\frac{\alpha}{p},\frac{\beta}{q}\right\}-\int_{\{u_{n}\neq 0\}}\frac{F(u_{n})}{K_{n}}\;\mathrm{d} x.
	\]
	The last inequality jointly with \ref{f1} and Fatou's Lemma leads to
	\[
	\frac{1}{p}+\delta \max\left\{\frac{\alpha}{p},\frac{\beta}{q}\right\}\geq\int_{\{u_{n}\neq0\}}\liminf_{n\rightarrow+\infty}\frac{F(u_{n})}{|u_{n}|^{p}}|w_{n}|^{p}\;\mathrm{d} x=+\infty,
	\]
	which is a contradiction.
	
	\vspace{0,3cm}
	
	\noindent \textbf{Case 2.} $(w_{0},z_{0})=(0,0)$.
	
	\vspace{0,3cm}
	
	First, we claim that for any $R>0$ we have
	\begin{equation}\label{ej13}
	\lim_{n\rightarrow+\infty}\sup_{y\in\mathbb{R}^{N}}\int_{B_{R}(y)}(|w_{n}|^{p}+|z_{n}|^{q})\;\mathrm{d} x=0.
	\end{equation}
	In fact, if \eqref{ej13} does not holds then there exist $R,\eta>0$ such that
	\[
	\lim_{n\rightarrow+\infty}\sup_{y\in\mathbb{R}^{N}}\int_{B_{R}(y)}(|w_{n}|^{p}+|z_{n}|^{q})\;\mathrm{d} x\geq\eta>0.
	\]
	Hence, we can consider a sequence $(y_{n})_{n}\subset\mathbb{Z}^{N}$ such that
	\[
	\lim_{n\rightarrow+\infty}\int_{B_{R}(y_{n})}(|w_{n}|^{p}+|z_{n}|^{q})\;\mathrm{d} x\geq\frac{\eta}{2}>0.
	\]
	We define the shift sequence $(\tilde{w}_{n}(x),\tilde{z}_{n}(x))=(w_{n}(x+y_{n}),z_{n}(x+y_{n}))$. Since $a_{\mathrm{o}}(\cdot)$ and $b_{\mathrm{o}}(\cdot)$ are periodic, we have $\|(w_{n},z_{n})\|_{\mathrm{o}}=\|(\tilde{w}_{n},\tilde{z}_{n})\|_{\mathrm{o}}$. Thus, up to a subsequence, we may assume that
	\begin{itemize}
		\item $(\tilde{w}_{n},\tilde{z}_{n})\rightharpoonup(\tilde{w}_{0},\tilde{z}_{0})$ weakly in $E_{\mathrm{o}}$;
		\item $\tilde{w}_{n}\rightarrow \tilde{w}_{0}$ strongly in $L^{r}_{loc}(\mathbb{R}^{N})$, for all $p\leq r<p^{*}$;
		\item $\tilde{z}_{n}\rightarrow \tilde{z}_{0}$ strongly in $L^{s}_{loc}(\mathbb{R}^{N})$, for all $q\leq s<q^{*}$.
	\end{itemize}	
	Then, we have
	\[
	\lim_{n\rightarrow+\infty}\int_{B_{R}(0)}(|\tilde{w}_{n}|^{p}+|\tilde{z}_{n}|^{q})\;\mathrm{d} x=\lim_{n\rightarrow+\infty}\int_{B_{R}(y_{n})}(|w_{n}|^{p}+|z_{n}|^{q})\;\mathrm{d} x\geq\frac{\eta}{2}>0,
	\]
	which implies that $(\tilde{w}_{0},\tilde{z}_{0})\neq(0,0)$. Arguing as in \textbf{Case 1} we get a contradiction.
	
	Since \eqref{ej13} holds, it follows from \cite[Lemma~1.21]{will} (see also \cite{lionss}) that
	\begin{equation}\label{lionn}
	\lim_{n\rightarrow+\infty}\int_{\mathbb{R}^{N}}|w_{n}|^{r}\;\mathrm{d} x=0 \quad \mbox{and} \quad  \lim_{n\rightarrow+\infty}\int_{\mathbb{R}^{N}}|z_{n}|^{s}\;\mathrm{d} x=0.
	\end{equation}
	By using \eqref{growth} and \eqref{lionn}, we can conclude that
	\begin{equation}\label{ej16}
	\lim_{n\rightarrow+\infty}\int_{\mathbb{R}^{N}}F(\xi^{1/p} w_{n})\;\mathrm{d} x=\lim_{n\rightarrow+\infty}\int_{\mathbb{R}^{N}}G(\xi^{1/q} z_{n})\;\mathrm{d} x=0, \quad \mbox{for all} \hspace{0,2cm} \xi>0.
	\end{equation}
	Since $(u_{n},v_{n})_{n}\subset\mathcal{N}_{\mathrm{o}}$, it follows from Lemma~\ref{neh} that
	\begin{equation}\label{ej18}
	I_{\mathrm{o}}(u_{n},v_{n})\geq I_{\mathrm{o}}(t^{1/p}u_{n},t^{1/q}v_{n}), \quad \mbox{for all} \hspace{0,2cm} t\geq0.
	\end{equation}
	Taking $t=\xi/K_{n}$ and combining \eqref{ej16} and \eqref{ej18} we deduce that
	\[
	c_{\mathcal{N}_{\mathrm{o}}}+o_{n}(1)=I_{\mathrm{o}}(u_{n},v_{n})\geq I_{\mathrm{o}}(\xi^{1/p} w_{n},\xi^{1/q} z_{n})\geq \left(\frac{1}{q}-\delta \max\left\{\frac{\alpha}{p},\frac{\beta}{q}\right\}\right)\xi+o_{n}(1),
	\]
	which is a contradiction for $\xi>0$ sufficiently large. Therefore, $(u_{n},v_{n})_{n}$ is bounded in $E_{\mathrm{o}}$.
\end{proof}

\begin{remark}
Using the same ideas discussed in the proof of Proposition \ref{p1} we mention that the energy functional $I_{\mathrm{o}}$
is coercive over the Nehari manifold $\mathcal{N}_{\mathrm{o}}$.
\end{remark}

In view of Proposition~\ref{p1} we may assume, up to a subsequence, that
\begin{itemize}
	\item $(u_{n},v_{n})\rightharpoonup(u_{0},v_{0})$ weakly in $E_{\mathrm{o}}$;
	\item $u_{n}\rightarrow u_{0}$ strongly in $L^{r}_{loc}(\mathbb{R}^{N})$, for all $p\leq r<p^{*}$;
	\item $v_{n}\rightarrow v_{0}$ strongly in $L^{s}_{loc}(\mathbb{R}^{N})$, for all $q\leq s<q^{*}$;
	\item $u_{n}(x)\rightarrow u_{0}(x)$ and $v_{n}(x)\rightarrow v_{0}(x)$, almost everywhere in $\mathbb{R}^{N}$.
\end{itemize}
Since $C^{\infty}_{0}(\mathbb{R}^{N})\times C^{\infty}_{0}(\mathbb{R}^{N})$ is dense into the space $E_{\mathrm{o}}$, it follows by standard arguments that $I_{\mathrm{o}}'(u_{0},v_{0})=0$, that is, $(u_{0},v_{0})$ is a solution for System~\eqref{paper2ej0}. In order to get a nontrivial solution, we shall prove the following result:

\begin{proposition}\label{p2}
	Let $(u_{n},v_{n})_{n}\subset \mathcal{N}_{\mathrm{o}}$ be the minimizing sequence satisfying \eqref{ej12}. Then, there exists a sequence $(y_{n})_{n}\subset\mathbb{R}^{N}$ and constants $R,\eta>0$ such that $|y_{n}|\rightarrow\infty$ as $n\rightarrow\infty$, and
	\begin{equation}\label{vanish}
	\liminf_{n\rightarrow+\infty}\int_{B_{R}(y_{n})}(|u_{n}|^{p}+|v_{n}|^{q})\;\mathrm{d} x\geq\eta>0.
	\end{equation}
\end{proposition}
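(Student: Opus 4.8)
The plan is to exclude vanishing of the minimizing sequence $(u_{n},v_{n})_{n}$ by a Lions-type concentration argument, and then to push the concentration points off to infinity using that the weak limit $(u_{0},v_{0})$ is trivial. The statement is only needed in the case $(u_{0},v_{0})=(0,0)$ — otherwise $(u_{0},v_{0})$ is already a nontrivial solution of \eqref{paper2ej0} — so I would work under that assumption. Recall from Proposition~\ref{p1} that $(u_{n},v_{n})_{n}$ is bounded in $E_{\mathrm{o}}$, hence also in $W^{1,p}(\mathbb{R}^{N})\times W^{1,q}(\mathbb{R}^{N})$ and in $L^{p}(\mathbb{R}^{N})\times L^{q}(\mathbb{R}^{N})$ by \ref{v3}.

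\emph{Step 1 (non-vanishing).} I would argue by contradiction, supposing
\[
\lim_{n\to\infty}\sup_{y\in\mathbb{R}^{N}}\int_{B_{R}(y)}\bigl(|u_{n}|^{p}+|v_{n}|^{q}\bigr)\,\mathrm{d}x=0\qquad\text{for every }R>0 .
\]
By \cite[Lemma~1.21]{will} this forces $u_{n}\to0$ in $L^{r}(\mathbb{R}^{N})$ and $v_{n}\to0$ in $L^{s}(\mathbb{R}^{N})$, with $r\in(p,p^{*})$, $s\in(q,q^{*})$ as in \ref{f3}. Combining \eqref{growth1} with the boundedness of $(u_{n})$ in $L^{p}$ and $(v_{n})$ in $L^{q}$ gives $\int_{\mathbb{R}^{N}}|f(u_{n})u_{n}|\,\mathrm{d}x\le\varepsilon\|u_{n}\|_{p}^{p}+C_{\varepsilon}\|u_{n}\|_{r}^{r}$ for every $\varepsilon>0$, and likewise for $g(v_{n})v_{n}$; sending $n\to\infty$ and then $\varepsilon\to0$ yields $\int_{\mathbb{R}^{N}}f(u_{n})u_{n}\,\mathrm{d}x\to0$ and $\int_{\mathbb{R}^{N}}g(v_{n})v_{n}\,\mathrm{d}x\to0$. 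On the other hand, the Nehari identity \eqref{ej3} together with Lemma~\ref{nehari-1} and $1/p\ge1/q$ gives
\[
\left(\frac{1}{q}-\delta\max\left\{\frac{\alpha}{p},\frac{\beta}{q}\right\}\right)\bigl(\|u_{n}\|_{a_{\mathrm{o}},p}^{p}+\|v_{n}\|_{b_{\mathrm{o}},q}^{q}\bigr)\le\frac{1}{p}\int_{\mathbb{R}^{N}}f(u_{n})u_{n}\,\mathrm{d}x+\frac{1}{q}\int_{\mathbb{R}^{N}}g(v_{n})v_{n}\,\mathrm{d}x .
\]
Since the coefficient on the left is strictly positive by \ref{v2}, the right-hand side tending to $0$ forces $\|(u_{n},v_{n})\|_{\mathrm{o}}\to0$, contradicting Lemma~\ref{principal}$(ii)$. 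Hence there are $R,\eta>0$ and $(y_{n})_{n}\subset\mathbb{R}^{N}$ with $\liminf_{n\to\infty}\int_{B_{R}(y_{n})}(|u_{n}|^{p}+|v_{n}|^{q})\,\mathrm{d}x\ge\eta$; replacing $y_{n}$ by a nearest lattice point and enlarging $R$, I may assume $y_{n}\in\mathbb{Z}^{N}$.

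\emph{Step 2 ($|y_{n}|\to\infty$).} Here I would invoke $(u_{0},v_{0})=(0,0)$. If $(y_{n})$ had a bounded subsequence, then along it $B_{R}(y_{n})\subset B_{R_{0}}(0)$ for a fixed $R_{0}>0$, and the local strong convergences $u_{n}\to u_{0}$ in $L^{p}_{loc}(\mathbb{R}^{N})$, $v_{n}\to v_{0}$ in $L^{q}_{loc}(\mathbb{R}^{N})$ would give
\[
\int_{B_{R_{0}}(0)}\bigl(|u_{0}|^{p}+|v_{0}|^{q}\bigr)\,\mathrm{d}x=\lim_{n\to\infty}\int_{B_{R_{0}}(0)}\bigl(|u_{n}|^{p}+|v_{n}|^{q}\bigr)\,\mathrm{d}x\ge\liminf_{n\to\infty}\int_{B_{R}(y_{n})}\bigl(|u_{n}|^{p}+|v_{n}|^{q}\bigr)\,\mathrm{d}x\ge\eta>0 ,
\]
contradicting $(u_{0},v_{0})=(0,0)$. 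Thus $|y_{n}|\to\infty$ and \eqref{vanish} follows.

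The delicate point is Step 1. Because $f$ and $g$ are only $(p,q)$-superlinear and need not satisfy \eqref{ar}, and because $\lambda_{\mathrm{o}}$ may change sign so that the coupling term has no definite sign, vanishing cannot be ruled out through the usual sign or energy–derivative estimates; the mechanism is the structural inequality \eqref{ej27} of Lemma~\ref{nehari-1}, which absorbs the indefinite coupling term into the leading part with a strictly positive coefficient (hypothesis \ref{v2}), so that once Lions' lemma has annihilated the subcritical terms the contradiction reduces to the elementary lower bound of Lemma~\ref{principal}$(ii)$. The lattice translation and the use of local compactness in Step 2 are then routine.
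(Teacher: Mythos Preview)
Your Step~1 is essentially the paper's own argument: assume vanishing, apply Lions' lemma to kill the subcritical terms, feed this into the Nehari identity \eqref{ej3} combined with the structural estimate of Lemma~\ref{nehari-1}, and derive $\|(u_{n},v_{n})\|_{\mathrm{o}}\to0$. The only cosmetic difference is the endpoint of the contradiction: the paper invokes continuity of $I_{\mathrm{o}}$ and $c_{\mathcal{N}_{\mathrm{o}}}>0$, whereas you invoke the uniform lower bound of Lemma~\ref{principal}(ii). Both are valid and equivalent here.

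Your Step~2, however, goes beyond the paper. The statement of Proposition~\ref{p2} asserts $|y_{n}|\to\infty$, but the paper's proof establishes only the non-vanishing \eqref{vanish} and never addresses the divergence of the centers. You are right that this conclusion cannot hold unconditionally---if $(u_{0},v_{0})\neq(0,0)$ one could take $y_{n}\equiv0$---and that the proposition is only invoked in Case~2 of Proposition~\ref{p3}, precisely under the hypothesis $(u_{0},v_{0})=(0,0)$. Your argument via local strong convergence is the standard and correct way to force $|y_{n}|\to\infty$ under that assumption. So your proof is actually more complete than the paper's on this point, at the (harmless) cost of making the contextual hypothesis $(u_{0},v_{0})=(0,0)$ explicit.
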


\begin{proof}
	Arguing by contradiction, we suppose that \eqref{vanish} does not hold. Then we have
	\[
	\lim_{n\rightarrow\infty}\sup_{y\in\mathbb{R}^{N}}\int_{B_{R}(y)}|u_{n}|^{p}\;\mathrm{d} x=0 \quad \mbox{and} \quad  \lim_{n\rightarrow\infty}\sup_{y\in\mathbb{R}^{N}}\int_{B_{R}(y)}|v_{n}|^{q}\;\mathrm{d} x=0,
	\]
	for any $R>0$. Hence, we conclude that
	\begin{equation}\label{lion}
	\lim_{n\rightarrow+\infty}\int_{\mathbb{R}^{N}}|u_{n}|^{r}\;\mathrm{d} x=0 \quad \mbox{and} \quad  \lim_{n\rightarrow+\infty}\int_{\mathbb{R}^{N}}|v_{n}|^{s}\;\mathrm{d} x=0.
	\end{equation}
	Using \eqref{growth1} and Lemma~\ref{nehari-1}, we can deduce that
	\begin{eqnarray}
	0 & = & \left\langle I_{\mathrm{o}}'(u_{n},v_{n}),\left(\frac{1}{p}u_{n},\frac{1}{q}v_{n}\right)\right\rangle\nonumber\\
	& \geq & \left(\frac{1}{q}-\delta \max\left\{\frac{\alpha}{p},\frac{\beta}{q}\right\}\right)(\|u_{n}\|_{a_{\mathrm{o}},p}^{p}+\|v_{n}\|_{b_{\mathrm{o}},q}^{q})-\frac{1}{p}\int_{\mathbb{R}^{N}}f(u_{n})u_{n}\;\mathrm{d} x-\frac{1}{q}\int_{\mathbb{R}^{N}}g(v_{n})v_{n}\;\mathrm{d} x\nonumber\\
	& \geq & \left(\frac{1}{q}-\delta \max\left\{\frac{\alpha}{p},\frac{\beta}{q}\right\}-\varepsilon\right)(\|u_{n}\|_{a_{\mathrm{o}},p}^{p}+\|v_{n}\|_{b_{\mathrm{o}},q}^{q})-C_{\varepsilon}(\|u_{n}\|_{r}^{r}+\|v_{n}\|_{s}^{s}).\label{ej15}
	\end{eqnarray}
	Taking $\varepsilon>0$ sufficiently small such that
	\[
	\frac{1}{q}-\delta \max\left\{\frac{\alpha}{p},\frac{\beta}{q}\right\}-\varepsilon>0,
	\]
	it follows from \eqref{lion} and \eqref{ej15} that
	\[
	0\geq \left(\frac{1}{q}-\delta \max\left\{\frac{\alpha}{p},\frac{\beta}{q}\right\}-\varepsilon\right)(\|u_{n}\|_{a_{\mathrm{o}},p}^{p}+\|v_{n}\|_{b_{\mathrm{o}},q}^{q})+o_{n}(1),
	\]
	which implies that $\|(u_{n},v_{n})\|_{\mathrm{o}}\rightarrow0$ as $n\rightarrow+\infty$. However, since $I_{\mathrm{o}}(u_{n},v_{n})\rightarrow c_{\mathcal{N}_{\mathrm{o}}}>0$ and $I_{\mathrm{o}}$ is continuous, the minimizing sequence $(u_{n},v_{n})_{n}$ can not converge to zero strongly in $E_{\mathrm{o}}$. Therefore, this contradiction implies that \eqref{vanish} holds.	
\end{proof}

\begin{proposition}\label{p3}
	There exists a ground state solution for System~\eqref{paper2ej0}.
\end{proposition}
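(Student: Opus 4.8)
The plan is to extract from the minimizing sequence a \emph{nontrivial} critical point of $I_{\mathrm{o}}$ lying on $\mathcal{N}_{\mathrm{o}}$ and to show it realizes the level $c_{\mathcal{N}_{\mathrm{o}}}$. The algebraic fact I would use throughout is that on the Nehari manifold the energy has a nonnegative, coupling-free form: subtracting the identity \eqref{ej3} from $I_{\mathrm{o}}(u,v)$ gives, for every $(u,v)\in\mathcal{N}_{\mathrm{o}}$,
\[
I_{\mathrm{o}}(u,v)=\int_{\mathbb{R}^{N}}\left(\frac{1}{p}f(u)u-F(u)\right)\mathrm{d}x+\int_{\mathbb{R}^{N}}\left(\frac{1}{q}g(v)v-G(v)\right)\mathrm{d}x,
\]
and by Lemma~\ref{gs} and the remark following it both integrands are nonnegative (so in particular $c_{\mathcal{N}_{\mathrm{o}}}\geq 0$).

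First I would apply Proposition~\ref{p1}: the minimizing sequence $(u_{n},v_{n})$ is bounded, so, passing to a subsequence, it converges weakly in $E_{\mathrm{o}}$, strongly in $L^{r}_{loc}\times L^{s}_{loc}$, and a.e.\ to some $(u_{0},v_{0})$, which --- as recorded just before Proposition~\ref{p2} --- is a critical point of $I_{\mathrm{o}}$. If $(u_{0},v_{0})\neq(0,0)$ then $(u_{0},v_{0})\in\mathcal{N}_{\mathrm{o}}$, the displayed formula applies to it, and Fatou's lemma (nonnegative integrands, a.e.\ convergence) yields $I_{\mathrm{o}}(u_{0},v_{0})\leq\liminf_{n}I_{\mathrm{o}}(u_{n},v_{n})=c_{\mathcal{N}_{\mathrm{o}}}$; since membership in $\mathcal{N}_{\mathrm{o}}$ forces $I_{\mathrm{o}}(u_{0},v_{0})\geq c_{\mathcal{N}_{\mathrm{o}}}$, equality holds and $(u_{0},v_{0})$ is a ground state.

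If instead $(u_{0},v_{0})=(0,0)$, I would invoke Proposition~\ref{p2} to obtain $(y_{n})\subset\mathbb{R}^{N}$ with $|y_{n}|\to\infty$ and constants $R,\eta>0$ as in \eqref{vanish}; replacing each $y_{n}$ by the nearest point of $\mathbb{Z}^{N}$ (at worst enlarging $R$) we may assume $y_{n}\in\mathbb{Z}^{N}$, and we set $(\tilde u_{n},\tilde v_{n}):=(u_{n}(\cdot+y_{n}),v_{n}(\cdot+y_{n}))$. By the $1$-periodicity of $a_{\mathrm{o}},b_{\mathrm{o}},\lambda_{\mathrm{o}}$ in \ref{v1}, both $I_{\mathrm{o}}$ and $\mathcal{N}_{\mathrm{o}}$ are invariant under such translations, so $(\tilde u_{n},\tilde v_{n})$ is again a bounded minimizing sequence on $\mathcal{N}_{\mathrm{o}}$; passing to a subsequence, $(\tilde u_{n},\tilde v_{n})\rightharpoonup(\tilde u_{0},\tilde v_{0})$, which is again a critical point of $I_{\mathrm{o}}$, and \eqref{vanish} together with the local strong convergence forces $\int_{B_{R}(0)}(|\tilde u_{0}|^{p}+|\tilde v_{0}|^{q})\,\mathrm{d}x\geq\eta>0$, so $(\tilde u_{0},\tilde v_{0})\neq(0,0)$. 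Then the argument of the preceding paragraph, applied to $(\tilde u_{n},\tilde v_{n})$ and $(\tilde u_{0},\tilde v_{0})$, produces a ground state.

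The step I expect to be the main obstacle is the weak lower semicontinuity inequality $I_{\mathrm{o}}(\text{weak limit})\leq\liminf_{n}I_{\mathrm{o}}(u_{n},v_{n})$: because the coupling term $\int_{\mathbb{R}^{N}}\lambda_{\mathrm{o}}(x)|u|^{\alpha}|v|^{\beta}\,\mathrm{d}x$ is sign-changing and the principal part is inhomogeneous, it cannot be read off from convexity or from Fatou applied to $I_{\mathrm{o}}$ directly. The device that resolves it is exactly the restriction to $\mathcal{N}_{\mathrm{o}}$, where $I_{\mathrm{o}}$ collapses to the sum of the two nonnegative integrals above, to which Fatou applies cleanly; the remaining care is only the lattice-translation bookkeeping in the vanishing case, which the periodicity hypothesis \ref{v1} handles.
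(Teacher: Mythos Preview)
Your proposal is correct and follows essentially the same approach as the paper: both split into the cases $(u_{0},v_{0})\neq(0,0)$ and $(u_{0},v_{0})=(0,0)$, both use the identity $I_{\mathrm{o}}(u,v)=\frac{1}{p}\int(f(u)u-pF(u))+\frac{1}{q}\int(g(v)v-qG(v))$ on $\mathcal{N}_{\mathrm{o}}$ together with Fatou's lemma to obtain $I_{\mathrm{o}}(u_{0},v_{0})\leq c_{\mathcal{N}_{\mathrm{o}}}$, and both handle the vanishing case by translating by lattice points from Proposition~\ref{p2} and invoking the periodicity in \ref{v1}. Your explicit identification of the weak lower semicontinuity as the key obstacle, and of the Nehari-manifold reduction as the device that resolves it, is exactly the mechanism the paper relies on.
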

\begin{proof}
	Let $(u_{0},v_{0})$ be the critical point of the energy functional $I$. We split the proof into two cases.	
	\vspace{0,3cm}
	
	\noindent \textbf{Case 1.} $(u_{0},v_{0})\neq(0,0)$.
	
	\vspace{0,3cm}
	
	\noindent If $(u_{0},v_{0})\neq(0,0)$, then we have a nontrivial solution for System~\eqref{paper2ej0}. It remains to prove that $(u_{0},v_{0})$ is in fact a ground state. We note that $(u_{0},v_{0})\in\mathcal{N}_{\mathrm{o}}$. Thus, $c_{\mathcal{N}_{\mathrm{o}}}\leq I_{\mathrm{o}}(u_{0},v_{0})$. On the other hand, using \eqref{ej17}, \eqref{ej12} and Fatou's Lemma, we can deduce that
	\begin{eqnarray*}
		c_{\mathcal{N}_{\mathrm{o}}}+o_{n}(1) & = & I_{\mathrm{o}}(u_{n},v_{n})-\left\langle I_{\mathrm{o}}'(u_{n},v_{n}),\left(\frac{1}{p}u_{n},\frac{1}{q}v_{n}\right)\right\rangle\\
		& = & \frac{1}{p}\int_{\mathbb{R}^{N}}(f(u_{n})u_{n}-pF(u_{n}))\;\mathrm{d} x+\frac{1}{q}\int_{\mathbb{R}^{N}}(g(v_{n})v_{n}-qG(v_{n}))\;\mathrm{d} x\\
		& \geq & \frac{1}{p}\int_{\mathbb{R}^{N}}(f(u_{0})u_{0}-pF(u_{0}))\;\mathrm{d} x+\frac{1}{q}\int_{\mathbb{R}^{N}}(g(v_{0})v_{0}-qG(v_{0}))\;\mathrm{d} x+o_{n}(1)\\
		& = & I_{\mathrm{o}}(u_{0},v_{0})-\left\langle I_{\mathrm{o}}'(u_{0},v_{0}),\left(\frac{1}{p}u_{0},\frac{1}{q}v_{0}\right)\right\rangle+o_{n}(1)\\
		& = & I_{\mathrm{o}}(u_{n},v_{n})+o_{n}(1),
	\end{eqnarray*}
	which implies that $c_{\mathcal{N}_{\mathrm{o}}}\geq I_{\mathrm{o}}(u_{0},v_{0})$. Therefore, $I_{\mathrm{o}}(u_{0},v_{0})=c_{\mathcal{N}_{\mathrm{o}}}$, that is, $(u_{0},v_{0})$ is a ground state solution for System~\eqref{paper2ej0}.
	
	\vspace{0,3cm}
	
	\noindent \textbf{Case 2.} $(u_{0},v_{0})=(0,0)$.
	
	\vspace{0,3cm}
	
	\noindent In light of Proposition~\ref{p2}, there exist a sequence $(y_{n})_{n}\subset\mathbb{R}^{N}$ and constants $R,\eta>0$ such that
	\begin{equation}\label{ej4}
	\liminf_{n\rightarrow+\infty}\int_{B_{R}(y_{n})}(|u_{n}|^{p}+|v_{n}|^{q})\;\mathrm{d} x\geq\eta>0.
	\end{equation}
	Without any loss of generality we assume that $(y_{n})_{n} \subset \mathbb{Z}^{N}$. Let us define the shift sequence $(\tilde{u}_{n}(x),\tilde{v}_{n}(x))=(u_{n}(x+y_{n}),v_{n}(x+y_{n}))$. Since $a_{\mathrm{o}}(\cdot)$, $b_{\mathrm{o}}(\cdot)$ and $\lambda_{\mathrm{o}}(\cdot)$ are periodic, we can use the invariance of the energy functional $I_{\mathrm{o}}$, to deduce that
	\[
	\|(u_{n},v_{n})\|_{\mathrm{o}}=\|(\tilde{u}_{n},\tilde{v}_{n})\|_{\mathrm{o}} \quad \mbox{and} \quad I_{\mathrm{o}}(u_{n},v_{n})=I_{\mathrm{o}}(\tilde{u}_{n},\tilde{v}_{n})\rightarrow c_{\mathcal{N}_{\mathrm{o}}}.
	\]
	Moreover, arguing as before, we can conclude that $(\tilde{u}_{n},\tilde{v}_{n})_{n}$ is a bounded sequence in $E_{\mathrm{o}}$. Thus, up to a subsequence, we may assume that
	\begin{itemize}
		\item $(\tilde{u}_{n},\tilde{v}_{n})\rightharpoonup(\tilde{u}_{0},\tilde{v}_{0})$ weakly in $E_{\mathrm{o}}$;
		\item $\tilde{u}_{n}\rightarrow \tilde{u}_{0}$ strongly in $L^{r}_{loc}(\mathbb{R}^{N})$, for all $p\leq r<p^{*}$;
		\item $\tilde{v}_{n}\rightarrow \tilde{v}_{0}$ strongly in $L^{s}_{loc}(\mathbb{R}^{N})$, for all $q\leq s<q^{*}$.
	\end{itemize}
	Moreover, $(\tilde{u},\tilde{v})$ is a critical point of $I_{\mathrm{o}}$. By using \eqref{ej4} one sees that
	\[
	\liminf_{n\rightarrow\infty}\int_{B_{R}(0)}(|\tilde{u}_{n}|^{p}+|\tilde{v}_{n}|^{q})\;\mathrm{d} x=\liminf_{n\rightarrow\infty}\int_{B_{R}(y_{n})}(|u_{n}|^{p}+|v_{n}|^{q})\;\mathrm{d} x\geq\eta>0.
	\]
	Therefore, $(\tilde{u},\tilde{v})\neq(0,0)$ is a solution for System~\eqref{paper2ej0}. The conclusion follows from \textbf{Case 1}.
\end{proof}

\begin{proposition}\label{p4}
	If \ref{f4} holds and $\lambda_{\mathrm{o}}(x)\geq0$ for all $x\in\mathbb{R}^{N}$, then there exists a nonnegative ground state for System~\eqref{paper2ej0}.
\end{proposition}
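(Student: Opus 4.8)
The plan is to start from the ground state $(u_0,v_0)\in\mathcal{N}_{\mathrm{o}}$ furnished by Proposition~\ref{p3}, which satisfies $I_{\mathrm{o}}(u_0,v_0)=c_{\mathcal{N}_{\mathrm{o}}}$, and to convert it into a nonnegative ground state by replacing each component with its absolute value and then projecting back onto $\mathcal{N}_{\mathrm{o}}$. First I would record the elementary facts that make the sign removal harmless: since $|\nabla|u_0||=|\nabla u_0|$ a.e.\ one has $\||u_0|\|_{a_{\mathrm{o}},p}=\|u_0\|_{a_{\mathrm{o}},p}$ and $\||v_0|\|_{b_{\mathrm{o}},q}=\|v_0\|_{b_{\mathrm{o}},q}$ (so in particular $(|u_0|,|v_0|)\in E_{\mathrm{o}}\backslash\{(0,0)\}$), while $\int_{\mathbb{R}^{N}}\lambda_{\mathrm{o}}(x)\big||u_0|\big|^{\alpha}\big||v_0|\big|^{\beta}\,\mathrm{d}x=\int_{\mathbb{R}^{N}}\lambda_{\mathrm{o}}(x)|u_0|^{\alpha}|v_0|^{\beta}\,\mathrm{d}x$. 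Using $\alpha/p+\beta/q=1$ the coupling term scales linearly in $t$ along the fibers, so combining these two identities with hypothesis~\ref{f4} (which gives $F(t^{1/p}|u_0|)\ge F(t^{1/p}u_0)$ and $G(t^{1/q}|v_0|)\ge G(t^{1/q}v_0)$ pointwise) yields
\[
I_{\mathrm{o}}(t^{1/p}|u_0|,t^{1/q}|v_0|)\le I_{\mathrm{o}}(t^{1/p}u_0,t^{1/q}v_0)\qquad\text{for every }t\ge0.
\]

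Next I would apply Lemma~\ref{neh} to $(|u_0|,|v_0|)$: there is a unique $t_0>0$ with $(\bar u,\bar v):=(t_0^{1/p}|u_0|,t_0^{1/q}|v_0|)\in\mathcal{N}_{\mathrm{o}}$ and $I_{\mathrm{o}}(\bar u,\bar v)=\max_{t\ge0}I_{\mathrm{o}}(t^{1/p}|u_0|,t^{1/q}|v_0|)$. Since $(u_0,v_0)\in\mathcal{N}_{\mathrm{o}}$, the uniqueness in Lemma~\ref{neh} applied to $(u_0,v_0)$ forces the fibering maximum to be attained at $t=1$, that is $\max_{t\ge0}I_{\mathrm{o}}(t^{1/p}u_0,t^{1/q}v_0)=I_{\mathrm{o}}(u_0,v_0)=c_{\mathcal{N}_{\mathrm{o}}}$. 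Inserting the displayed inequality into this chain gives
\[
c_{\mathcal{N}_{\mathrm{o}}}\le I_{\mathrm{o}}(\bar u,\bar v)=\max_{t\ge0}I_{\mathrm{o}}(t^{1/p}|u_0|,t^{1/q}|v_0|)\le\max_{t\ge0}I_{\mathrm{o}}(t^{1/p}u_0,t^{1/q}v_0)=c_{\mathcal{N}_{\mathrm{o}}},
\]
hence $I_{\mathrm{o}}(\bar u,\bar v)=c_{\mathcal{N}_{\mathrm{o}}}$; thus $(\bar u,\bar v)$ is a nonnegative minimizer of $I_{\mathrm{o}}$ over $\mathcal{N}_{\mathrm{o}}$.

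It remains to check that this constrained minimizer is a genuine critical point of $I_{\mathrm{o}}$, and this is the step that requires care. Writing $\varphi(u,v)=\langle I_{\mathrm{o}}'(u,v),((1/p)u,(1/q)v)\rangle$ as in the proof of Lemma~\ref{principal}, the set $\mathcal{N}_{\mathrm{o}}=\varphi^{-1}(0)$ is a $C^{1}$-manifold, so the Lagrange multiplier rule provides $\mu\in\mathbb{R}$ with $I_{\mathrm{o}}'(\bar u,\bar v)=\mu\,\varphi'(\bar u,\bar v)$. Testing this identity against $((1/p)\bar u,(1/q)\bar v)$ and using both $\langle I_{\mathrm{o}}'(\bar u,\bar v),((1/p)\bar u,(1/q)\bar v)\rangle=0$ (because $(\bar u,\bar v)\in\mathcal{N}_{\mathrm{o}}$) and the strict inequality $\langle\varphi'(\bar u,\bar v),((1/p)\bar u,(1/q)\bar v)\rangle<0$ established in the proof of Lemma~\ref{principal} through \eqref{ej9}, one gets $\mu=0$, hence $I_{\mathrm{o}}'(\bar u,\bar v)=0$. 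Therefore $(\bar u,\bar v)$ is a nonnegative weak solution of System~\eqref{paper2ej0} with $I_{\mathrm{o}}(\bar u,\bar v)=c_{\mathcal{N}_{\mathrm{o}}}$, i.e.\ a nonnegative ground state.

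The main obstacle is precisely this last step — verifying that $\mathcal{N}_{\mathrm{o}}$ is a natural constraint so that the multiplier vanishes; everything else is bookkeeping resting on Lemmas~\ref{nehari-1} and \ref{neh} and on hypothesis~\ref{f4}. As an alternative route avoiding the multiplier argument, one could instead symmetrize and reproject the minimizing sequence of Proposition~\ref{p1} term by term, thereby producing a \emph{nonnegative} minimizing sequence in $\mathcal{N}_{\mathrm{o}}$, and then repeat verbatim the compactness analysis of Propositions~\ref{p1}--\ref{p3}; the nonnegativity of the resulting ground state would then pass to the limit through the almost everywhere convergence.
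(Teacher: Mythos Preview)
Your proposal is correct and follows essentially the same route as the paper: take the ground state $(u_0,v_0)$ from Proposition~\ref{p3}, pass to $(|u_0|,|v_0|)$, project onto $\mathcal{N}_{\mathrm{o}}$ via Lemma~\ref{neh}, and compare with the fibering maximum of $(u_0,v_0)$ to conclude that the projected pair attains $c_{\mathcal{N}_{\mathrm{o}}}$. Your write-up is in fact more careful than the paper's, which simply declares the resulting minimizer a ground state; your explicit Lagrange-multiplier step (using $\langle\varphi'(\bar u,\bar v),((1/p)\bar u,(1/q)\bar v)\rangle<0$ from Lemma~\ref{principal}) fills in the justification that $\mathcal{N}_{\mathrm{o}}$ is a natural constraint.
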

\begin{proof}
	Let $(u_{0},v_{0})$ be the ground state solution obtained in Proposition~\ref{p3}. Then, from Lemma~\ref{neh} there exists a unique $t_{0}>0$ such that $(t_{0}^{1/p}|u_{0}|,t_{0}^{1/q}|v_{0}|)\in\mathcal{N}_{\mathrm{o}}$. Since $\lambda_{\mathrm{o}}(x)\geq0$, it follows from \ref{f4} that $I_{\mathrm{o}}(t_{0}^{1/p}|u_{0}|,t_{0}^{1/q}|v_{0}|)\leq I_{\mathrm{o}}(t_{0}^{1/p}u_{0},t_{0}^{1/q}v_{0})$. Thus, since $(u_{0},v_{0})\in\mathcal{N}_{\mathrm{o}}$ we have
	\[
	I_{\mathrm{o}}(t_{0}^{1/p}|u_{0}|,t_{0}^{1/q}|v_{0}|)\leq \max_{t\geq0}I_{\mathrm{o}}(t^{1/p}u_{0},t^{1/q}v_{0})=I_{\mathrm{o}}(u_{0},v_{0})=c_{\mathcal{N}_{\mathrm{o}}}.
	\]
	Therefore, $(t_{0}^{1/p}|u_{0}|,t_{0}^{1/q}|v_{0}|)\in\mathcal{N}_{\mathrm{o}}$ is a nonnegative ground state solution for System~\eqref{paper2ej0}.
\end{proof}

At this point, we have obtained a nonnegative ground state solution $(u,v)\in E_{\mathrm{o}}$ for System~\eqref{paper2ej0}. However, this solution could be semitrivial, that is, $(u,0)$ or $(0,v)$. The next step is to prove that if \ref{v3'} holds, then for some $\lambda_{0}>0$ the ground state can not be semitrivial.

\begin{proposition}\label{p7}
	Suppose that \ref{v3'} holds. There exists $\lambda_{0}>0$ such that if $(u,v)\in E_{\mathrm{o}}$ is a ground state for System~\eqref{paper2ej0}, then $u\neq0$ and $v\neq0$.
\end{proposition}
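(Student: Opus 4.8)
The plan is to argue by contradiction: if some ground state of \eqref{paper2ej0} were semitrivial, I would produce an element of $\mathcal{N}_{\mathrm{o}}$ with energy strictly below $c_{\mathcal{N}_{\mathrm{o}}}$, which is impossible. By the symmetric roles of the two equations it is enough to exclude ground states of the form $(u_{0},0)$ with $u_{0}\neq 0$; the case $(0,v_{0})$ is handled identically after exchanging $p\leftrightarrow q$ and $\alpha\leftrightarrow\beta$. First I would normalise the sign. Since the coupling integral vanishes on semitrivial pairs, \ref{f4} together with Lemma~\ref{neh} lets me replace $u_{0}$ by the Nehari projection of $|u_{0}|$ without raising the energy, so I may assume $u_{0}\geq 0$. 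Testing $I_{\mathrm{o}}'(u_{0},0)=0$ against $(\phi,0)$ shows that $u_{0}$ is a nontrivial nonnegative weak solution of $-\Delta_{p}u_{0}+a_{\mathrm{o}}(x)u_{0}^{p-1}=f(u_{0})\geq 0$, so the strong maximum principle forces $u_{0}>0$ on all of $\mathbb{R}^{N}$, in particular on the ball $B_{R}(0)$ provided by \ref{v3'}. Finally, since $(u_{0},0)\in\mathcal{N}_{\mathrm{o}}$, Lemma~\ref{neh} gives that $t=1$ maximises the fibering map $t\mapsto I_{\mathrm{o}}(t^{1/p}u_{0},0)$, hence
\[
I_{\mathrm{o}}(t^{1/p}u_{0},0)\leq I_{\mathrm{o}}(u_{0},0)=c_{\mathcal{N}_{\mathrm{o}}}\qquad\mbox{for every }t\geq 0 .
\]

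Next I would set up the competing family. Fix $\psi\in C^{\infty}_{0}(B_{R}(0))$ with $\psi\geq 0$ and $\psi\not\equiv 0$, and put $\kappa:=\int_{\mathbb{R}^{N}}\lambda_{\mathrm{o}}(x)|u_{0}|^{\alpha}|\psi|^{\beta}\;\mathrm{d} x$; because $\supp\psi\subset B_{R}(0)$, $u_{0}>0$ there and $\lambda_{\mathrm{o}}\geq\lambda_{0}>0$ there, we have $\kappa\geq\lambda_{0}\int_{B_{R}(0)}u_{0}^{\alpha}\psi^{\beta}\;\mathrm{d} x>0$. For $t>0$, Lemma~\ref{neh} applied to $(u_{0},t\psi)$ gives a unique $s(t)>0$ with $\bigl(s(t)^{1/p}u_{0},\,s(t)^{1/q}t\psi\bigr)\in\mathcal{N}_{\mathrm{o}}$. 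Expanding each term and using $\alpha/p+\beta/q=1$ — so that $|s^{1/p}u_{0}|^{\alpha}|s^{1/q}t\psi|^{\beta}=s\,t^{\beta}|u_{0}|^{\alpha}|\psi|^{\beta}$ — one obtains for all $s>0$
\[
I_{\mathrm{o}}\bigl(s^{1/p}u_{0},s^{1/q}t\psi\bigr)=I_{\mathrm{o}}(s^{1/p}u_{0},0)+\frac{s\,t^{q}}{q}\|\psi\|_{b_{\mathrm{o}},q}^{q}-s\,t^{\beta}\kappa-\int_{\mathbb{R}^{N}}G(s^{1/q}t\psi)\;\mathrm{d} x .
\]
Since $s^{1/q}t\psi\geq 0$ and $G\geq 0$ on $[0,\infty)$ (a consequence of \ref{f1} and \ref{f2}), and since $I_{\mathrm{o}}(s^{1/p}u_{0},0)\leq c_{\mathcal{N}_{\mathrm{o}}}$, this yields
\[
I_{\mathrm{o}}\bigl(s^{1/p}u_{0},s^{1/q}t\psi\bigr)\leq c_{\mathcal{N}_{\mathrm{o}}}+s\,t^{\beta}\Bigl(\tfrac{1}{q}\,t^{q-\beta}\|\psi\|_{b_{\mathrm{o}},q}^{q}-\kappa\Bigr).
\]

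To close the argument I would exploit that $\beta<q$ (which follows from \eqref{ej1}): the bracket tends to $-\kappa<0$ as $t\to 0^{+}$, hence is $\leq-\kappa/2$ for all sufficiently small $t$. Moreover $s(t)$ cannot tend to $0$ along a sequence $t\to 0^{+}$, for otherwise $\|(s(t)^{1/p}u_{0},s(t)^{1/q}t\psi)\|_{\mathrm{o}}\to 0$, contradicting the uniform lower bound in Lemma~\ref{principal}(ii); thus $s(t)\geq\sigma$ for some $\sigma>0$ and all small $t$. For such a $t$ the pair $\bigl(s(t)^{1/p}u_{0},s(t)^{1/q}t\psi\bigr)$ belongs to $\mathcal{N}_{\mathrm{o}}$ and satisfies $I_{\mathrm{o}}\leq c_{\mathcal{N}_{\mathrm{o}}}-\tfrac{\sigma\kappa}{2}t^{\beta}<c_{\mathcal{N}_{\mathrm{o}}}$, contradicting the definition of $c_{\mathcal{N}_{\mathrm{o}}}$ as the infimum of $I_{\mathrm{o}}$ over $\mathcal{N}_{\mathrm{o}}$. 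The case $(0,v_{0})$ is entirely analogous with the perturbed pair $\bigl(s^{1/p}t\varphi,s^{1/q}v_{0}\bigr)$, $\varphi\in C^{\infty}_{0}(B_{R}(0))$: there the coupling term has order $t^{\alpha}$ while the added kinetic term has order $t^{p}$, and $\alpha<p$ again makes the coupling dominate as $t\to 0^{+}$.

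I expect the two genuinely delicate points to be (i) the sign normalisation together with the strong maximum principle, which is precisely what makes $\kappa>0$ (without positivity of $u_{0}$ on $B_{R}(0)$ the perturbation would be ineffective), and (ii) the non-degeneracy $\liminf_{t\to 0^{+}}s(t)>0$, supplied by Lemma~\ref{principal}(ii); everything else is bookkeeping with the homogeneities dictated by $\alpha/p+\beta/q=1$. Note that the argument only uses $\lambda_{0}>0$, so the conclusion in fact holds for every admissible value of $\lambda_{0}$ in \ref{v3'}.
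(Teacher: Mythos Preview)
Your argument is correct and takes a genuinely different route from the paper's.

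The paper introduces the uncoupled scalar problems \eqref{edj1} and \eqref{edj2}, with associated Nehari levels $c_{\mathcal{N}_{a_{\mathrm{o}}}}$ and $c_{\mathcal{N}_{b_{\mathrm{o}}}}$, takes positive ground states $u_{0},v_{0}$ of these, projects the pair $(u_{0},v_{0})$ onto $\mathcal{N}_{\mathrm{o}}$, and uses \ref{v3'} to estimate
\[
c_{\mathcal{N}_{\mathrm{o}}}\leq t_{0}\Bigl(\tfrac{1}{p}\|u_{0}\|_{a_{\mathrm{o}},p}^{p}+\tfrac{1}{q}\|v_{0}\|_{b_{\mathrm{o}},q}^{q}-\lambda_{0}\textstyle\int_{B_{R}(0)}u_{0}^{\alpha}v_{0}^{\beta}\Bigr),
\]
concluding that for \emph{some} $\lambda_{0}>0$ one has $c_{\mathcal{N}_{\mathrm{o}}}<\min\{c_{\mathcal{N}_{a_{\mathrm{o}}}},c_{\mathcal{N}_{b_{\mathrm{o}}}}\}$. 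A semitrivial ground state $(u,0)$ would then satisfy $I_{\mathrm{o}}(u,0)=I_{a_{\mathrm{o}}}(u)\geq c_{\mathcal{N}_{a_{\mathrm{o}}}}>c_{\mathcal{N}_{\mathrm{o}}}$, a contradiction.

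You instead perturb the hypothetical semitrivial ground state $(u_{0},0)$ directly: after a sign normalisation and the strong maximum principle (both of which the paper also uses, in Propositions~\ref{p4} and~\ref{p5}), you insert a small second component $t\psi$ supported where $\lambda_{\mathrm{o}}\geq\lambda_{0}$, project onto $\mathcal{N}_{\mathrm{o}}$, and exploit the homogeneity mismatch $\beta<q$ (resp.\ $\alpha<p$) to make the coupling term dominate as $t\to 0^{+}$. The lower bound on $s(t)$ via Lemma~\ref{principal}(ii) is the right tool here. Your approach avoids any reference to the scalar ground states and, as you note, works for \emph{every} $\lambda_{0}>0$ compatible with \ref{v3'}, whereas the paper's comparison of energy levels genuinely needs $\lambda_{0}$ large enough to push $c_{\mathcal{N}_{\mathrm{o}}}$ below both scalar levels. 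In this sense your argument is both more self-contained and sharper; the paper's version is conceptually simpler once the scalar ground states are in hand, but yields a weaker statement.
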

\begin{proof}
	If we consider $\lambda_{\mathrm{o}}(x)=0$, for all $x\in\mathbb{R}^{N}$, then we have the uncoupled equation
	\begin{equation}\label{edj1}
	-\Delta_{p}u+a_{\mathrm{o}}(x)|u|^{p-2}u=f(u), \quad x\in\mathbb{R}^{N}. \tag{$S_{a_{\mathrm{o}}}$}
	\end{equation}
	Let $I_{a_{\mathrm{o}}}:E_{a_{\mathrm{o}},p}\rightarrow\mathbb{R}$ be the energy functional associated to \eqref{edj1} defined by
	\[
	I_{a_{\mathrm{o}}}(u)=\frac{1}{p}\|u\|_{a_{\mathrm{o}},p}^{p}-\int_{\mathbb{R}^{N}}F(u)\;\mathrm{d} x.
	\]
	The Nehari manifold associated to \eqref{edj1} is given by
	\[
	\mathcal{N}_{a_{\mathrm{o}}}=\left\{u\in E_{a_{\mathrm{o}},p}\backslash\{0\}:\langle I_{a_{\mathrm{o}}}^{\prime}(u),u\rangle=0\right\}.
	\]
	Note that the same arguments used in this work holds true for equation \eqref{edj1}. Thus, let $u_{0}\in\mathcal{N}_{a_{\mathrm{o}}}$ be a positive ground state solution for equation \eqref{edj1}. By similar arguments used in the proof of Lemma~\ref{neh} we can deduce that:
	\begin{itemize}
		\item $I_{a_{\mathrm{o}}}(tu_{0})$ is increasing for $0<t<1$;
		\item $I_{a_{\mathrm{o}}}(tu_{0})$ is decreasing for $t>1$;
		\item $I_{a_{\mathrm{o}}}(tu_{0})\rightarrow-\infty$, as $t\rightarrow+\infty$.
	\end{itemize}
	Therefore, $\max_{t\geq0}I_{a_{\mathrm{o}}}(tu_{0})=I_{a_{\mathrm{o}}}(u_{0})$. Analogously, we can introduce $I_{b_{\mathrm{o}}}$, $\mathcal{N}_{b_{\mathrm{o}}}$ and conclude that there exists a positive ground state solution $v_{0}\in\mathcal{N}_{b_{\mathrm{o}}}$ for the uncoupled equation
	\begin{equation}\label{edj2}
	-\Delta_{q}u+b_{\mathrm{o}}(x)|v|^{q-2}v=g(v), \quad x\in\mathbb{R}^{N}. \tag{$S_{b_{\mathrm{o}}}$}
	\end{equation}
	Moreover, $\max_{t\geq0}I_{b_{\mathrm{o}}}(tv_{0})=I_{b_{\mathrm{o}}}(v_{0})$. It follows from Lemma~\ref{neh} that there exists $t_{0}>0$ such that $(t_{0}^{1/p}u_{0},t_{0}^{1/q}v_{0})\in\mathcal{N}_{\mathrm{o}}$. Hence, using \ref{v3'} we can deduce that
	\[
	c_{\mathcal{N}_{\mathrm{o}}} \leq I_{\mathrm{o}}(t_{0}^{1/p}u_{0},t_{0}^{1/q}v_{0})\\
	\leq t_{0}\left(\frac{1}{p}\|u_{0}\|_{a_{\mathrm{o}},p}^{p}+\frac{1}{q}\|v_{0}\|_{b_{\mathrm{o}},q}^{q}- \lambda_{0}\int_{B_{R}(0)}u_{0}^{\alpha}v_{0}^{\beta}\;\mathrm{d} x\right).
	\]
	Thus, for some $\lambda_{0}>0$ we have $c_{\mathcal{N}_{\mathrm{o}}}<\min\{c_{\mathcal{N}_{a_{\mathrm{o}}}},c_{\mathcal{N}_{b_{\mathrm{o}}}}\}$. Therefore, if $I_{\mathrm{o}}(u,v)=c_{\mathcal{N}_{\mathrm{o}}}$, then we have $u\neq0$ and $v\neq0$.
\end{proof}

\begin{proposition}\label{p5}
	If \ref{v3'} holds for suitable $\lambda_{0}>0$, then there exists a positive ground state for System~\eqref{paper2ej0}.
\end{proposition}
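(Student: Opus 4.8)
The plan is to assemble results already in hand: Proposition~\ref{p4} produces a nonnegative ground state, Proposition~\ref{p7} rules out semitrivial ground states once $\lambda_0$ is large, and the only thing left is to upgrade nonnegativity to strict positivity by a strong maximum principle for the $p$- and $q$-Laplacian. Concretely, first I would fix $\lambda_0>0$ large enough that the strict inequality $c_{\mathcal N_{\mathrm o}}<\min\{c_{\mathcal N_{a_{\mathrm o}}},c_{\mathcal N_{b_{\mathrm o}}}\}$ obtained in the proof of Proposition~\ref{p7} holds. By Proposition~\ref{p4} (with $\lambda_{\mathrm o}(x)\ge0$ for all $x$, as in Theorem~\ref{A}(ii)) System~\eqref{paper2ej0} admits a nonnegative ground state $(u,v)\in\mathcal N_{\mathrm o}$, and by Proposition~\ref{p7} this $(u,v)$ satisfies $u\not\equiv0$ and $v\not\equiv0$. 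Hence $(u,v)$ is a nontrivial, nonnegative weak solution of \eqref{paper2ej0}.

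Next I would record regularity and apply the maximum principle. The subcritical growth \ref{f3} together with a standard Moser iteration gives $u,v\in L^\infty_{\mathrm{loc}}(\mathbb R^N)$, so the right-hand sides $f(u)+\alpha\lambda_{\mathrm o}(x)|u|^{\alpha-2}u|v|^{\beta}$ and $g(v)+\beta\lambda_{\mathrm o}(x)|v|^{\beta-2}v|u|^{\alpha}$ belong to $L^\infty_{\mathrm{loc}}(\mathbb R^N)$, and the regularity theory for quasilinear equations (DiBenedetto, Tolksdorf) yields $u,v\in C^{1,\gamma}_{\mathrm{loc}}(\mathbb R^N)$; in particular $u$ and $v$ are continuous. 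Then on any ball $B\subset\mathbb R^N$ the function $u\ge0$ satisfies, in the weak sense,
\[
-\Delta_p u + a_{\mathrm o}(x)|u|^{p-2}u = f(u)+\alpha\lambda_{\mathrm o}(x)u^{\alpha-1}v^{\beta}\ \ge\ f(u)\ \ge\ -C_B\,u^{p-1},
\]
where $\lambda_{\mathrm o}\ge0$ and $u,v\ge0$ let us drop the coupling term, and the decay $f(t)=o(t^{p-1})$ as $t\to0^{+}$ from \ref{f1}, plus local boundedness of $u$, produces $C_B>0$ with $f(t)\ge-C_B t^{p-1}$ for $t\in[0,\|u\|_{L^\infty(B)}]$. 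Thus $u$ is a nonnegative supersolution of $-\Delta_p u+(a_{\mathrm o}(x)+C_B)u^{p-1}=0$ on $B$, so by Trudinger's weak Harnack inequality (equivalently, Vázquez's strong maximum principle) $u$ vanishing at a point of $B$ forces $u\equiv0$ near that point. Consequently $\{x\in\mathbb R^N:u(x)=0\}$ is open and closed; since $u\not\equiv0$ and $\mathbb R^N$ is connected, it is empty, i.e.\ $u>0$ in $\mathbb R^N$. The same argument applied to the $q$-Laplacian equation for $v$ gives $v>0$ in $\mathbb R^N$, so $(u,v)$ is a positive ground state for System~\eqref{paper2ej0}.

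I expect the only delicate point to be this last step: one has to verify that, after moving the zero-order and coupling terms to the right-hand side, the resulting differential inequality for the $p$-Laplacian is bounded below by $-C_B u^{p-1}$ near the zero set of $u$ — which is precisely where $\lambda_{\mathrm o}\ge0$ (ensuring $\alpha\lambda_{\mathrm o}u^{\alpha-1}v^{\beta}\ge0$) and the sublinear decay $f(t)=o(t^{p-1})$ enter — so that the quasilinear strong maximum principle applies; the regularity input and the connectedness argument are then routine.
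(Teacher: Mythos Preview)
Your proposal is correct and follows essentially the same route as the paper: invoke Propositions~\ref{p4} and~\ref{p7} to obtain a nonnegative ground state with both components nontrivial, establish $C^{1,\gamma}$ regularity via standard quasilinear theory, and then apply a strong maximum principle to each equation separately. The paper is slightly terser in writing the differential inequality as $-\Delta_p u + a_{\mathrm o}(x)u^{p-1}\ge 0$ directly (using $f(u)\ge0$ for $u\ge0$, which follows from \ref{f1} and \ref{f2}) and citing Pucci--Serrin, whereas you absorb a possible lower bound $f(u)\ge -C_B u^{p-1}$ into the zero-order term and invoke V\'azquez/Trudinger plus a connectedness argument; these are equivalent implementations of the same idea.
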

\begin{proof}
	According to Proposition \ref{p4} we obtain a nonnegative ground state solution $(u , v)$ for the problem \eqref{paper2ej0}. By using standard arguments for regularity of weak solutions for quasilinear elliptic equations, we have that the functions $u, v$ belong to $C^{1,\alpha}$ for some $\alpha \in (0,1)$, that is, we know that $u, v$ are H\"{o}lder continuous functions, see \cite{Lieber,Lieber2}. It follows from Proposition~\ref{p3} that $(u, v)$ is not trivial. Moreover, in view of Proposition \ref{p7}, the pair $(u, v)$ is not semitrivial, that is, the sets $\{ x \in \mathbb{R}^{N} : u(x) = 0 \}$ and $\{ x \in \mathbb{R}^{N} : v(x) = 0 \}$ are different from the whole space $\mathbb{R}^{N}$. Thus, we have concluded that
	\begin{equation*}
	\left\{
	\begin{array}{lr}
	-\Delta_{p} u +a_{0}(x)u^{p-1} \geq 0, & x\in\mathbb{R}^{N},\\
	u \in E_{a_{0},p} \cap C^{1,\alpha}, \ u \neq 0,
	\end{array}
	\right.
	\end{equation*}
	and
	\begin{equation*}
	\left\{
	\begin{array}{lr}
	-\Delta_{q} v + b_{0}(x)v^{p-1} \geq 0, & x\in\mathbb{R}^{N},\\
	v \in E_{b_{0},q} \cap C^{1,\alpha}, \ v \neq 0.
	\end{array}
	\right.
	\end{equation*}
	Here we mention that $s \rightarrow \beta_{1}(s) := a_{0}(x)s^{p -1}$ and $s \rightarrow \beta_{2}(x) := b_{0}(x)s^{q -1}$ are nondecreasing functions for each  $s > 0$ and $x \in \mathbb{R}^{N}$. By applying the Strong Maximum Principle \cite{pucci} we infer that $u > 0$ and $v > 0$ in $\mathbb{R}^{N}$. This ends the proof.
\end{proof}

\begin{proof}[Proof of Theorem~\ref{A}]
	It follows from Propositions~\ref{p3},~\ref{p4}, \ref{p7} and \ref{p5}.
\end{proof}

\section{Proof of Theorem~\ref{B}}\label{s5}

In this Section we are concerned with the existence of ground states for System~\eqref{ej00}, when the potentials are asymptotically periodic. Analogously to the periodic case, we introduce the Nehari manifold associated to System~\eqref{ej00} defined by
\[
\mathcal{N}:=\left\{(u,v)\in E\backslash\{(0,0)\}:\left\langle I^{\prime}(u,v),\left(\frac{1}{p}u,\frac{1}{q}v\right)\right\rangle\right\},
\]
and the ground state energy $c_{\mathcal{N}}:=\inf_{(u,v)\in\mathcal{N}}I(u,v)$. We point out that all results obtained in Section~\ref{nehari} remains true in the asymptotically periodic case. Thus, $\mathcal{N}$ is a $C^{1}$-manifold and for any $(u,v)\in E\backslash\{(0,0)\}$ there exists a unique $t_{0}>0$, depending only on $(u,v)$, such that
\begin{equation}\label{ej19}
(t_{0}^{1/p}u,t_{0}^{1/q}v)\in\mathcal{N} \quad \mbox{and} \quad I(t_{0}^{1/p}u,t_{0}^{1/q}v)=\max_{t\geq0}I(t^{1/p}u,t^{1/q}v).
\end{equation}
In order to get a ground state solution for \eqref{ej00} we establish a relation between the energy levels $c_{\mathcal{N}_{\mathrm{o}}}$ and $c_{\mathcal{N}}$.

\begin{lemma}\label{rel}
	$c_{\mathcal{N}}<c_{\mathcal{N}_{\mathrm{o}}}$.
\end{lemma}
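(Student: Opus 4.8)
The plan is to exploit the strict inequalities in \ref{v4} to show that the ground‑state level for the asymptotically periodic system lies strictly below that of the periodic one. Since the periodic problem \eqref{paper2ej0} admits a ground state (Proposition~\ref{p3}), fix such a pair $(u_{\mathrm{o}},v_{\mathrm{o}})\in\mathcal N_{\mathrm{o}}$ with $I_{\mathrm{o}}(u_{\mathrm{o}},v_{\mathrm{o}})=c_{\mathcal N_{\mathrm{o}}}$; by Proposition~\ref{p4} we may and do take $u_{\mathrm{o}}\geq0$, $v_{\mathrm{o}}\geq0$. The idea is to use $(u_{\mathrm{o}},v_{\mathrm{o}})$ as a test pair for the functional $I$: by \eqref{ej19} there is a unique $t_{0}>0$ with $(t_{0}^{1/p}u_{\mathrm{o}},t_{0}^{1/q}v_{\mathrm{o}})\in\mathcal N$, hence
\[
c_{\mathcal N}\leq I\bigl(t_{0}^{1/p}u_{\mathrm{o}},t_{0}^{1/q}v_{\mathrm{o}}\bigr)\leq\max_{t\geq0}I\bigl(t^{1/p}u_{\mathrm{o}},t^{1/q}v_{\mathrm{o}}\bigr).
\]
So it suffices to compare this maximum with $c_{\mathcal N_{\mathrm{o}}}=\max_{t\geq0}I_{\mathrm{o}}(t^{1/p}u_{\mathrm{o}},t^{1/q}v_{\mathrm{o}})$, which holds by Lemma~\ref{neh}.

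**Key comparison.** Writing out the difference of the two functionals at the \emph{same} argument $(t^{1/p}u_{\mathrm{o}},t^{1/q}v_{\mathrm{o}})$, the nonlinear terms $F,G$ cancel, and one is left with
\[
I(t^{1/p}u_{\mathrm{o}},t^{1/q}v_{\mathrm{o}})-I_{\mathrm{o}}(t^{1/p}u_{\mathrm{o}},t^{1/q}v_{\mathrm{o}})
= t\!\int_{\mathbb R^{N}}\!\Bigl(\tfrac1p(a-a_{\mathrm{o}})|u_{\mathrm{o}}|^{p}+\tfrac1q(b-b_{\mathrm{o}})|v_{\mathrm{o}}|^{q}\Bigr)\mathrm{d}x
- t\!\int_{\mathbb R^{N}}\!(\lambda-\lambda_{\mathrm{o}})|u_{\mathrm{o}}|^{\alpha}|v_{\mathrm{o}}|^{\beta}\mathrm{d}x.
\]
By \ref{v4} we have $a-a_{\mathrm{o}}<0$, $b-b_{\mathrm{o}}<0$ and $\lambda-\lambda_{\mathrm{o}}>0$ pointwise, so each of the three contributions is $\leq0$; moreover the pair $(u_{\mathrm{o}},v_{\mathrm{o}})$ is nontrivial — and in fact, after Proposition~\ref{p7}, has both components nonzero on a set of positive measure — so at least one of the integrals is strictly positive and the whole right‑hand side is \emph{strictly} negative for every $t>0$. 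Consequently
\[
\max_{t\geq0}I(t^{1/p}u_{\mathrm{o}},t^{1/q}v_{\mathrm{o}})<\max_{t\geq0}I_{\mathrm{o}}(t^{1/p}u_{\mathrm{o}},t^{1/q}v_{\mathrm{o}})=c_{\mathcal N_{\mathrm{o}}}.
\]
Here one should note that the maximum on the left is attained at some finite $t_{1}>0$ (the fibering map $t\mapsto I(t^{1/p}u_{\mathrm{o}},t^{1/q}v_{\mathrm{o}})$ is positive near $0$ and $\to-\infty$, exactly as in Lemma~\ref{neh}), so evaluating the displayed difference at $t=t_{1}$ gives a strict gap. Chaining the inequalities yields $c_{\mathcal N}\le I(t_{0}^{1/p}u_{\mathrm{o}},t_{0}^{1/q}v_{\mathrm{o}})\le\max_{t\ge0}I(t^{1/p}u_{\mathrm{o}},t^{1/q}v_{\mathrm{o}})<c_{\mathcal N_{\mathrm{o}}}$.

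**Main obstacle.** The only delicate point is guaranteeing that the strict inequality does not degenerate to an equality, i.e. that the comparison integral is genuinely positive rather than zero. This needs that $u_{\mathrm{o}}$ (or $v_{\mathrm{o}}$) does not vanish identically; for the pure potential terms $\int(a_{\mathrm{o}}-a)|u_{\mathrm{o}}|^{p}$ this is immediate from $u_{\mathrm{o}}\not\equiv0$ together with $a_{\mathrm{o}}-a>0$ \emph{everywhere} (so the integrand is $>0$ on the positivity set of $u_{\mathrm{o}}$). If one only knew $a\le a_{\mathrm{o}}$ a.e.\ one would need an extra argument, but the strict pointwise hypothesis in \ref{v4} makes this automatic. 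A minor technical care: one uses that $(u_{\mathrm{o}},v_{\mathrm{o}})$ and $(|u_{\mathrm{o}}|,|v_{\mathrm{o}}|)$ give the same value of all the integrals appearing above, and that $I$ and $I_{\mathrm{o}}$ share the identical nonlinear part, so that no sign issue arises from the coupling term $\int(\lambda-\lambda_{\mathrm{o}})|u_{\mathrm{o}}|^{\alpha}|v_{\mathrm{o}}|^{\beta}$, which is $\ge0$ regardless. This completes the proof of Lemma~\ref{rel}.
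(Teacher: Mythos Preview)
Your proof is correct and follows essentially the same route as the paper: take the (nonnegative) periodic ground state $(u_{\mathrm{o}},v_{\mathrm{o}})\in\mathcal N_{\mathrm{o}}$, project it onto $\mathcal N$ via the fibering map, and use the strict pointwise inequalities in \ref{v4} to get $I(t_{0}^{1/p}u_{\mathrm{o}},t_{0}^{1/q}v_{\mathrm{o}})<I_{\mathrm{o}}(t_{0}^{1/p}u_{\mathrm{o}},t_{0}^{1/q}v_{\mathrm{o}})\leq c_{\mathcal N_{\mathrm{o}}}$. One small remark: the appeal to Proposition~\ref{p7} is unnecessary (and that proposition requires the extra hypothesis \ref{v3'}); nontriviality of $(u_{\mathrm{o}},v_{\mathrm{o}})$ alone already forces one of the potential integrals $\int(a_{\mathrm{o}}-a)|u_{\mathrm{o}}|^{p}$ or $\int(b_{\mathrm{o}}-b)|v_{\mathrm{o}}|^{q}$ to be strictly positive, which is all you need.
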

\begin{proof}
	Let $(u,v)\in\mathcal{N}_{\mathrm{o}}$ be the nonnegative ground state solution for System~\eqref{paper2ej0} obtained in the preceding Section. In light of assumption \ref{v4}, we can deduce that
	\begin{equation}\label{ej20}
	\int_{\mathbb{R}^{N}}\left[(a(x)-a_{\mathrm{o}}(x))u^{p}+(b(x)-b_{\mathrm{o}}(x))v^{q}+(\lambda_{\mathrm{o}}(x)-\lambda(x))uv\right]\;\mathrm{d} x<0,
	\end{equation}
	By using \eqref{ej19} we get a $t_{0}>0$ such that $(t_{0}^{1/p}u,t_{0}^{1/q}v)\in\mathcal{N}$. Thus, it follows from \eqref{ej20} that
	\[
	I(t_{0}^{1/p}u,t_{0}^{1/q}v)-I_{\mathrm{o}}(t_{0}^{1/p}u,t_{0}^{1/q}v)<0.
	\]
	Therefore, since $(u,v)\in\mathcal{N}_{\mathrm{o}}$ we conclude that
	\[
	c_{\mathcal{N}}\leq I(t_{0}^{1/p}u,t_{0}^{1/q}v)<I_{\mathrm{o}}(t_{0}^{1/p}u,t_{0}^{1/q}v)\leq\max_{t\geq0}I_{\mathrm{o}}(t^{1/p}u,t^{1/q}v)=I_{\mathrm{o}}(u,v)=c_{\mathcal{N}_{\mathrm{o}}},
	\]
	which finishes the proof.	
\end{proof}

Let us consider a minimizing sequence $(u_{n},v_{n})_{n}\subset\mathcal{N}$ to $c_{\mathcal{N}}$, that is
\begin{equation}\label{ej21}
I(u_{n},v_{n})\rightarrow c_{\mathcal{N}} \quad \mbox{and} \quad \left\langle I'(u_{n},v_{n}),\left(\frac{1}{p}u_{n},\frac{1}{q}v_{n}\right)\right\rangle=0.
\end{equation}

\begin{proposition}\label{p6}
	The minimizing sequence $(u_{n},v_{n})_{n}$ is bounded in $E$.
\end{proposition}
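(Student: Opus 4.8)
The plan is to follow the scheme of Proposition~\ref{p1}, the only genuinely new difficulty being that $a,b,\lambda$ are no longer translation invariant, so the shift argument must be rebuilt from the decay hypothesis \ref{v4}. First I would argue by contradiction, assuming $\|(u_n,v_n)\|=\|u_n\|_{a,p}+\|v_n\|_{b,q}\to+\infty$, normalize by $K_n:=\|u_n\|_{a,p}^p+\|v_n\|_{b,q}^q$, and set $w_n:=u_n/K_n^{1/p}$, $z_n:=v_n/K_n^{1/q}$, so that $\|w_n\|_{a,p}^p+\|z_n\|_{b,q}^q=1$ and $K_n\to+\infty$; passing to a subsequence, $(w_n,z_n)\rightharpoonup(w_0,z_0)$ in $E$, with local strong and a.e. convergence. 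If $(w_0,z_0)\neq(0,0)$, say $w_0\neq0$, then combining \eqref{ej21}, the estimate of Lemma~\ref{nehari-1} (valid for the asymptotically periodic data by the remark following it) and the nonnegativity of $F,G$ gives
\[
o_n(1)=\frac{I(u_n,v_n)}{K_n}\le\frac1p+\delta\max\Big\{\frac\alpha p,\frac\beta q\Big\}-\int_{\{u_n\neq0\}}\frac{F(u_n)}{|u_n|^p}\,|w_n|^p\,\mathrm{d}x,
\]
and since $|u_n(x)|=K_n^{1/p}|w_n(x)|\to+\infty$ on the positive-measure set $\{w_0\neq0\}$, hypothesis \ref{f1} and Fatou's Lemma force the right-hand side to $-\infty$, a contradiction.

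The substantial case is $(w_0,z_0)=(0,0)$. Here I would first prove vanishing, $\lim_{n}\sup_{y\in\mathbb R^N}\int_{B_R(y)}(|w_n|^p+|z_n|^q)\,\mathrm{d}x=0$ for every $R>0$. If it failed, one could extract $y_n\in\mathbb Z^N$ (after enlarging $R$) and $\eta>0$ with $\int_{B_R(y_n)}(|w_n|^p+|z_n|^q)\ge\eta$; since $w_n\to0$ in $L^p_{loc}$ and $z_n\to0$ in $L^q_{loc}$, necessarily $|y_n|\to+\infty$. Writing $\tilde w_n:=w_n(\cdot+y_n)$, $\tilde z_n:=z_n(\cdot+y_n)$ and using the periodicity of $a_{\mathrm o},b_{\mathrm o},\lambda_{\mathrm o}$ together with $\lim_{|x|\to\infty}|a_{\mathrm o}(x)-a(x)|=0$ from \ref{v4}, a change of variables gives $\|\tilde w_n\|_{a_{\mathrm o},p}^p=\|w_n\|_{a,p}^p+\int_{\mathbb R^N}(a_{\mathrm o}-a)|w_n|^p\,\mathrm{d}x$, and the last integral is $o_n(1)$ on splitting over $B_M(0)$ and its complement (boundedness of $a_{\mathrm o}-a$, local strong convergence, boundedness of $\|w_n\|_p$, then $M\to\infty$); the same applies to $z_n$ and to the coupling term. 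Consequently $(\tilde w_n,\tilde z_n)$ is bounded in $E_{\mathrm o}$ with a nonzero weak limit and $I_{\mathrm o}(\tilde u_n,\tilde v_n)/K_n=I(u_n,v_n)/K_n+o_n(1)=o_n(1)$, so rerunning the previous argument with $I_{\mathrm o}$ yields a contradiction. This shift step — replacing the exact translation invariance used in Proposition~\ref{p1} by the asymptotic control from \ref{v4}, and handling the mass of $|w_n|^p,|z_n|^q$ that may escape to infinity — is the point that requires care; everything else is routine.

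Once vanishing is established, \cite[Lemma~1.21]{will} gives $\|w_n\|_r\to0$ and $\|z_n\|_s\to0$, whence \eqref{growth} implies $\int_{\mathbb R^N}F(\xi^{1/p}w_n)\,\mathrm{d}x\to0$ and $\int_{\mathbb R^N}G(\xi^{1/q}z_n)\,\mathrm{d}x\to0$ for every $\xi>0$. Finally, since $(u_n,v_n)\in\mathcal N$, the asymptotically periodic version of Lemma~\ref{neh} recorded in \eqref{ej19} yields $I(u_n,v_n)\ge I(t^{1/p}u_n,t^{1/q}v_n)$ for all $t\ge0$; choosing $t=\xi/K_n$, so that $t^{1/p}u_n=\xi^{1/p}w_n$ and $t^{1/q}v_n=\xi^{1/q}z_n$, and invoking Lemma~\ref{nehari-1} once more,
\[
c_{\mathcal N}+o_n(1)=I(u_n,v_n)\ge I(\xi^{1/p}w_n,\xi^{1/q}z_n)\ge\Big(\frac1q-\delta\max\Big\{\frac\alpha p,\frac\beta q\Big\}\Big)\xi+o_n(1).
\]
Letting $n\to\infty$ gives $c_{\mathcal N}\ge(\tfrac1q-\delta\max\{\alpha/p,\beta/q\})\xi$ for every $\xi>0$, which is absurd since $\tfrac1q-\delta\max\{\alpha/p,\beta/q\}>0$ by \ref{v6}. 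This contradiction shows that $(u_n,v_n)_n$ is bounded in $E$.
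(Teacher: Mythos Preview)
Your proof is correct and follows the same overall scheme as the paper (contradiction, normalization, Case~1 vs.\ Case~2, vanishing dichotomy, then the $\xi$-trick via \eqref{ej19}). The only difference is in the shift step when vanishing fails: the paper does not invoke \ref{v4} at all there, but simply uses that $a,b\in L^{\infty}(\mathbb{R}^{N})$ together with the embeddings $E_{a,p}\hookrightarrow W^{1,p}(\mathbb{R}^{N})$, $E_{b,q}\hookrightarrow W^{1,q}(\mathbb{R}^{N})$ to bound
\[
\|(\tilde w_n,\tilde z_n)\|\le \max\{1,\|a\|_\infty\}\|w_n\|_{W^{1,p}}^p+\max\{1,\|b\|_\infty\}\|z_n\|_{W^{1,q}}^q\le C,
\]
and then appeals to Case~1 directly (the $F,G$ integrals being translation invariant). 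Your route---transferring to $E_{\mathrm o}$ and $I_{\mathrm o}$ via the decay in \ref{v4} and the local convergence $w_n,z_n\to0$---is more elaborate but equally valid; it has the merit of making the comparison with the periodic functional explicit, at the cost of an extra $o_n(1)$ estimate that the paper avoids.
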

\begin{proof}
	The proof is similar to the proof of Proposition~\ref{p1} but for the sake of simplicity we give a sketch here. Arguing by contradiction we suppose that $\|(u_{n},v_{n})\|=\|u_{n}\|_{a,p}+\|v_{n}\|_{b,q}\rightarrow+\infty$, as $n\rightarrow+\infty$. We define $w_{n}=u_{n}/K_{n}^{1/p}$ and $z_{n}=v_{n}/K_{n}^{1/q}$, where $K_{n}:=\|u_{n}\|_{a,p}^{p}+\|v_{n}\|_{b,q}^{q}$. Thus, $(w_{n},z_{n})_{n}$ is bounded in $E$. We may assume up to a subsequence that $(w_{n},z_{n})\rightharpoonup(w_{0},z_{0})$ weakly in $E$. If $(w_{0},z_{0})\neq(0,0)$, then we get a contradiction as the same way to \textbf{Case 1} in Proposition~\ref{p1}. If $(w_{0},v_{0})=(0,0)$, then we claim that for any $R>0$ we have
	\begin{equation}\label{ej22}
	\lim_{n\rightarrow+\infty}\sup_{y\in\mathbb{R}^{N}}\int_{B_{R}(y)}(|w_{n}|^{p}+|z_{n}|^{q})\;\mathrm{d} x=0.
	\end{equation}
	If \eqref{ej22} does not hold, then there exist a sequence $(y_{n})_{n}\subset\mathbb{Z}^{N}$ and $R,\eta>0$ such that
	\begin{equation}\label{ej23}
	\lim_{n\rightarrow+\infty}\int_{B_{R}(y_{n})}(|w_{n}|^{p}+|z_{n}|^{q})\;\mathrm{d} x\geq\eta>0.
	\end{equation}
	We define the shift sequence $(\tilde{w}_{n}(x),\tilde{z}_{n}(x))=(w_{n}(x+y_{n}),z_{n}(x+y_{n}))$. Since $E_{a}\hookrightarrow W^{1,p}(\mathbb{R}^{N})$ and $E_{b}\hookrightarrow W^{1,q}(\mathbb{R}^{N})$, we deduce that
	\begin{eqnarray*}
		\|(\tilde{w}_{n},\tilde{z}_{n})\| & = & \int_{\mathbb{R}^{N}}\left(|\nabla \tilde{w}_{n}(x)|^{p}+a(x)|\tilde{w}_{n}(x)|^{p}\right)\;\mathrm{d} x+\int_{\mathbb{R}^{N}}\left(|\nabla \tilde{z}_{n}(x)|^{q}+b(x)|\tilde{z}_{n}(x)|^{q}\right)\;\mathrm{d} x\\
		& \leq & \max\{1,\|a\|_{\infty}\}\|w_{n}\|_{W^{1,p}(\mathbb{R}^{N})}^{p}+\max\{1,\|b\|_{\infty}\}\|z_{n}\|_{W^{1,p}(\mathbb{R}^{N})}^{q}\\
		& \leq & C\|(w_{n},z_{n})\|^{p} + C \|(w_{n},z_{n})\|^{q},
	\end{eqnarray*}
	which implies that $(\tilde{w}_{n},\tilde{z}_{n})_{n}$ is bounded in $E$ due the fact that $(w_{n}, z_{n})_{n}$ is bounded. Thus up to a subsequence that $(\tilde{w}_{n},\tilde{z}_{n})\rightharpoonup(\tilde{w}_{0},\tilde{z}_{0})$. By using \eqref{ej23} we conclude that $(\tilde{w}_{0},\tilde{z}_{0})\neq(0,0)$ and we get a contradiction as in \textbf{Case 1}. Therefore, \eqref{ej22} holds and the conclusion follows as in \textbf{Case 2} of Proposition~\ref{p1}.
\end{proof}

In view of the preceding Proposition, we may assume, up to a subsequence, that $(u_{n},v_{n})\rightharpoonup(u_{0},v_{0})$ weakly in $E$. By a standard density argument we can conclude that $(u_{0},v_{0})$ is a critical point of $I$. The main difficulty here is to prove that $(u_{0},v_{0})$ is a nontrivial solution, since we do not have the invariance by translations of the energy functional in this case.

\begin{proposition}
	The weak limit $(u_{0},v_{0})$ is nontrivial.
\end{proposition}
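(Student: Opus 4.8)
The plan is to argue by contradiction: assume $(u_{0},v_{0})=(0,0)$ and derive a contradiction with the strict inequality $c_{\mathcal{N}}<c_{\mathcal{N}_{\mathrm{o}}}$ of Lemma~\ref{rel}. First I would rule out vanishing of the minimizing sequence. If
\[
\lim_{n\rightarrow\infty}\sup_{y\in\mathbb{R}^{N}}\int_{B_{R}(y)}(|u_{n}|^{p}+|v_{n}|^{q})\;\mathrm{d} x=0
\]
for some (hence every) $R>0$, then by the vanishing lemma $\|u_{n}\|_{r},\|v_{n}\|_{s}\rightarrow0$, and inserting this into $\langle I'(u_{n},v_{n}),(\frac{1}{p}u_{n},\frac{1}{q}v_{n})\rangle=0$ together with \eqref{growth1} and Lemma~\ref{nehari-1} (as in Proposition~\ref{p2}) forces $\|(u_{n},v_{n})\|\rightarrow0$, which is impossible because $I(u_{n},v_{n})\rightarrow c_{\mathcal{N}}>0$ and $I$ is continuous. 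Hence there exist $(y_{n})_{n}\subset\mathbb{R}^{N}$ and $R,\eta>0$ with $\liminf_{n}\int_{B_{R}(y_{n})}(|u_{n}|^{p}+|v_{n}|^{q})\;\mathrm{d} x\geq\eta$. Since $u_{n}\rightarrow u_{0}=0$ in $L^{p}_{loc}$ and $v_{n}\rightarrow v_{0}=0$ in $L^{q}_{loc}$, the sequence $(y_{n})$ cannot have a bounded subsequence, so up to a subsequence $|y_{n}|\rightarrow\infty$; replacing each $y_{n}$ by a nearest point of $\mathbb{Z}^{N}$ we may assume $y_{n}\in\mathbb{Z}^{N}$.

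Next I would pass to the shifted sequence $(\tilde{u}_{n},\tilde{v}_{n})=(u_{n}(\cdot+y_{n}),v_{n}(\cdot+y_{n}))$. Since \ref{v5} gives continuous embeddings $E_{a,p}\hookrightarrow W^{1,p}(\mathbb{R}^{N})$, $E_{b,q}\hookrightarrow W^{1,q}(\mathbb{R}^{N})$ and $a_{\mathrm{o}},b_{\mathrm{o}}$ are bounded, $(\tilde{u}_{n},\tilde{v}_{n})$ is bounded in $E_{\mathrm{o}}$ (exactly as in Proposition~\ref{p6}), so up to a subsequence $(\tilde{u}_{n},\tilde{v}_{n})\rightharpoonup(\tilde{u}_{0},\tilde{v}_{0})$ in $E_{\mathrm{o}}$, with strong convergence in $L^{r}_{loc}\times L^{s}_{loc}$ and a.e.\ convergence; the concentration bound gives $\int_{B_{R}(0)}(|\tilde{u}_{0}|^{p}+|\tilde{v}_{0}|^{q})\;\mathrm{d} x\geq\eta$, hence $(\tilde{u}_{0},\tilde{v}_{0})\neq(0,0)$. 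The crucial (and hardest) step is to check that $(\tilde{u}_{0},\tilde{v}_{0})$ is a critical point of the \emph{periodic} functional $I_{\mathrm{o}}$, so that $(\tilde{u}_{0},\tilde{v}_{0})\in\mathcal{N}_{\mathrm{o}}$. For this I would use that the minimizing sequence may be taken to be a Palais--Smale sequence for $I$ at level $c_{\mathcal{N}}$ (the Nehari manifold being a natural constraint); writing $\langle I'(u_{n},v_{n}),(\phi(\cdot-y_{n}),\psi(\cdot-y_{n}))\rangle$ in the shifted variables yields the same expression as $\langle I'_{\mathrm{o}}(\tilde{u}_{n},\tilde{v}_{n}),(\phi,\psi)\rangle$ but with $a_{\mathrm{o}},b_{\mathrm{o}},\lambda_{\mathrm{o}}$ replaced by $a(\cdot+y_{n}),b(\cdot+y_{n}),\lambda(\cdot+y_{n})$, which by \ref{v4} and $\mathbb{Z}^{N}$-periodicity converge to $a_{\mathrm{o}},b_{\mathrm{o}},\lambda_{\mathrm{o}}$ uniformly on the (fixed, compact) support of $(\phi,\psi)$, while $\|(\phi(\cdot-y_{n}),\psi(\cdot-y_{n}))\|$ stays bounded. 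Hence $\langle I'_{\mathrm{o}}(\tilde{u}_{n},\tilde{v}_{n}),(\phi,\psi)\rangle\rightarrow0$ for all $(\phi,\psi)\in C_{0}^{\infty}(\mathbb{R}^{N})\times C_{0}^{\infty}(\mathbb{R}^{N})$, and by the standard arguments (including the local strong convergence of the gradients, as for quasilinear problems) the left-hand side also converges to $\langle I'_{\mathrm{o}}(\tilde{u}_{0},\tilde{v}_{0}),(\phi,\psi)\rangle$; density then gives $I'_{\mathrm{o}}(\tilde{u}_{0},\tilde{v}_{0})=0$.

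Finally I would compare energies. Since $(\tilde{u}_{0},\tilde{v}_{0})\in\mathcal{N}_{\mathrm{o}}\backslash\{(0,0)\}$, we have $c_{\mathcal{N}_{\mathrm{o}}}\leq I_{\mathrm{o}}(\tilde{u}_{0},\tilde{v}_{0})$. The combination $I_{\mathrm{o}}-\langle I'_{\mathrm{o}},(\frac{1}{p}\cdot,\frac{1}{q}\cdot)\rangle$ cancels both the principal terms and the coupling term (using $\alpha/p+\beta/q=1$), so
\[
I_{\mathrm{o}}(\tilde{u}_{0},\tilde{v}_{0})=\frac{1}{p}\int_{\mathbb{R}^{N}}(f(\tilde{u}_{0})\tilde{u}_{0}-pF(\tilde{u}_{0}))\;\mathrm{d} x+\frac{1}{q}\int_{\mathbb{R}^{N}}(g(\tilde{v}_{0})\tilde{v}_{0}-qG(\tilde{v}_{0}))\;\mathrm{d} x,
\]
and the same identity holds for $I$ and $(u_{n},v_{n})$. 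The integrands $f(t)t-pF(t)$, $g(t)t-qG(t)$ are nonnegative (remark after Lemma~\ref{gs}) and the corresponding integrals are translation invariant, so Fatou's Lemma and the a.e.\ convergence give
\[
I_{\mathrm{o}}(\tilde{u}_{0},\tilde{v}_{0})\leq\liminf_{n}\Big[I(u_{n},v_{n})-\big\langle I'(u_{n},v_{n}),\big(\tfrac{1}{p}u_{n},\tfrac{1}{q}v_{n}\big)\big\rangle\Big]=\liminf_{n}I(u_{n},v_{n})=c_{\mathcal{N}},
\]
since $\langle I'(u_{n},v_{n}),(\frac{1}{p}u_{n},\frac{1}{q}v_{n})\rangle=0$ because $(u_{n},v_{n})\in\mathcal{N}$. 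Thus $c_{\mathcal{N}_{\mathrm{o}}}\leq c_{\mathcal{N}}$, contradicting Lemma~\ref{rel}; therefore $(u_{0},v_{0})\neq(0,0)$. As the paper itself stresses, the genuine difficulty is the loss of translation invariance of $I$: one must ensure that the translated weak limit $(\tilde{u}_{0},\tilde{v}_{0})$ is a critical point of the limit functional $I_{\mathrm{o}}$ rather than of some drifting functional, which is exactly what the uniform-on-compacts convergence in \ref{v4} together with the Palais--Smale property delivers.
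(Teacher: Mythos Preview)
Your argument is correct, but it follows a genuinely different route from the paper's. The paper never shows that the translated weak limit $(\tilde u_0,\tilde v_0)$ is a critical point of $I_{\mathrm{o}}$. Instead, it first uses the assumption $(u_0,v_0)=0$, assumption~\ref{v4}, and the local convergence $u_n\to 0$, $v_n\to 0$ to prove directly that
\[
I_{\mathrm{o}}(u_n,v_n)=c_{\mathcal{N}}+o_n(1),\qquad \left\langle I_{\mathrm{o}}'(u_n,v_n),\left(\tfrac{1}{p}u_n,\tfrac{1}{q}v_n\right)\right\rangle=o_n(1).
\]
It then projects $(u_n,v_n)$ onto $\mathcal{N}_{\mathrm{o}}$ via the scaling $(t_n^{1/p}u_n,t_n^{1/q}v_n)$ of Lemma~\ref{neh}, and establishes through two claims (using the monotonicity coming from \ref{f2} and Lemma~\ref{gs}) that $t_n\to 1$; this yields $c_{\mathcal{N}_{\mathrm{o}}}\leq I_{\mathrm{o}}(t_n^{1/p}u_n,t_n^{1/q}v_n)=I_{\mathrm{o}}(u_n,v_n)+o_n(1)=c_{\mathcal{N}}+o_n(1)$, contradicting Lemma~\ref{rel}. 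The paper's approach thus uses only the Nehari constraint $\langle I'(u_n,v_n),(\tfrac{1}{p}u_n,\tfrac{1}{q}v_n)\rangle=0$, never the full Palais--Smale information $I'(u_n,v_n)\to 0$ in $E^*$, and avoids the a.e.\ convergence of gradients needed to pass to the limit in the quasilinear terms. Your approach pays that price (Ekeland on $\mathcal{N}$, the natural-constraint argument for the Lagrange multiplier, Boccardo--Murat-type convergence of $\nabla\tilde u_n$) but in exchange produces a transparent competitor: an actual solution $(\tilde u_0,\tilde v_0)$ of the periodic limit problem whose energy is compared to $c_{\mathcal{N}}$ via Fatou. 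Both arrive at the same inequality $c_{\mathcal{N}_{\mathrm{o}}}\leq c_{\mathcal{N}}$; the paper's route is more elementary and tailored to the fibering structure, while yours is closer in spirit to a splitting/concentration-compactness argument and would adapt more readily to problems where the Nehari scaling is unavailable.
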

\begin{proof}
	We suppose by contradiction that $(u_{0},v_{0})=(0,0)$. Thus, we have
	\begin{itemize}
		\item $u_{n}\rightarrow u_{0}$ strongly in $L^{r}_{loc}(\mathbb{R}^{N})$, for all $p\leq r<p^{*}$;
		\item $v_{n}\rightarrow v_{0}$ strongly in $L^{s}_{loc}(\mathbb{R}^{N})$, for all $q\leq s<q^{*}$;
		\item $u_{n}(x)\rightarrow u_{0}(x)$ and $v_{n}(x)\rightarrow v_{0}(x)$, almost everywhere in $\mathbb{R}^{N}$.
	\end{itemize}
	It follows by assumption \ref{v4} that for any $\varepsilon>0$ there exists $R>0$ such that
	\begin{equation}\label{ej24}
	|a_{\mathrm{o}}(x)-a(x)|<\varepsilon, \quad |b_{\mathrm{o}}(x)-b(x)|<\varepsilon, \quad |\lambda(x)-\lambda_{\mathrm{o}}(x)|<\varepsilon, \quad \mbox{for all} \hspace{0,2cm} x\in B_{R}(0)^{c}.
	\end{equation}
	Using \eqref{ej24} and the local convergence we deduce that
	\begin{eqnarray}
	\left|\int_{\mathbb{R}^{N}}(a_{\mathrm{o}}(x)-a(x))|u_{n}|^{p}\;\mathrm{d} x\right| & \leq & \int_{B_{R}(0)}|a_{\mathrm{o}}(x)-a(x)||u_{n}|^{p}\;\mathrm{d} x+C\varepsilon\int_{B_{R}(0)^{c}}|u_{n}|^{p}\;\mathrm{d} x\nonumber\\
	& \leq & (\|a_{\mathrm{o}}\|_{\infty}+\|a\|_{\infty})\varepsilon+C\varepsilon,\label{ej25}
	\end{eqnarray}	
	for all $n\geq n_{0}$. Analogously we get
	\begin{equation}\label{ej26}
	\left|\int_{\mathbb{R}^{N}}(b_{\mathrm{o}}(x)-b(x))|v_{n}|^{q}\;\mathrm{d} x\right|\leq (\|b_{\mathrm{o}}\|_{\infty}+\|b\|_{\infty})\varepsilon+C\varepsilon.
	\end{equation}
	Moreover, using H\"{o}lder inequality with $\alpha/p+\beta/q=1$ we deduce that
	\begin{equation}\label{ej28}
	\left|\int_{\mathbb{R}^{N}}(\lambda(x)-\lambda_{\mathrm{o}}(x))|u_{n}|^{\alpha}|v_{n}|^{\beta}\;\mathrm{d} x\right|\leq (\|\lambda\|_{\infty}+\|\lambda_{\mathrm{o}}\|_{\infty})\varepsilon+C\varepsilon.
	\end{equation}
	Combining \eqref{ej25}, \eqref{ej26} and \eqref{ej28} we conclude that
	\[
	I_{\mathrm{o}}(u_{n},v_{n})-I(u_{n},v_{n})=o_{n}(1) \quad \mbox{and} \quad \left\langle I_{\mathrm{o}}'(u_{n},v_{n})-I'(u_{n},v_{n}),\left(\frac{1}{p}u_{n},\frac{1}{q}v_{n}\right)\right\rangle=o_{n}(1),
	\]
	which jointly with \eqref{ej21} implies that
	\begin{equation}\label{ej29}
	I_{\mathrm{o}}(u_{n},v_{n})= c_{\mathcal{N}}+o_{n}(1) \quad \mbox{and} \quad \left\langle I_{\mathrm{o}}'(u_{n},v_{n}),\left(\frac{1}{p}u_{n},\frac{1}{q}v_{n}\right)\right\rangle=o_{n}(1).
	\end{equation}
	In light of Lemma~\ref{nehari-1} we get a sequence $(t_{n})_{n}\subset(0,+\infty)$ such that $(t_{n}^{1/p}u_{n},t_{n}^{1/q}v_{n})_{n}\subset\mathcal{N}_{\mathrm{o}}$.
	
	\vspace{0,3cm}
	
	\noindent \textit{Claim 1.} $\limsup_{n\rightarrow+\infty}t_{n}\leq 1$.
	
	\vspace{0,3cm}
	
	We suppose by contradiction that the claim does not hold, that is, there exists $\varepsilon_{0}>0$ such that, up to a subsequence, we have $t_{n}\geq 1+\varepsilon_{0}$, for all $n\in\mathbb{N}$. By using \eqref{ej29} and the fact that $(t_{n}^{1/p}u_{n},t_{n}^{1/q}v_{n})_{n}\subset\mathcal{N}_{\mathrm{o}}$ we obtain
	\[
	\frac{1}{p}\int_{\mathbb{R}^{N}}\left(\frac{f(t_{n}^{1/p}u_{n})}{t_{n}^{1-\frac{1}{p}}}u_{n}-f(u_{n})u_{n}\right)\;\mathrm{d} x+\frac{1}{q}\int_{\mathbb{R}^{N}}\left(\frac{g(t_{n}^{1/q}v_{n})}{t_{n}^{1-\frac{1}{q}}}v_{n}-g(v_{n})v_{n}\right)\;\mathrm{d} x=o_{n}(1).
	\]
	Since $t_{n}\geq 1+\varepsilon_{0}$, it follows from \eqref{ej30} and \eqref{ej31} that
	\[
	\frac{1}{p}\int_{\mathbb{R}^{N}}\left(\frac{f((1+\varepsilon_{0})^{1/p}u_{n})}{(1+\varepsilon_{0})^{1-\frac{1}{p}}}u_{n}-f(u_{n})u_{n}\right)+\frac{1}{q}\int_{\mathbb{R}^{N}}\left(\frac{g((1+\varepsilon_{0})^{1/q}v_{n})}{(1+\varepsilon_{0})^{1-\frac{1}{q}}}v_{n}-g(v_{n})v_{n}\right)\leq o_{n}(1).
	\]
	Arguing as in Proposition~\ref{p6}, we introduce the sequence $(\tilde{u}_{n}(x),\tilde{v}_{n}(x))=(u_{n}(x+y_{n}),v_{n}(x+y_{n}))$, which is bounded in $E$ and, up to a subsequence, $(\tilde{u}_{n},\tilde{v}_{n})\rightharpoonup (\tilde{u}_{0},\tilde{v}_{0})$ weakly in $E$. Moreover, $(\tilde{u}_{0},\tilde{v}_{0})\neq(0,0)$. Thus, using \eqref{ej30}, \eqref{ej31} and Fatou's Lemma we get
	\[
	0<\frac{1}{p}\int_{\mathbb{R}^{N}}\left(\frac{f((1+\varepsilon_{0})^{1/p}u_{0})}{(1+\varepsilon_{0})^{1-\frac{1}{p}}}u_{0}-f(u_{0})u_{0}\right)+\frac{1}{q}\int_{\mathbb{R}^{N}}\left(\frac{g((1+\varepsilon_{0})^{1/q}v_{0})}{(1+\varepsilon_{0})^{1-\frac{1}{q}}}v_{0}-g(v_{0})v_{0}\right)\leq o_{n}(1),
	\]
	which is not possible and finishes the proof of \textit{Claim 1}.
	
	\vspace{0,3cm}
	
	\noindent \textit{Claim 2.} There exists $n_{0}\in\mathbb{N}$ such that $t_{n}\geq1$, for all $n\geq n_{0}$.
	
	\vspace{0,3cm}
	
	We suppose by contradiction that $t_{n}<1$ for all $n\in\mathbb{N}$. Thus, $t_{n}^{1/p}\leq t_{n}^{1/q}\leq 1$. Hence, using Lemma~\ref{gs} and the fact that $(t_{n}^{1/p}u_{n},t_{n}^{1/q}v_{n})_{n}\subset\mathcal{N}_{\mathrm{o}}$ we obtain
	\begin{eqnarray*}
		c_{\mathcal{N}_{\mathrm{o}}} & \leq & \frac{1}{p}\int_{\mathbb{R}^{N}}(f(t_{n}^{1/p}u_{n})t_{n}^{1/p}u_{n}-pF(t_{n}^{1/p}u_{n}))\;\mathrm{d} x+\frac{1}{q}\int_{\mathbb{R}^{N}}(g(t_{n}^{1/q}v_{n})t_{n}^{1/q}v_{n}-qG(t_{n}^{1/q}v_{n}))\;\mathrm{d} x\\
		& \leq & \frac{1}{p}\int_{\mathbb{R}^{N}}(f(u_{n})u_{n}-pF(u_{n}))\;\mathrm{d} x+\frac{1}{q}\int_{\mathbb{R}^{N}}(g(v_{n})v_{n}-qG(v_{n}))\;\mathrm{d} x\\
		& = & c_{\mathcal{N}}+o_{n}(1),
	\end{eqnarray*}
	which implies that $c_{\mathcal{N}_{\mathrm{o}}}\leq c_{\mathcal{N}}$ and contradicts Lemma~\ref{rel}.
	
	By using \textit{Claims} $1$ and $2$ we can deduce that
	\begin{equation}\label{ej32}
	\int_{\mathbb{R}^{N}}(F(t_{n}^{1/p}u_{n})-F(u_{n}))\;\mathrm{d} x=\int_{1}^{t_{n}^{1/p}}\int_{\mathbb{R}^{N}}f(\tau u_{n})u_{n}\;\mathrm{d} x=o_{n}(1),
	\end{equation}
	\begin{equation}\label{ej33}
	\int_{\mathbb{R}^{N}}(G(t_{n}^{1/q}v_{n})-G(v_{n}))\;\mathrm{d} x=\int_{1}^{t_{n}^{1/q}}\int_{\mathbb{R}^{N}}g(\tau v_{n})v_{n}\;\mathrm{d} x=o_{n}(1).
	\end{equation}
	Moreover, since $a_{\mathrm{o}},b_{\mathrm{o}}\in L^{\infty}(\mathbb{R}^{N})$ and $(u_{n},v_{n})_{n}$ is bounded in $E_{\mathrm{o}}$ we also have
	\begin{equation}\label{ej34}
	(t_{n}-1)\left(\frac{1}{p}\|u_{n}\|_{a_{\mathrm{o}},p}^{p}+\frac{1}{q}\|v_{n}\|_{b_{\mathrm{o}},q}^{q}- \int_{\mathbb{R}^{N}}\lambda_{\mathrm{o}}(x)|u_{n}|^{\alpha}|v_{n}|^{\beta}\;\mathrm{d} x\right)=o_{n}(1).
	\end{equation}
	Combining \eqref{ej32}, \eqref{ej33} and \eqref{ej34} we conclude that
	\[
	I_{\mathrm{o}}(t_{n}^{1/p}u_{n},t_{n}^{1/q}v_{n})-I_{\mathrm{o}}(u_{n},v_{n})=o_{n}(1).
	\]
	Thus, in view of \eqref{ej29} we get
	\[
	c_{\mathcal{N}_{\mathrm{o}}}\leq I_{\mathrm{o}}(t_{n}^{1/p}u_{n},t_{n}^{1/q}v_{n})=I_{\mathrm{o}}(u_{n},v_{n})+o_{n}(1)=c_{\mathcal{N}}+o_{n}(1),
	\]
	which contradicts Lemma~\ref{rel}. Therefore, $(u_{0},v_{0})\neq(0,0)$.
\end{proof}

\begin{proof}[Proof of Theorem~\ref{B} completed.]
	Since $(u_{0},v_{0})$ is a nontrivial critical point of $I$, we have that $(u_{0},v_{0})\in\mathcal{N}$. Hence, $c_{\mathcal{N}}\leq I(u_{0},v_{0})$. On the other hand, it follows from \eqref{ej21} and Fatou's Lemma that
	\begin{eqnarray*}
		c_{\mathcal{N}}+o_{n}(1) & = & \frac{1}{p}\int_{\mathbb{R}^{N}}(f(u_{n})u_{n}-pF(u_{n}))\;\mathrm{d} x+\frac{1}{q}\int_{\mathbb{R}^{N}}(g(v_{n})v_{n}-qG(v_{n}))\;\mathrm{d} x\\
		& \geq & \frac{1}{p}\int_{\mathbb{R}^{N}}(f(u_{0})u_{0}-pF(u_{0}))\;\mathrm{d} x+\frac{1}{q}\int_{\mathbb{R}^{N}}(g(v_{0})v_{0}-qG(v_{0}))\;\mathrm{d} x+o_{n}(1)\\
		& = & I(u_{0},v_{0})+o_{n}(1),
	\end{eqnarray*}
	which implies that $c_{\mathcal{N}}\geq I(u_{0},v_{0})$. Therefore, $(u_{0},v_{0})$ is a ground state for System~\eqref{ej00}. By a similar argument used in Propositions~\ref{p4}, \ref{p7} and \ref{p5}, we obtain $t_{0}>0$ such that $(t_{0}^{1/p}|u_{0}|,t_{0}^{1/q}|v_{0}|)\in\mathcal{N}$ is a positive ground state solution for System~\eqref{ej00}, for some $\lambda>0$.
\end{proof}


\begin{acknowledgement}
	Research supported in part by INCTmat/MCT/Brazil, CNPq and CAPES/Brazil. The second author was also partially supported by Fapego Fapeg/CNpq grants 03/2015-PPP.
\end{acknowledgement}


\bigskip
\medskip

\bigskip

\end{document}